\newcommand\comment[1]{}
\newcommand\classification[2][]{%
  \gdef\@classification{%
    \href{http://www.ams.org/msc/}%
{\textit{2000 Mathematics Subject Classification}} \ignorespaces#2\unskip}}
\def\NN{{\mathbb N}}
\renewcommand{\AA}{{\mathbb A}}
\def\C{{\mathcal C}}
\def\H{{\mathcal H}}
\def\G{{\mathcal G}}
\def\N{{\mathcal N}}
\def\U{{\mathcal U}}
\def\X{{\mathcal X}}
\def\O{{\mathcal O}}
\def\fT{{\mathfrak T}}
\def\fN{{\mathfrak N}}
\newcommand{\p}{{\mathfrak{p}}}
\newcommand{\q}{{\mathfrak{q}}}
\newcommand{\m}{{\mathfrak{m}}}
\def\eps{\varepsilon}
\def\id{{\rm id}}
\newcommand{\ev}{{\rm ev}}
\def\Mod{{\bf Mod}}
\newcommand{\vect}{\mathsf{vect}}
\newcommand{\Vect}{\mathsf{Vect}}
\newcommand{\alg}{\mathsf{Alg}}
\newcommand{\sets}{\mathsf{Sets}}
\newcommand{\groups}{\mathsf{Groups}}
\renewcommand{\mod}{\mathsf{Mod}}
\newcommand{\comod}{\mathsf{comod}}
\newcommand{\Comod}{\mathsf{Comod}}
\newcommand{\tenscat}[1]{\langle\!\langle#1\rangle\!\rangle}
\newcommand{\Qcoh}{\mathsf{Qcoh}}
\renewcommand{\1}{\mathds{1}}
\newcommand{\negotimes}{\!\!\otimes\!\!} 
\newcommand{\negvee}{{\!\vee}} 
\newcommand{\tp}[1]{{}^t\! #1}
\newcommand{\otimesR}{\otimes_{\!R}} 
\newcommand{\wtilde}{\widetilde}
\def\isom{\cong}
\DeclareMathOperator{\im}{Im}
\DeclareMathOperator{\Ker}{Ker}
\DeclareMathOperator{\Mor}{Mor}
\DeclareMathOperator{\Hom}{Hom}
\DeclareMathOperator{\Aut}{Aut}
\DeclareMathOperator{\End}{End}
\DeclareMathOperator{\Gal}{Gal}
\DeclareMathOperator{\Quot}{Frac}
\DeclareMathOperator{\spec}{Spec}
\DeclareMathOperator{\Sym}{Sym}
\DeclareMathOperator{\Rep}{Rep}
\def\uGal{\underline{\Gal}}
\def\markdef{\bf }
\theoremstyle{plain}
\newtheorem{thm}{Theorem}[section]
\newtheorem{cor}[thm]{Corollary}
\newtheorem{lemma}[thm]{Lemma}
\newtheorem{prop}[thm]{Proposition}
\theoremstyle{definition}
\newtheorem{defn}[thm]{Definition}
\newtheorem{exmp}[thm]{Example}
\newtheorem{rem}[thm]{Remark}
\newtheoremstyle{Acknowledgements}
  {}
    {}
     {}
     {}
    {\bfseries}
    {}
     {.5em}
     {\thmname{#1}\thmnumber{ }\thmnote{ (#3)}}
\theoremstyle{Acknowledgements}
\newtheorem{ack}{Acknowledgements.}
\begin{document}

\title[Categorical PV-theory]{A categorical approach to Picard-Vessiot theory}
\author{Andreas Maurischat}
\address{\rm {\bf Andreas Maurischat}, Lehrstuhl A f\"ur Mathematik, RWTH Aachen University, Germany }
\email{\sf andreas.maurischat@matha.rwth-aachen.de}

\classification{}

\keywords{Tannakian categories, Picard-Vessiot theory, Galois theory}
\date{\today}

\begin{abstract}
Picard-Vessiot rings are present in many settings like differential Galois theory, difference Galois theory and Galois theory of Artinian simple module algebras. In this article we set up an abstract framework in which we can prove theorems on existence and uniqueness of Picard-Vessiot rings, as well as on Galois groups corresponding to the Picard-Vessiot rings.\\
As the present approach restricts to the categorical properties which all the categories of differential modules resp.~difference modules etc.~share, it gives unified proofs for all these Galois theories (and maybe more general ones).

%
\end{abstract}

\maketitle

\section{Introduction}

Since the foundation of Picard-Vessiot theory as a Galois theory for linear differential equations (cf.~\cite{ep:ta-tome3}), many analogues have evolved. For example, Picard-Vessiot theory for difference equations \cite{mvdp-mfs:gtde}, for iterative differential equations \cite{bhm-mvdp:ideac}, for $C$-ferential fields \cite{mt:haapvt}, for Artinian simple module algebras \cite{ka-am:pveasma} and others.\\
In all these theories the base ring is a commutative ring with some operators acting on it, and the main objects are modules over that ring with the same operators acting.\\
The setting of Artinian simple module algebras generalises the setting of (iterative) differential fields as well as that of inversive difference pseudo-fields (i.e. simple difference rings which are a product of fields), but it does not generalise the difference setting where the given endomorphism is not bijective as in \cite{mw:gdgt}. Y.~Andr\'e in \cite{ya:dnctgdd}  already gave a setting which unifies the case of difference pseudo-fields and differential fields in characteristic zero, however, it doesn't contain the Picard-Vessiot theory for differentially simple rings given in \cite{am:pvtdsr}.\\
One could go further and generalise the operators even more or loosen the conditions on the base ring. However, there might still be cases not covered by such generalisations.\\
The present approach therefore restricts to the categorical properties which all the categories of differential modules resp.~difference modules etc.~share, and hence gives unified proofs for all these Picard-Vessiot theories (and more general ones).

The main results of this paper are the construction of a universal solution ring for a given ``module'' $M$ such that all Picard-Vessiot rings (PV-rings) for $M$ are quotients of this ring (Thm.~\ref{thm:exists-sol-ring} and Thm.~\ref{thm:simple-minimal-solution-rings-are-quotients}), the existence of PV-rings up to a finite extension of constants (Thm.~\ref{thm:existence-of-pv-ring}), and uniqueness of PV-rings inside a given simple solution ring with same constants (Prop.~\ref{prop:unique-pv-inside-simple-sol-ring}).
Furthermore, we prove a correspondence between isomorphism classes of fibre functors $\omega:\tenscat{M}\to \vect_{\tilde{k}}$ and isomorphism classes of PV-rings $R$ for $M\otimes_k \tilde{k}$, where $k$ is the field of constants of the base ring $S$ and $\tilde{k}$ is any finite extension of $k$ (Thm.~\ref{thm:pv-rings-equiv-to-fibre-functors}).
We also prove that the group scheme of automorphisms $\underline{\Aut}^\partial(R/S)$ of $R$ over $S$ that commute with the extra structure, is isomorphic to the affine group scheme of automorphisms $\underline{\Aut}^\otimes (\omega)$ of the corresponding fibre functor $\omega$ (Cor.~\ref{cor:auts-are-isomorphic}). These two statements are direct generalisations of the corresponding facts given for example in \cite[Ch.~9]{pd:ct} or \cite[Sect.~3.4 and 3.5]{ya:dnctgdd}.\\
Finally, we give a Galois correspondence between closed normal subgroup schemes of the Galois group scheme and subalgebras of the PV-ring which are PV-rings for some other ``module''.

\medskip

At this point we should mention that the setup of this article does not cover the parametrized Picard-Vessiot theories where the constants are equipped with an additional differential or difference operator as given for example in \cite{pjc-mfs:gtpdeldag}, \cite{ldv-ch-mw:dgtlde}, \cite{ch-mfs:dgtlde}.

\medskip

\paragraph{\bf Differential setting}
We now recall the main properties of the differential setting for having a better comparison with its analogs in the abstract setting.

Classically, one starts with some differential field $(F,\partial)$ of characteristic zero, and its field of differentially constant elements $k:=F^\partial=\{x\in F\mid \partial(x)=0\}$.

The basic objects are differential modules ($\partial$-modules) $(M,\partial_M)$, i.e.~$F$-vector spaces $M$ with a derivation $\partial_M:M\to M$. Morphisms of $\partial$-modules (called \textit{differential homomorphisms}) are homomorphisms $f:M\to N$ of the underlying $F$-vector spaces which are compatible with the derivations, i.e.~satisfy $f\circ \partial_M=\partial_N\circ f$. This implies that kernels and cokernels of $\partial$-homomorphisms are again $\partial$-modules, turning the category of $\partial$-modules over $(F,\partial)$ into an abelian category.\\
For $\partial$-modules $(M,\partial_M)$ and $(N,\partial_N)$ the tensor product $M\otimes_F N$ is naturally equipped with a derivation given by
$\partial(m\otimes n):=\partial_M(m)\otimes n+m\otimes \partial_N(n)$. This provides the category of $\partial$-modules with the structure of a symmetric monoidal category with unit object $\1$ given by the differential field $(F,\partial)$. 
Furthermore, for every $\partial$-module $(M,\partial_M)$ that is finitely generated as an $F$-vector space the dual vector space $M^\vee$ carries a differential structure $\partial_{M^\vee}$ such that the natural homomorphisms of evaluation $\ev:M\otimes M^\vee\to F$ and coevaluation $\delta:F\to M^\vee\otimes M$ are $\partial$-homomorphisms. This means that $(M^\vee, \partial_{M^\vee})$ is a dual of $(M,\partial_M)$ in the category of $\partial$-modules.\\
As we consider all $\partial$-modules -- and not only those which are finitely generated as $F$-vector spaces -- this category is even closed under inductive limits. This is due to the fact that for a directed system $(M_i,\partial_i)_{i\in I}$ of differential modules, the inductive limit
$\varinjlim_{i\in I} M_i$ of $F$-vector spaces can be equipped uniquely with a derivation compatible with the homomorphisms 
$M_i\to \varinjlim_{i\in I} M_i$.

The differential constants of a $\partial$-module  $(M,\partial_M)$ are given as
$M^\partial:=\{m\in M\mid \partial_M(m)=0\}$. This is a $k$-vector space of dimension at most $\dim_F(M)$. Therefore, one is interested in differential field extensions of $F$ over which the corresponding dimensions are the same. From the view of linear differential equations this means that the differential field extension contains a full set of solutions.

We assume now that the field of constants $k$ is algebraically closed. 
A Picard-Vessiot extension of $F$ for a $\partial$-module  $(M,\partial_M)$ with $\dim_F(M)<\infty$ is defined to be a minimal differential field extension $(E,\partial_E)$ of $F$ such that $\dim_k((E\otimes_F M)^\partial)=\dim_E(E\otimes_F M)=\dim_F(M)$. A main theorem states that a Picard-Vessiot extension always exists and is unique up to differential isomorphism.\\
The differential Galois group $\Gal(E/F)$ of a Picard-Vessiot extension $E/F$ is then defined to be the group $\Aut^\partial(E/F)$ of differential automorphisms of $E$ fixing $F$. It has the structure of ($k$-rational points of) a  linear algebraic group over $k$, and one obtains a Galois correspondence between the Zariski-closed subgroups of $\Gal(E/F)$ and differential subfields of $E$ containing $F$.\\
A main role is played by the Picard-Vessiot ring $R$ in $E$. It is the subring of $E$ which is generated as an $F$-algebra by the entries of a fundamental solution matrix and its inverse\footnote{A fundamental solution matrix is a base change matrix over $E$ mapping an $F$-basis of $M$ to a $k$-basis of $(E\otimes_F M)^\partial$, both bases seen as $E$-bases of $E\otimes_F M$.}. $R$ is a $\partial$-simple $\partial$-ring extension of $F$ minimal with the property that $R\otimes_F M$ has a basis of constant elements. Here, $\partial$-simple means that $R$ has no nontrivial ideals stable under the derivation. Furthermore, $E$ is the field of fractions of $R$, and $\Aut^\partial(R/F)=\Aut^\partial(E/F)$. Moreover, the spectrum $\spec(R)$ is a torsor of $\Gal(E/F)$ over $F$.
The Galois correspondence is more or less a consequence of this torsor property, as the subfield $E^\H$ corresponding to a closed subgroup $\H\leq \Gal(E/F)$ is nothing else than the field of rational functions on the scheme $\spec(R)/\H$.

If the field of constants $k$ is not algebraically closed (cf.~\cite{td:tipdgtfrz} and \cite{am:gticngg}), some things become more envolved. First at all, one also requires that a Picard-Vessiot field $E$ has the same field of constants $k$ -- a condition which is automatically fulfilled if $k$ is algebraically closed.
Furthermore, the Galois group has to be replaced by a representable group functor $\uGal(E/F)$, i.e.~an affine group scheme, 
whose group of $k$-rational points is $\Aut^\partial(E/F)$. Then as above, $\spec(R)$ is a $\uGal(E/F)$-torsor over $F$ and one obtains a Galois correpondence between closed subgroups of $\uGal(E/F)$ and differential subfields of $E$ containing $F$.
However, since the constants are not algebraically closed, existence of a Picard-Vessiot field or a Picard-Vessiot ring is not guaranteed, and also uniqueness might fail. Furthermore, assume one is given a PV-field $E$, the Galois group scheme does not act algebraically on the PV-field but only on the PV-ring. On the other hand, one does not get a full Galois correspondence on the ring level. The geometric reason is that for a closed subgroup $\H\leq \uGal(E/F)$ the invariant ring $R^\H$ is the ring of global sections of the orbit space $\spec(R)/\H$. If the latter is not affine, $R^\H$ becomes ``too small''.\\
On the ring level, at least one has a restricted Galois correspondence between closed normal subgroups of $\uGal(E/F)$ and differential subrings of $R$ containing $F$ which are Picard-Vessiot rings for some $\partial$-module (cf.~\cite{am:pvtdsr}). 

In the abstract setting of this article, we will stay on the ring level, since the action of the Galois group is naturally algebraic there.

\medskip

\paragraph{\bf Iterative differential and difference setting}

In iterative differential Galois theory in arbitrary characteristic derivations are replaced by so called iterative derivations (cf.~\cite{bhm-mvdp:ideac}). These are a collection $\theta=\left( \theta^{(n)}\right)_{n\in\NN}$ of additive maps satisfying $\theta^{(0)}=\id$, $\theta^{(n)}(ab)=\sum_{i+j=n}\theta^{(i)}(a)\theta^{(j)}(b)$ as well as $\theta^{(n+m)}=\binom{n+m}{n}\theta^{(n)}\circ \theta^{(m)}$ for all $n,m\in \NN$. This means, $\partial:=\theta^{(1)}$ is a derivation and $\theta^{(n)}$ resembles $\frac{1}{n!}\partial^n$ -- the $n$-th iterate of $\partial$ devided by $n$-factorial. 
Indeed, in characteristic zero, the iterative derivations are determined by the derivation $\partial=\theta^{(1)}$ via $\theta^{(n)}=\frac{1}{n!}\partial^n$. In particular the differential setting in characteristic zero is a special case of the iterative differential setting.
The constants of an iterative differential field $(F,\theta)$ are given by 
$F^\theta:=\{x\in F\mid \theta^{(n)}(x)=0 \, \forall n\geq 1 \}$. The basic objects are iterative differential modules $(M,\theta_M)$, and one is interested in minimal iterative differential extensions $E$ of $F$ (with same constants) such that 
$\dim_{F^\theta}\left( (E\otimes_F M)^\theta \right)=\dim_F(M)$. All the things about Picard-Vessiot rings and fields turn out the same as in the differential setting. However, even in the case that $k=F^\theta$ is algebraically closed, one has to consider the Galois group as an affine group scheme which might be nonreduced (if $E/F$ is not separable) (cf.~ \cite{am:igsidgg}, \cite{am:gticngg}).

\smallskip

In difference Galois theory derivations are replaced by automorphisms and constants by invariants, i.e.~one starts with some field $F$ together with an automorphism $\sigma:F\to F$ and its field of invariant elements $k:=F^\sigma:=\{ x\in F\mid \sigma(x)=x\}$. The basic objects are difference modules $(M,\sigma_M)$, i.e.~$F$-vector spaces $M$ together with a $\sigma$-linear automorphism $\sigma_M:M\to M$. Again, the set of invariants
$M^\sigma:=\{ m\in M\mid \sigma_M(m)=m\}$ is a $k$-vector space of dimension at most $\dim_F(M)$, and one is interested in a difference extension of $F$ over which the corresponding dimensions are the same. In this setting another aspect appears, since in some situations every solution ring has zerodivisors. Hence even if $k$ is algebraically closed, there does not exist a Picard-Vessiot {\bf field} in general.
Nevertheless, if $k$  is algebraically closed, there always exists a Picard-Vessiot ring $R$ over $F$, i.e.~a $\sigma$-simple $\sigma$-ring extension $R$ of $F$ minimal with the property that $R\otimes_F M$ has a basis of invariant elements,
and instead of the Picard-Vessiot field one considers $E=\Quot(R)$, the total ring of fractions of $R$.
With these definitions one again obtains a Galois group scheme $\uGal(R/F)$ as a representable functor whose $k$-rational points are exactly $\Aut^\sigma(R/F)=\Aut^\sigma(E/F)$, as well as a Galois correspondence between closed subgroup schemes of $\uGal(R/F)$ and total difference subrings of $E$ containing $F$.

\medskip

\paragraph{\bf Other settings}

The three basic settings described above have been generalised in various ways. First at all, the operators acting have become more general: Takeuchi in \cite{mt:haapvt} considered an action of a pointed irreducible cocommutative coalgebra $C$ on the base field $F$ (which he then calls a $C$-ferential field). This amounts to having a collection of several commuting higher derivations.
Later Amano-Masuoka in \cite{ka-am:pveasma} have considered an action of a pointed cocommutative Hopf-algebra $D$ on the base field $F$ (then called $D$-module algebra), though generalising to a collection of commuting iterative derivations and automorphisms.
Andr\'e in \cite{ya:dnctgdd} used so called noncommutative differentials in characteristic $0$ resembling a collection of derivations and endomorphisms.

On the other hand, also the bases have become more general: the base field $F$ has been generalised to (i) an Artinian algebra (i.e. finite product of fields) which is simple as $D$-module algebra in \cite{ka-am:pveasma}, (ii) a Noetherian ring which is simple with respect to the differentials in  \cite{ya:dnctgdd}, and (iii) any differentially simple (iterative) differential ring in \cite{am:pvtdsr}.

In \cite[Ch.~2]{nk:esde}, N.~Katz even considers schemes $\X$ of finite type over $k$, and obtains Picard-Vessiot extensions for finitely generated $\O_\X$-modules with integrable connections.

\medskip

All these settings have in common that you start with a base ring (or even base scheme) $F$ with some extra structure such that no non-trivial ideal of $F$ is respected by the extra structure, i.e.~that $F$ is simple. The basic objects for which one considers Picard-Vessiot rings are finitely generated modules over $F$ with corresponding extra structure having a dual in the category
of modules with extra structure, and the Picard-Vessiot rings are algebra objects in the category of (all) modules with extra structure.

\medskip

\paragraph{\bf Abstract setting}

In the abstract setting this is reflected by the following basic setup:
\begin{enumerate}
\item[(C1)]  $\C$ is an abelian symmetric monoidal category with unit object $\1\in \C$. We assume that $\1$ is a simple object in $\C$.
\item[(C2)] $\C$ is cocomplete, i.e.~$\C$ is closed under small inductive limits.
\item[(F1)] There is a scheme $\X$, and an additive tensor functor $\upsilon:\C\to \Qcoh(\X)$ from $\C$ to the category of quasi-coherent $\O_\X$-modules which is faithful, exact and preserves small inductive limits. (In particular, $\upsilon(\1)=\O_X$.)
\item[(F2)] $M\in \C$ is dualizable whenever $\upsilon(M)$ is a finitely generated $\O_\X$-module.
\end{enumerate}
It is this basic setup from which all the statements on Picard-Vessiot rings and their Galois groups follow. For stating those, one has to transfer several concepts into the abstract setting; most important the concept of constants/invariants: \\
It is not hard to see that for every differential module $(M,\partial_M)$ over $F$  the constants $M^\partial$ of $M$ can also be given as the vector space $\Hom_F^\partial(F,M)$ of differential homomorphisms $f:F\to M$, since every $F$-homomorphism $f:F\to M$ is uniquely determined by the image of $1\in F^\partial\subseteq F$. Similarly, the invariants $M^\sigma$ of a difference module $(M,\sigma_M)$ can be given as $\Hom_F^\sigma(F,M)$. Hence, in the abstract setting, ``taking constants'' is given by the functor
$()^\C:=\Mor_\C(\1,-):\C\to \Vect_k$ where $k$ is the field $k=\End_\C(\1)$ corresponding to the constants of a differential field $F$ resp.~the invariants of a difference field $F$.

The condition on a Picard-Vessiot ring $R$ for $M$ that the module $R\otimes_F M$ has a basis of constants/invariants is given abstractly by the condition that the natural morphism $\eps_{R\otimes M}:R\otimes \iota\left((R\otimes M)^\C\right) \to R\otimes M$ is an isomorphism
in the category $\C$ (cf.~Prop.~\ref{prop:on-iota-r}). Here $\iota:\Vect_k\to \C$ is a functor corresponding to the construction of a differential/difference module out of a $F^\partial$-vector space by tensoring with the base differential/difference ring $F$.

\medskip

The article is structured as follows.
In Section \ref{sec:comm-alg-thm}, we prove a theorem on commutative algebras which will later be used for showing that the constants of minimal simple solution rings are just a finite extension of the constants $k$, and in particular guarantee the existence of Picard-Vessiot rings up to a finite extension of constants.

In Section \ref{sec:setup}, we investigate some properties of the functors $()^\C$ and $\iota$. In particular, we show that the functor $()^\C$ is right adjoint to $\iota$. Furthermore, we show that the unit $\eta:\id_{\Vect_k}\to ()^\C\circ \iota$ of the adjunction is a natural isomorphism, and that the counit $\eps:\iota\circ ()^\C\to \id_\C$ of the adjunction provides a monomorphism $\eps_M$ for every $M\in \C$. The latter corresponds to the fact in the differential setting that the natural homomorphism $F\otimes_k M^\partial \to M$ is injective.

Section \ref{sec:c-algebras} is dedicated to commutative algebras $R$ in the category $\C$ and the category $\C_R$ of $R$-modules in $\C$ as given in \cite{sml:ca}, as well as properties of the functors $\iota_R$ and $()^{\C_R}$ similar to those of $\iota$ and $()^\C$, under certain assumptions on the algebra $R$.

Solution rings and Picard-Vessiot rings are then the subject of Section \ref{sec:solution-rings}, where also the theorems on existence and uniqueness of Picard-Vessiot rings are proven.

The objective of Section \ref{sec:pv-rings-and-fibre-functors} is the correspondence between isomorphism classes of Picard-Vessiot rings for a given dualizable $M\in \C$ and isomorphism classes of fibre functors from the strictly full abelian tensor subcategory $\tenscat{M}$ of $\C$ to $\Vect_k$.

In Section \ref{sec:galois-groups} we consider the group functors
$\underline{\Aut}_{\C-\text{alg}}(R)$ of automorphisms of $R$ and  $\underline{\Aut}^\otimes(\omega_R)$ of automorphisms of the corresponding fibre functor $\omega_R$, and we show that they are both isomorphic to the spectrum of the $k$-algebra $\omega_R(R)=(R\otimes R)^\C$. As the latter will be proven to be a Hopf-algebra of finite type over $k$, both group functors are indeed affine group schemes of finite type over $k$.

Finally, in Section \ref{sec:galois-correspondence} we prove the Galois correspondence between normal closed subgroups of the Galois group scheme $\underline{\Aut}_{\C-\text{alg}}(R)$ and $\C$-subalgebras of $R$ that are Picard-Vessiot rings for some dualizable $N\in \C$.

\begin{ack}
I would like to thank G.~B\"ockle and F.~Heiderich for their comments on earlier versions which helped a lot to improve the paper. I would also like to thank M.~Wibmer, as only a common project with him drew my attention to this general abstract setting.
\end{ack}

\section{A commutative algebra theorem}\label{sec:comm-alg-thm}

We will be faced with the question whether there exists a Picard-Vessiot ring up to a finite extension of constants. The following theorem will be a key incredient to the existence proof.
All algebras are assumed to be commutative with unit.

\begin{thm}\label{thm:abstract-algebra}
Let $k$ be a field, $S$ an algebra over $k$ and $R$ a finitely generated flat $S$-algebra. 
Furthermore, let $\ell$ be a  field extension of $k$ such that $S\otimes_k \ell$ embeds into $R$ as an $S$-algebra. Then $\ell$ is a finite extension of $k$.
\end{thm}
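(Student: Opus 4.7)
The plan is to reduce to the case where $S = K$ is a field and $R$ is a domain, and then combine linear disjointness of $K$ and $\ell$ over $k$ with Rosenlicht's theorem on units of finitely generated affine domains. First I would localize $S$ at a minimal prime $\mathfrak{q}$: by flat base change the injection becomes $S_\mathfrak{q} \otimes_k \ell \hookrightarrow R_\mathfrak{q}$, with $R_\mathfrak{q}$ still finitely generated and flat over $S_\mathfrak{q}$. Since $\mathfrak{q}$ is minimal, $S_\mathfrak{q}$ is zero-dimensional local; after reducing modulo nilpotents---using the flatness of $R_\mathfrak{q}$ and the freeness of $S_\mathfrak{q} \otimes_k \ell$ over $S_\mathfrak{q}$---we may assume $S = K$ is a field. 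Then, passing to $R/\mathfrak{P}$ for a suitable minimal prime $\mathfrak{P}$ of $R$, we arrive at an injection $K \otimes_k \ell \hookrightarrow R$ with $R$ a finitely generated $K$-algebra that is a domain. In particular $K \otimes_k \ell$ is itself a domain, which means $K$ and $\ell$ are linearly disjoint over $k$ inside $F := \Quot(R)$, so $K \cap \ell = k$.

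Next, to rule out transcendental elements, suppose $t \in \ell$ is transcendental over $k$. Linear disjointness then makes the image $\tau$ of $t$ in $R$ transcendental over $K$, so $K[\tau]$ is a polynomial ring. The embedding $K \otimes_k k(t) \hookrightarrow R$ forces $p(\tau) \in R^\times$ for every non-zero $p \in k[X]$. Distinct monic irreducibles $p \in k[X]$ factor over $K$ into disjoint sets of monic irreducibles of $K[T]$, so their images $p(\tau)$ yield $\ZZ$-linearly independent classes in $K(\tau)^\times / K^\times$, hence in $R^\times / L^\times$, where $L$ is the algebraic closure of $K$ in $F$. Since $k[X]$ has infinitely many monic irreducibles over any field $k$, this produces a free abelian subgroup of infinite rank in $R^\times / L^\times$, contradicting Rosenlicht's theorem which states that this group is finitely generated. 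Hence $\ell/k$ is algebraic.

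With $\ell$ algebraic over $k$, the compositum $K \cdot \ell \subseteq F$ is algebraic over $K$ and therefore lies inside $L$. Since $F$ is finitely generated over $K$, the standard result about finitely generated field extensions gives that $L/K$ is finite; combining this with the linear-disjointness identity $[K \cdot \ell : K] = [\ell : k]$ yields $[\ell : k] \leq [L : K] < \infty$, as required.

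The main obstacle is the initial reduction to $S$ being a field: tensoring an injective $S$-module map with a quotient ring does not in general preserve injectivity, and one must carefully combine the flatness of $R$ over $S$ with the freeness of $S \otimes_k \ell$ as an $S$-module to descend the hypothesis to the residue field of a chosen prime. After this reduction the remaining arguments are essentially standard facts about linear disjointness, the structure of units of affine domains, and the algebraic closure of $K$ in a finitely generated field extension.
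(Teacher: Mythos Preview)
Your reduction to $S=K$ a field follows the paper's outline and is fine (the paper carries out the nilpotence argument you allude to in detail). The genuine gap is the next reduction, where you pass to $R/\mathfrak{P}$ for a minimal prime $\mathfrak{P}$ and claim to retain an injection $K\otimes_k\ell\hookrightarrow R/\mathfrak{P}$. Such a $\mathfrak{P}$ exists only if $K\otimes_k\ell$ is already a domain, i.e.\ only if $K$ and $\ell$ are linearly disjoint over $k$; nothing in the hypotheses guarantees this. For a concrete failure take $k=\mathbb{Q}$, $S=K=\mathbb{Q}(i)$, $\ell=\mathbb{Q}(i)$, and $R=K\otimes_k\ell\cong\mathbb{Q}(i)\times\mathbb{Q}(i)$: every minimal prime of $R$ meets $K\otimes_k\ell$ nontrivially, so no domain quotient of $R$ receives $K\otimes_k\ell$ injectively. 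Your subsequent deduction ``in particular $K\otimes_k\ell$ is itself a domain'' is therefore circular.

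This breaks your finiteness step decisively: the identity $[K\!\cdot\!\ell:K]=[\ell:k]$ is exactly linear disjointness, and without it bounding $[K\!\cdot\!\ell:K]$ inside $\Quot(R/\mathfrak{P})$ says nothing about $[\ell:k]$ (imagine $K$ already containing a large algebraic extension of $k$). Your transcendence step can be repaired, since for a single transcendental $t$ the subring $K[\tau]\subset R$ \emph{is} a domain and one can choose $\mathfrak{P}$ avoiding it; the Rosenlicht argument then goes through in $R/\mathfrak{P}$. The paper instead handles transcendence via the constructibility of the image of $\spec(R)\to\spec(K[\tau])$, and for finiteness it never passes to a domain: it takes a primary decomposition $(0)=\bigcap_i\q_i$ in $R$, contracts to $K\otimes_k\ell'$ for each finite $\ell'\subseteq\ell$, and bounds $\dim_K\bigl((K\otimes_k\ell')/\tilde\q_i\bigr)$ componentwise using the explicit local-Artinian structure of tensor products of fields. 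That componentwise bookkeeping is precisely what replaces the unavailable linear disjointness.
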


\begin{proof}

The proof is split in several steps:

\textbf{1) Reduction to $S$ being a field} \ \\
Choose a minimal prime ideal $\p$ of $S$, and let $S_\p$ denote the localization of $S$ at $\p$. Since localizations are flat, the inclusion of rings
$S\subseteq S\otimes_k \ell\subseteq R$ induces an inclusion of rings
$$S_\p\subseteq S_\p\otimes_k \ell\subseteq S_\p\otimes_S R,$$
and $S_\p\otimes_S R$ is a finitely generated $S_\p$-algebra. Since flatness is stable under base change, $S_\p\otimes_S R$ is a flat $S_\p$-algebra.\\
Since $\p S_\p$ is the maximal ideal of $S_\p$, $\bar{S}:=S_\p/\p S_\p$ is a field, and $\bar{R}:=S_\p/\p S_\p\otimes_S R$  is a finitely generated flat algebra over $\bar{S}$.
It remains to show that $\bar{S}\otimes_k \ell$ embeds into $\bar{R}$.

 Since $S_\p\otimes_k \ell$ and $S_\p\otimes_S R$ are both flat over $S_\p$, the exact sequence 
$0\to \p S_\p\to S_\p\to S_\p/\p S_\p\to 0$ leads to a commutative diagram with exact rows 
$$\xymatrix{
0 \ar[r] & \p S_\p\otimes_k \ell \ar[r] \ar@{^{(}->}[d] & S_\p\otimes_k \ell \ar[r] \ar@{^{(}->}[d] & \left(S_\p/\p S_\p\right)\otimes_k \ell \ar[r] \ar[d] & 0 \\
0 \ar[r] & \p S_\p\otimes_S R\ar[r] & S_\p\otimes_S R\ar[r] & \left( S_\p/\p S_\p\right)\otimes_S R\ar[r] & 0.
}$$
Then the last vertical arrow is an injection if the left square is a pullback diagram. Hence, we have to proof that any element in $S_\p\otimes_k \ell$ whose image in $S_\p\otimes_S R$ actually lies in $\p S_\p\otimes_S R$ is an element of $ \p S_\p\otimes_k \ell$.

Hence, let $z=\sum_{i=1}^n s_i\otimes x_i\in S_\p\otimes_k \ell$ with $k$-linearly independent $x_1,\dots, x_n\in \ell$,  and let $w=\sum_{j=1}^m a_j\otimes r_j\in \p S_\p\otimes_S R$ such that their images in $S_\p\otimes_S R$ are the same. Since all elements in $\p S_\p$ are nilpotent, there is $e_1\geq 0$ maximal such that $a_1^{e_1}\ne 0$. Inductively for $j=2,\dots, m$, there is $e_j\geq 0$ maximal such that 
$a_1^{e_1}\cdots a_j^{e_j}\ne 0$. Let $a:=\prod_{j=1}^m a_j^{e_j}\in S_\p$. Then by construction, $a\ne 0$ but $a\cdot w=\sum_{j=1}^m a a_j\otimes r_j = 0$. So $0=a\cdot z=\sum_{i=1}^n as_i\otimes x_i$, i.e.~$as_i=0$ for all $i$. Since $a\ne 0$, one obtains $s_i\not\in (S_\p)^\times$, i.e.~$s_i\in \p S_\p$.

\medskip

From now on, we may and will assume that $S$ is a field. In this case $R$ is Noetherian as it is a finitely generated $S$-algebra.

\textbf{2) Proof that $\ell$ is algebraic over $k$}

Assume that $\ell$ is not algebraic over $k$, then there is an element $a\in \ell$ transcendental over $k$. By assumption, $a$ is also transcendental over $S$ inside $R$, i.e.~the polynomial ring $S[a]$ is a subring of $R$.
The image of the corresponding morphism $\psi:\spec(R)\to \spec(S[a])
\cong \AA_S^1$ is a dense subset of $\spec(S[a])$, since the ringhomomorphism is an inclusion, and it is locally closed by \cite[Cor.~3, Ch.~V, \S 3.1]{nb:cac1-7}.
Hence, the image is open. But for all $0\ne f\in k[a]$, the irreducible factors of $f$ in $S[a]$, are invertible in $\ell\subseteq R$. Hence, infinitely many maximal ideals of $\spec(S[a])$ are not in the image of $\psi$ -- contradicting that the image is open.

\textbf{3) Proof that $\ell$ is finite over $k$}

For showing that $\ell$ is indeed finite over $k$, we give a bound on $[\ell':k]$ for any $\ell'\subseteq \ell$ which is finite over $k$, and this bound only depends on data of $R$.
Since $\ell$ is the union of all its finite subextensions this proves finiteness of $\ell$.

For simplicity we again write $\ell$ for the finite extension $\ell'$ of $k$.\\
Let 
$$(0)=\bigcap_{i=1}^c \q_i$$
be a primary decomposition of the zero ideal $(0)\subseteq R$ and $\p_i:=\sqrt{\q_i}$ the corresponding prime ideals. Furthermore, let $N_i\in \NN$ satisfy $\p_i^{N_i}\subseteq \q_i$, i.e.~for all $y_1,\dots, y_{N_i}\in \p_i$, one has $y_1\cdot y_2\cdots y_{N_i}\in \q_i$.\footnote{This $N_i$ exists since $R$ is Noetherian and therefore $\p_i$ is finitely generated.}
Furthermore, for each $i=1,\dots, c$ let $\m_i\subseteq R$ be a maximal ideal containing $\p_i$. Then $d_i:=\dim_S{R/\m_i}$ is finite for all $i$.

We claim that $\dim_k(\ell)$ is bounded by $2\cdot \sum_{i=1}^c d_i\cdot N_i$:

First at all $R\to \prod_{i=1}^c R/\q_i$ is an injective $S$-algebra homomorphism and $R/\q_i$ is irreducible with unique minimal ideal $\p_i$.\\
Letting $\tilde{\q_i}:=\q_i\cap (S\otimes_k \ell)$, and $\tilde{\p_i}:=\p_i\cap  (S\otimes_k \ell)=\sqrt{\tilde{\q_i}}$, then $(S\otimes_k \ell)/\tilde{\q_i}$ embeds into $R/\q_i$, and 
$S\otimes_k \ell \to \prod_{i=1}^c (S\otimes_k \ell)/\tilde{\q_i}$ is injective. It therefore suffices to show that $\dim_S\left( (S\otimes_k \ell)/\tilde{\q_i} \right)\leq 2d_iN_i$ holds for each $i$.
In the following we therefore consider an arbitrary component and will omit the index $i$. 

Since $(S\otimes_k \ell)/\tilde{\q}$ is a finite $S$-algebra, and $\tilde{\p}$ is its unique minimal prime ideal, $(S\otimes_k \ell)/\tilde{\q}$ is a local Artinian algebra with residue field $(S\otimes_k \ell)/\tilde{\p}$. Since $(S\otimes_k \ell)/\tilde{\p}$ is a field, the composition
$$(S\otimes_k \ell)/\tilde{\p} \hookrightarrow R/\p \to R/\m$$
is injective. Hence, $$\dim_S\left( (S\otimes_k \ell)/\tilde{\p}\right)\leq \dim_S\left( R/\m\right)=d.$$
It remains to show that $\dim_{(S\otimes_k \ell)/\tilde{\p}}\left( (S\otimes_k \ell)/\tilde{\q} \right)\leq 2N$.

As a tensor product of fields and as $\ell/k$ is finite, $S\otimes_k \ell$ is a finite direct product of local artinian algebras with residue fields being finite extensions of $S$. The  local algebra over some finite extension $S'$ of $S$ is given as $S'\otimes_{k'} \tilde{k}$ for a finite extension $k'$ of $k$ contained in $S'$ and a purely inseparable extension $\tilde{k}/k'$.

In particular, also the algebra $(S\otimes_k \ell)/\tilde{\q}$ is of that form (as it is just isomorphic to one factor of $(S\otimes_k \ell)$). Hence, let $S'$, $k'$ and $\tilde{k}$ be such that $(S\otimes_k \ell)/\tilde{\p}\cong S'$ and
$(S\otimes_k \ell)/\tilde{\q}\cong S'\otimes_{k'} \tilde{k}$.
As $\tilde{k}$ is purely inseparable over $k'$, there are $x_1,\dots, x_t\in \tilde{k}$, $m_1,\dots, m_t\in \NN$ and $a_1,\dots, a_t\in k'$ such that 
$$\tilde{k}=k'[x_1,\dots, x_t]/\left(x_1^{p^{m_1}}-a_1,\dots, x_t^{p^{m_t}}-a_t\right).$$
where $p$ denotes the characteristic of the fields.
As $S'\otimes_{k'} \tilde{k}$ is local with residue field $S'$, there are also $s_1,\dots, s_t\in S'$ such that $s_j^{p^{m_j}}=a_j$ for all $j=1,\dots, t$, and $S'\otimes_{k'} \tilde{k}$ is given as
$$S'\otimes_{k'} \tilde{k}\cong S'[x_1,\dots, x_t]/\left((x_1-s_1)^{p^{m_1}},\dots, (x_t-s_t)^{p^{m_t}}\right).$$
In particular its nilradical (corresponding to $\tilde{\p}$) is generated by $(x_1-s_1,\dots, x_t-s_t)$.

Since $\tilde{\p}^N\subseteq \tilde{\q}$, and $(x_1-s_1)^{p^{m_1}-1}\cdots (x_t-s_t)^{p^{m_t}-1}\ne 0$ we obtain that
$$N>\sum_{j=1}^t (p^{m_j}-1)\geq \sum_{j=1}^t \frac12 p^{m_j}= \frac12 \dim_{S'}(S'\otimes_{k'} \tilde{k}).$$

Therefore, we have shown that $\dim_{(S\otimes_k \ell)/\tilde{\p}}\left( (S\otimes_k \ell)/\tilde{\q} \right)<2N$.

\end{proof}

\section{Setup and basic properties}\label{sec:setup}

In this section, we set up an abstract framework in which we can prove theorems on Picard-Vessiot extensions, as well as their Galois groups. The theorems thus apply to all kinds of differential and difference Galois theories which match the basic setup given below. The setup therefore provides a uniform approach to the existing theories.

\medskip

We consider the following setup:

\begin{enumerate}
\item[(C1)]  $\C$ is a locally small abelian symmetric monoidal category with unit object $\1\in \C$. We assume that $\1$ is a simple object in $\C$.
\item[(C2)] $\C$ is cocomplete, i.e.~$\C$ is closed under small inductive limits.
\item[(F1)] There is a scheme $\X$, and an additive tensor functor $\upsilon:\C\to \Qcoh(\X)$ from $\C$ to the category of quasi-coherent $\O_\X$-modules which is faithful, exact and preserves small inductive limits. (In particular, $\upsilon(\1)=\O_\X$.)
\item[(F2)] $M\in \C$ is dualizable whenever $\upsilon(M)$ is a finitely generated $\O_\X$-module.
\end{enumerate}

\begin{rem}
\begin{enumerate}
\item The presence of a faithful functor $\upsilon:\C\to \Qcoh(\X)$ as stated in (F1) already implies that all $\Mor_{\C}(M,N)$ are abelian groups, i.e.~that $\C$ is locally small. Hence, we could have ommitted this condition in (C1). However, in this section and Section \ref{sec:c-algebras}, we will not use conditions (F1) and (F2) and therefore need the condition ``locally small'' in (C1).
\item By an object $M\in \C$ being  \textit{dualizable}, we mean that $M$ admits a (right) dual, i.e.~an object $M^\vee\in \C$ together with two morphisms $\ev_M:M\otimes M^\vee\to \1$ (\textit{evaluation}) and
$\delta_M:\1\to M^\vee \otimes M$ (\textit{coevaluation}) such that the diagrams
\[ \xymatrix@C+6pt{ M^\vee\cong \1\negotimes M^\vee \ar[r]^(.53){\delta_M\otimes \id_{M^\vee} } \ar[dr]_{\id_{M^\vee}}& 
M^\vee \negotimes M\negotimes M^\vee \ar[d]^{\id_{M^\vee}\otimes \ev_M} \\
 & M^\vee \negotimes \1\cong M^\vee
}
\quad \text{and} \quad
 \xymatrix{ M\cong M \negotimes \1 \ar[r]^{\id_M\otimes \delta_M} \ar[rd]_{\id_M} &
 M\negotimes M^\vee \negotimes M \ar[d]^{\ev_M\otimes \id_M} \\
& \1 \negotimes M\cong M
}
\]
commute. 
\end{enumerate}
\end{rem}


\medskip

\begin{exmp}
All the settings mentioned in the introduction are examples for the category $\C$.
\end{exmp}

\medskip


In the remainder of this section, $\C$ will be a category satisfying properties (C1) and (C2).

Let $k:=\End_\C(\1)$ denote the ring of endomorphisms of the unit object $\1$. Then by simplicity of $\1$, $k$ is a division ring, and even a field, as $\End_\C(\1)$ is always commutative.

Let $\Vect_k$ denote the category of $k$-vector spaces, and $\vect_k$ the subcategory of finite dimensional $k$-vector spaces. 
There is a functor $\otimes_k:\C\times \vect_k\to \C$ such that $M\otimes_k k^n=M^n$ and in general $M\otimes_k V\cong M^{\dim(V)}$ (cf.~\cite{pd-jsm:tc}
, p.~21 for details). \\
As $\C$ is cocomplete, the functor $\otimes_k$ can be extended to
$\otimes_k:\C\times \Vect_k\to \C$ via
$$M\otimes_k V :=\varinjlim\limits_{\substack{W\subset V\\ \text{fin.dim.}}} M\otimes_k W.$$
This functor fulfills a functorial isomorphism of $k$-vector spaces
$$\Mor_\C(N,M\otimes_k V) \cong  \Mor_\C(N,M) \otimes_k V\text{ for all } M,N\in \C, V\in \Vect_k,$$ 
where the tensor product on the right hand side is the usual tensor product of $k$-vector spaces. Recall that $\Mor_\C(N,M)$ is a $k$-vector space via the action of $k=\End_\C(\1)$.

The functor $\otimes_k$ induces a tensor functor $\iota:\Vect_k\to \C$ given by $\iota(V):=\1\otimes_k V$, and one obviously has $M\otimes_k V\cong M\otimes \iota(V)$ (the second tensor product taken in $\C$). The functor $\iota$ is faithful and exact by construction.
Since $\iota$ is an exact tensor functor and all finite dimensional vector spaces have a dual (in the categorial sense), all objects $\iota(V)$ for $V\in \vect_k$ are dualizable in $\C$.

There is also a functor $(-)^\C:=\Mor_\C(\1, -):\C\to \Vect_k$ from the category $\C$ to the category of all $k$-vector spaces.

\begin{rem}
As already mentioned in the introduction, in the differential case $M^\C=M^\partial$ is just the $k$-vector space of constants of the differential module $M$. In the difference case (with endomorphism $\sigma$), $M^\C$ equals the invariants $M^\sigma$ of the difference module $M$.\\
The functor $\iota$ corresponds to the construction of ``trivial'' differential (resp.~difference) modules by tensoring a $k$-vector space with the differential (resp.~difference) base field $F$.
\end{rem}

The following proposition gives some properties of the functors $\iota$ and $(-)^\C$ which are well known for differential resp.~difference modules.



\begin{prop}\label{prop:first-properties}
Let $\C$ be a category satisfying (C1) and (C2), and let $\iota$ and $()^\C$ be as above.
Then the following hold.
\begin{enumerate}
\item If $V\in \Vect_k$, then any subobject and any quotient of $\iota(V)$ is isomorphic to $\iota(W)$ for some $W\in \Vect_k$. 
\item If $V\in \vect_k$, then $\iota(V)\in \C$ has finite length and ${\rm length}(\iota(V))=\dim_k(V)$.
\item If $M\in \C$ has finite length, then $M^\C\in \vect_k$ and $\dim_k(M^\C)\leq {\rm length}(M)$.
\end{enumerate}
\end{prop}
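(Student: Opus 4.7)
My plan is to prove the three parts in increasing order of difficulty. Parts (ii) and (iii) are formal consequences of the simplicity of $\1$ and the left-exactness of $(-)^\C$, while (i) requires a semisimplicity argument based on writing $\iota(V)$ as a coproduct of copies of $\1$.

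For (ii), since $\iota = \1\otimes_k -$ is additive, any isomorphism $V\cong k^n$ gives $\iota(V)\cong \iota(k)^n = \1^n$; the filtration $0\subsetneq \1\subsetneq \1^2\subsetneq \cdots\subsetneq \1^n$ has successive quotients all isomorphic to the simple object $\1$, so $\iota(V)$ has finite length $n = \dim_k V$ by Jordan--H\"older. For (iii), I would induct on $\ell := \mathrm{length}(M)$. In the base case of $M$ simple, any nonzero morphism $\1\to M$ has trivial kernel by simplicity of $\1$ and trivial cokernel by simplicity of $M$, and is therefore an isomorphism; thus $M^\C = 0$ unless $M\cong \1$, in which case $M^\C = \End_\C(\1) = k$, so $\dim_k M^\C\leq 1$ in either case. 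For $\ell > 1$, I pick a short exact sequence $0\to M'\to M\to S\to 0$ with $S$ simple and apply the left-exact functor $(-)^\C = \Mor_\C(\1,-)$ to conclude $\dim_k M^\C\leq \dim_k (M')^\C + \dim_k S^\C \leq (\ell-1) + 1$ by induction.

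For (i), the key observation is that $\iota$ preserves all small coproducts: combining the finite-sum additivity of $\iota$ with the construction $M\otimes_k V := \varinjlim_W M\otimes_k W$ over finite-dimensional $W\subseteq V$ recalled just before the proposition makes $\iota$ colimit-preserving. In particular, fixing a $k$-basis $(e_i)_{i\in I}$ of $V$ yields $\iota(V)\cong \bigoplus_{i\in I}\1$, a coproduct of copies of the simple object $\1$. I then apply the standard Zorn's lemma argument in abelian categories: pick $J\subseteq I$ maximal with $N\cap \bigoplus_{j\in J}\1 = 0$, and verify that maximality forces $N + \bigoplus_{j\in J}\1 = \iota(V)$. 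The core verification is that for $i\notin J$, the intersection $N\cap \bigoplus_{j\in J\cup\{i\}}\1$ is nonzero by maximality and contains a copy of $\1$ whose image under projection onto the $i$-th summand is nonzero, which by simplicity of $\1$ forces the entire $\1_i$ into $N + \bigoplus_{j\in J}\1$. Letting $W_J\subseteq V$ denote the $k$-span of $\{e_j: j\in J\}$, the resulting direct-sum decomposition $\iota(V) = N \oplus \iota(W_J)$ together with the exactness of $\iota$ gives
\[ N\cong \iota(V)/\iota(W_J) \cong \iota(V/W_J). \]
The quotient case is then immediate by applying this result to the kernel of $\iota(V)\twoheadrightarrow C$.

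The main obstacle I anticipate is the infinite-dimensional case of (i): verifying that chains in the Zorn argument admit upper bounds (i.e.\ that for a chain $(J_\alpha)$ with $N\cap \bigoplus_{j\in J_\alpha}\1 = 0$ the same property holds for $J = \bigcup_\alpha J_\alpha$) requires some form of exactness of filtered colimits in $\C$ not immediate from (C1)--(C2), as does the ``simple subobject of $\bigoplus_I\1$ is $\cong \1$'' step when $I$ is infinite. I expect the paper either uses additional structure introduced later (such as the exact, colimit-preserving faithful functor $\upsilon$ to $\Qcoh(\X)$ from (F1)) or reduces the infinite-dimensional case to the finite-dimensional case via a colimit argument on $V$ itself.
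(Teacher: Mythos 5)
Parts (ii) and (iii) of your proposal are correct; (iii) is essentially the paper's own induction (the paper takes an arbitrary proper nonzero subobject rather than a maximal one, but the argument is the same), and your (ii) differs only cosmetically in that you read off the length from the composition series $0\subsetneq\1\subsetneq\dots\subsetneq\1^n$ instead of deducing it from (i); both are fine. For (i) with $V$ finite-dimensional your argument is a genuinely different and valid route: the paper inducts on $\dim_k V$, splitting off a one-dimensional $V'\subseteq V$, using simplicity of $\1$ to see $N\cap\iota(V')\in\{0,\iota(V')\}$ and then comparing the extension $0\to\iota(V')\to N\to\iota(W)\to 0$ with $0\to\iota(V')\to\iota(W')\to\iota(W)\to 0$, whereas you build a direct-sum complement $\iota(W_J)$ of $N$ inside $\1^n$ and conclude $N\cong\iota(V/W_J)$ by exactness of $\iota$. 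Since the statement only asks that $N$ be abstractly isomorphic to some $\iota(W)$, your complement argument proves it for $V\in\vect_k$, and it avoids the extension-comparison step of the paper.

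The genuine gap is the one you flag yourself: the infinite-dimensional case of (i), which the statement does include. Under (C1)--(C2) the category $\C$ is only assumed cocomplete, so neither the upper-bound step of your Zorn argument (passing from a chain $(J_\alpha)$ with $N\cap\bigoplus_{j\in J_\alpha}\1=0$ to its union) nor the existence of a simple subobject inside a nonzero subobject of $\bigoplus_{i\in I}\1$ is justified; as written, your proof stops at $\dim_k V<\infty$. The paper takes the second of the two ways out you anticipated: it never runs Zorn, but uses $\iota(V)=\varinjlim_W\iota(W)$ over finite-dimensional $W\subseteq V$ to write $N=\varinjlim_W\bigl(N\cap\iota(W)\bigr)$, applies the finite-dimensional case to each $N\cap\iota(W)=\iota(W')$, and concludes $N=\iota\bigl(\varinjlim_W W'\bigr)$ because $\iota$ preserves these colimits; the quotient case then follows exactly as in your last step, by applying this to the kernel and using exactness of $\iota$. (Your worry is well placed in the sense that even the paper's identity $N=\varinjlim_W\bigl(N\cap\iota(W)\bigr)$ is asserted without comment and quietly uses compatibility of subobjects with the defining filtered colimit; but it does not appeal to (F1).) To complete your proposal you should replace the Zorn argument for infinite-dimensional $V$ by this colimit reduction to the finite-dimensional case.
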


\begin{proof} 
\begin{enumerate}[leftmargin=*, widest*=3]
\item First consider the case that $V\in \Vect_k$ is of finite dimension. We show the claim by induction on $\dim(V)$.\\ The case $\dim(V)=0$ is trivial.
Let $V\in \vect_k$ and $N\in \C$ be a subobject of $\iota(V)$, and let $V'\subseteq V$ be a $1$-dimensional subspace. Then one has a split exact sequence of $k$-vector spaces $0\to V'\to V\to V/V' \to 0$ and therefore a split exact sequence
$$0\to \iota(V')\to \iota(V)\to \iota(V/V')\to 0$$ in $\C$.
Since $N$ is a subobject of $\iota(V)$, the pullback $N\cap \iota(V')$ is a subobject of $\iota(V')\cong \1$. As $\1$ is simple, $N\cap \iota(V')=0$ or $N\cap \iota(V')=\iota(V')$.\\
In the first case, $N\hookrightarrow \iota(V/V')$, and the claim follows by induction on $\dim(V)$.\\
In the second case, by induction $N/\iota(V')$ is isomorphic to $\iota(W)$ for some subspace $W \subseteq  \iota(V/V')$.
If $W'$ denotes the preimage of $W$ under the  epimorphism $V\to V/V'$, one has a commutative diagram with exact rows
\[ \xymatrix{
0 \ar[r] & \iota(V') \ar[r] \ar[d]^{\cong}& N  \ar[r] \ar[d] & \iota(W) \ar[r] \ar[d]^{\cong}& 0\\
0 \ar[r] & \iota(V') \ar[r]  & \iota(W') \ar[r]   & \iota(W)  \ar[r] & 0
}, \]
 and therefore $N\cong \iota(W')$.
 
If $V\in \Vect_k$ has infinite dimension, we recall that $\iota(V)=\varinjlim\limits_{\substack{W\subset V\\ \text{fin.dim.}}} \iota(W)$ and hence, for any subobject $N\subseteq \iota(V)$, one has 
$$N=\varinjlim\limits_{\substack{W\subset V\\ \text{fin.dim.}}} N\cap \iota(W).$$
From the special case of finite dimension, we obtain $N\cap \iota(W)=\iota(W')$ for some $W'$ related to $W$, and therefore
$$N=\varinjlim\limits_{\substack{W\subset V\\ \text{fin.dim.}}} \iota(W')=\iota \left(\varinjlim\limits_{\substack{W\subset V\\ \text{fin.dim.}}} W'\right).$$
 
 Now let $V\in \Vect_k$ be arbitrary and, let $N$ be a quotient of $\iota(V)$. Then by the previous, $\Ker(\iota(V)\to N)$ is of the form $\iota(V')$ for some $V'\subseteq V$, and hence $N\cong \iota(V)/\iota(V')\cong \iota(V/V')$, as $\iota$ is exact.
\item By part (i), every sequence of subobjects $0=N_0\subsetneq N_1\subsetneq \dots \subsetneq \iota(V)$ is induced via $\iota$ by a sequence of subvector spaces $0=W_0\subsetneq W_1\subsetneq \dots  \subsetneq V$. Hence, ${\rm length}(\iota(V))=\dim_k(V)$.
\item  We use induction on the length of $M$. If $M$ has length $1$, then $M$ is a simple object. Since $\1$ also is simple, every morphism in $M^\C= \Mor_\C(\1,  M)$ is either $0$ or an isomorphism. In particular, $k=\End_\C(\1)$ acts transitively on $\Mor_\C(\1,  M)$, which shows that $\dim_k(\Mor_\C(\1,  M))$ is $0$ or $1$.
For the general case, take a subobject $0\ne N\ne M$ of $M$. Applying the functor $()^\C=\Mor_\C(\1, -)$ to the exact sequence $0\to N \to M \to M/N\to 0$ leads to an exact sequence
$$ 0 \to N^\C \to M^\C \to (M/N)^\C,$$
as the functor $ \Mor_\C(X,-)$ is always left-exact.\\
Hence, $\dim_k(M^\C)\leq \dim_k(N^\C)+\dim_k((M/N)^\C)$. Since $N$ and $M/N$ have smaller length than $M$, we obtain the claim by induction using ${\rm length}(M)={\rm length}(N)+{\rm length}(M/N)$.
\end{enumerate}
\end{proof}

\begin{prop}\label{prop:adjointness}
Let $\C$ be a category satisfying (C1) and (C2) and let $\iota$ and $()^\C$ be as above.
Then the following hold.
\begin{enumerate}
\item The functor $\iota$ is left adjoint to the functor $()^\C$, i.e.~for all $V\in \Vect_k$, $M\in \C$, there are isomorphisms of $k$-vector spaces
$\Mor_\C(\iota(V),M) \cong \Hom_k(V,M^\C)$ functorial in $V$ and $M$.
\item For every $V\in \Vect_k$, the homomorphism $\eta_V:V\to (\iota(V))^\C$ which is adjoint to $\id_{\iota(V)}$  is an isomorphism.
\item For every $M\in \C$, the morphism $\varepsilon_M:\1 \otimes_k \Mor_\C(\1, M)= \iota(M^\C)\to M$ which is adjoint to $\id_{M^\C}$ is a monomorphism.
\end{enumerate}
\end{prop}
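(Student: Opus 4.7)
The three statements form a single adjunction package, so I would prove (i) first, derive the naturality machinery, and then use it to get (ii) and (iii).

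For (i), I would handle the finite-dimensional case by hand and then bootstrap. If $V\cong k^n$, then $\iota(V)\cong \1^n$ in $\C$, so
\[ \Mor_\C(\iota(V),M) \cong \Mor_\C(\1,M)^n = (M^\C)^n \cong \Hom_k(k^n,M^\C) = \Hom_k(V,M^\C), \]
and the composite is manifestly natural in $M$ and in $V$ (i.e.\ in $n$ and the choice of basis). For a general $V\in \Vect_k$ I would write $V = \varinjlim_{W} W$ over the filtered poset of finite-dimensional subspaces. By the very definition of $\iota$ given in the paper, $\iota(V) = \varinjlim_{W}\iota(W)$, and since $\Mor_\C(-,M)$ and $\Hom_k(-,M^\C)$ both carry colimits to limits of $k$-vector spaces, the finite-dimensional isomorphism passes to the inverse limit and gives the required bijection naturally.

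For (ii) I would use the functorial isomorphism $\Mor_\C(N,M\otimes_k V)\cong \Mor_\C(N,M)\otimes_k V$ that is already recorded in the text immediately before the proposition. Taking $N=M=\1$ yields
\[ \iota(V)^\C = \Mor_\C(\1,\1\otimes_k V) \cong \End_\C(\1)\otimes_k V = k\otimes_k V = V.\]
To identify this canonical map with the unit $\eta_V$, I would trace an element $v\in V$ to the morphism $\1\cong \1\otimes_k k \to \1\otimes_k V$ induced by $v$, and check that this coincides with what one gets by running $\id_{\iota(V)}$ through the adjunction bijection of (i); this is a direct unpacking of the construction in (i) on the finite-dimensional piece $kv\subseteq V$.

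For (iii) the key ingredient is Proposition \ref{prop:first-properties}(i). Let $K$ denote the kernel of $\varepsilon_M$ inside $\iota(M^\C)$. Since $K$ is a subobject of $\iota(M^\C)$, Proposition \ref{prop:first-properties}(i) yields $K\cong \iota(W)$ for some $k$-subspace $W\subseteq M^\C$. Applying the left-exact functor $()^\C = \Mor_\C(\1,-)$ to the short exact sequence $0\to K\to \iota(M^\C)\xrightarrow{\varepsilon_M} M$ gives an exact sequence
\[ 0 \longrightarrow K^\C \longrightarrow \iota(M^\C)^\C \xrightarrow{\ (\varepsilon_M)_*\ } M^\C.\]
One of the triangle identities of the adjunction established in (i) reads $(\varepsilon_M)_*\circ \eta_{M^\C}=\id_{M^\C}$, and since $\eta_{M^\C}$ is an isomorphism by (ii), so is $(\varepsilon_M)_*$. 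Hence $K^\C=0$. But by (ii) applied to $\iota(W)$ one has $K^\C \cong W$, so $W=0$ and therefore $K=0$, proving $\varepsilon_M$ is a monomorphism.

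The only non-formal step is the appeal to Proposition \ref{prop:first-properties}(i) in part (iii): without the fact that every subobject of $\iota(M^\C)$ is again of the form $\iota(W)$, one could not transfer vanishing of $K^\C$ back to vanishing of $K$, and a more elaborate argument using $\upsilon$ (which is unavailable in this section) would be needed. Everything else is formal adjunction-calculus and the commutation of $\iota$ with filtered colimits built into its definition.
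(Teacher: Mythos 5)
Your proposal is correct and follows essentially the same route as the paper: part (ii) via the functorial isomorphism $\Mor_\C(\1,\1\otimes_k V)\cong k\otimes_k V$, and part (iii) via Prop.~\ref{prop:first-properties}(i), left-exactness of $()^\C$, and the triangle identity $(\varepsilon_M)^\C\circ\eta_{M^\C}=\id_{M^\C}$ are exactly the paper's arguments. The only (harmless) variation is in part (i), where you handle the finite-dimensional case by additivity, $\iota(k^n)\cong\1^n$, instead of the paper's use of dualizability of $\iota(V)$ and the isomorphism $\Mor_\C(\iota(V),M)\cong\Mor_\C(\1,M\otimes\iota(V)^\vee)$, before passing to filtered colimits in the same way.
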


\begin{rem}\label{rem:iota-full}
\begin{enumerate}
\item Whereas in the differential resp.~difference settings, part (i) and (ii) are easily seen, part (iii) amounts to saying that any set $v_1,\dots ,v_n\in M^\C$ of constant (resp.~invariant) elements of $M$ which are $k$-linearly independent, are also independent over the differential (resp.~difference) field $F$. This is proven in each setting separately. However, Amano and Masuoka provide an abstract proof (which is given in \cite[Prop.~3.1.1]{ka:ridepvt}) which relies on the Freyd embedding theorem.
\item The collection of homomorphisms $(\eta_V)_{V\in \Vect_k}$ is just the natural transformation $\eta:\id_{\Vect_k}\to (-)^\C\circ \iota$ (unit of the adjunction) whereas the morphisms $\varepsilon_M$ form the natural transformation $\varepsilon:\iota\circ (-)^\C\to \id_\C$ (counit of the adjunction).
By the general theory on adjoint functors, for all $V,W\in \Vect_k$, the maps $\Hom_k(V,W)\to \Mor_{\C}(\iota(V),\iota(W))$ induced by applying $\iota$ are just the compositions
$$\hspace*{15mm} \xymatrix@1@C+20pt{
\Hom_{k}(V,W) \ar[r]^(0.45){\eta_W\circ (-)} & \Hom_{k}(V,\iota(W)^{\C}) &\Mor_{\C}(\iota(V),\iota(W))
\ar[l]^{\simeq}_{\rm adjunction} }$$
(cf.~\cite{sml:cwm},p.~81,eq.~(3) and definition of $\eta$). This implies that $\eta_W$ is a monomorphism for all $W\in \Vect_k$ if and only if $\Hom_k(V,W)\to \Mor_{\C}(\iota(V),\iota(W))$ is injective for all $V,W\in \Vect_k$, i.e.~if $\iota$ is a faithful functor. Furthermore, $\eta_W$ is a split epimorphism for all $W\in \Vect_k$ if and only if $\Hom_k(V,W)\to \Mor_{\C}(\iota(V),\iota(W))$ is surjective for all $V,W\in \Vect_k$, if and only if $\iota$ is a full functor. In particular, $\eta_W$ being an isomorphism for all $W\in \Vect_k$ is equivalent to $\iota$ being a fully faithful functor.
%
\end{enumerate}
\end{rem}

\begin{proof}[Proof of Prop.~\ref{prop:adjointness}]
\begin{enumerate}[leftmargin=*, widest*=3]
\item For $V\in \vect_k$ and $M\in \C$ we have natural isomorphisms
\begin{eqnarray*}
\Mor_\C(\iota(V),M) &\cong & \Mor_\C(\1, M\otimes \iota(V)^\vee) \cong  \Mor_\C(\1,  M\otimes_k V^\vee)\\
&\cong &    \Mor_\C(\1,M)\otimes_k V^\vee\cong \Hom_k(V,  \Mor_\C(\1,M) ) \\
&= &   \Hom_k(V, M^\C)
\end{eqnarray*}
If $V$ is of infinite dimension the statement is obtained using that $\Mor_\C$ and $\Hom_k$ commute with inductive limits, i.e.
\begin{eqnarray*}
 \Mor_\C(\iota(V),M) &=& \Mor_\C(\varinjlim\limits_{\substack{W\subset V\\ \text{fin.dim}}} \iota(W), M) = \varprojlim\limits_{\substack{W\subset V\\ \text{fin.dim}}}  \Mor_\C(\iota(W),M)\\
&\cong &  \varprojlim\limits_{\substack{W\subset V\\ \text{fin.dim}}} \Hom_k(W,M^\C)
 = \Hom_k(V,M^\C).
\end{eqnarray*}
\item We have, $(\iota(V))^\C=\Mor_\C(\1,\1\otimes_k V)\cong  \Mor_\C(\1,\1)\otimes_k V\cong k\otimes_k V=V$, and the morphism $\id_{\iota(V)}$ corresponds to $\id_V:V\xrightarrow{\eta_V} (\iota(V))^\C\cong V$ via all these natural identifications.
\item Let $M\in \C$ and $N:=\Ker(\varepsilon_M)\subseteq  \iota(M^\C)$. By Prop.~\ref{prop:first-properties}(i), there is a subspace $W$ of $M^\C$ such that $N=\iota(W)$. Hence, we have an exact sequence of morphisms
$$0 \to \iota(W)\to \iota(M^\C) \xrightarrow{\varepsilon_M} M.$$
Since $()^\C$ is left exact, this leads to the exact sequence
$$0\to (\iota(W))^\C\to (\iota(M^\C))^\C \xrightarrow{(\varepsilon_M)^\C}\ M^\C$$
But $\eta_V:V\to (\iota(V))^\C$ is an isomorphism for all $V$ by part (ii). So we obtain an exact sequence
$$0\to W\to M^\C \xrightarrow{(\varepsilon_M)^\C \circ \eta_{M^\C}} M^\C,$$
and the composite $(\varepsilon_M)^\C \circ \eta_{M^\C}$ is the identity on $M^\C$ by general theory on adjoint functors (cf.~\cite[Ch.~IV, Thm.~1]{sml:cwm}). Hence, $W=0$.

\end{enumerate}
\end{proof}

\section{$\C$-algebras and base change}\label{sec:c-algebras}

We recall some notation which are already present in \cite[Ch.~17 \& 18]{sml:ca}, and refer to loc.~cit.~for more details. The reader should be aware that a ``tensored category'' in \cite{sml:ca} is the same as an ``abelian symmetric monoidal category''  here.\\ 
A {\markdef commutative algebra in $\C$} (or a {\markdef $\C$-algebra} for short) is an object $R\in \C$ together with two morphisms $u_R:\1\to R$ and $\mu_R:R\otimes R\to R$ satisfying several commuting diagrams corresponding to associativity, commutativity and the unit. For instance,
$$\mu_R\circ (u_R\otimes \id_R)=\id_R=\mu_R\circ (\id_R\otimes u_R)$$
says that $u_R$ is a unit for the multiplication $\mu_R$ (cf.~\cite[Ch.~17]{sml:ca}; although the term ``$\C$-algebra'' in \cite{sml:ca} does not include commutativity).

For a $\C$-algebra $R$ we define $\C_R$ to be the category of $R$-modules in $\C$, i.e. the category of pairs $(M,\mu_M)$ where $M\in \C$ and $\mu_M:R\otimes M\to M$ is a morphism in $\C$ satisfying the usual commuting diagrams for turning $M$ into an $R$-module (cf.~\cite[Ch.~18]{sml:ca}).\footnote{Most times, we will omit the $\mu_M$ in our notation, and just write $M\in \C_R$.} The morphisms in $\C_R$ are morphisms in $\C$ which commute with the $R$-action. The category $\C_R$ is also an abelian symmetric monoidal category with tensor product $\otimes_R$ defined as
$$M\otimes_R N:={\rm Coker}((\mu_M\circ \tau)\otimes \id_N- \id_M\otimes \mu_N:M\otimes R\otimes N\to M\otimes N),$$
where $\tau:M\otimes R\to R\otimes M$ is the twist morphism (see~\cite[Prop.~18.3]{sml:ca}).

A {\markdef $\C$-ideal} $I$ of a $\C$-algebra $R$ is a subobject of $R$ in the category $\C_R$, and $R$ is called a {\markdef simple} $\C$-algebra, if $0$ and $R$ are the only $\C$-ideals of $R$, i.e.~if $R$ is a simple object in $\C_R$.

\begin{defn}
For a $\C$-algebra $R$, the additive right-exact functor $()_R:(\C,\otimes)\to (\C_R,\otimes_R), M\mapsto M_R:=(R\otimes M, \mu_R\otimes \id_M)$ is called the \emph{base change functor}. It is even a tensor functor, and it is a left adjoint to the forgetful functor $\C_R\to \C$ (see~\cite[Thm.~18.2]{sml:ca}). \\
We can also base change the functors $\iota$ and $()^\C$. In more details, having in mind that $\End_{\C_R}(R)=\Mor_\C(\1,R)=R^\C$:
$$\iota_R:\mod_{R^\C}\to \C_R, V\mapsto R\otimes_{\iota(R^\C)} \iota(V)$$ 
and
$$()^{\C_R}:\C_R\to \mod_{R^\C}, M\mapsto \Mor_{\C_R}(R,M)=\Mor_{\C}(\1,M)=M^\C.$$
 
\medskip

A special case is given, if $R=\iota(A)$ for some commutative $k$-algebra $A$. In this case, $\iota_R$ is ``the same'' as $\iota$. This case corresponds to an extension by constants in the theory of differential or difference modules.
\end{defn}

\begin{prop}
The functor $\iota_R$ is left adjoint to the functor $()^{\C_R}$. 
\end{prop}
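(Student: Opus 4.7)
The plan is to decompose $\iota_R$ as a composition of two functors, each admitting an explicit right adjoint, and then compose the adjunctions.

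Write $A := R^\C$. Since $\iota$ is a tensor functor with right adjoint $()^\C$ (Prop.~\ref{prop:adjointness}(i)), the latter is lax monoidal; in particular $A=R^\C$ is naturally a commutative $k$-algebra, and the counit $\varepsilon_R:\iota(A)\to R$ is a morphism of commutative $\C$-algebras. Thus $R$ becomes an $\iota(A)$-algebra, and the argument of \cite[Thm.~18.2]{sml:ca} applied to $\varepsilon_R$ supplies a base-change adjunction
\[
\Mor_{\C_R}(R\otimes_{\iota(A)} N,\,M) \;\cong\; \Mor_{\C_{\iota(A)}}(N,\,M),
\]
natural in $N\in\C_{\iota(A)}$ and $M\in\C_R$, where $M$ is viewed in $\C_{\iota(A)}$ by restriction of scalars along $\varepsilon_R$.

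The key intermediate step is to refine Prop.~\ref{prop:adjointness}(i) to an adjunction between $\mod_A$ and $\C_{\iota(A)}$. Since $\iota$ is a tensor functor, any $A$-module $V$ in $\Vect_k$ yields an $\iota(A)$-module $\iota(V)$ in $\C$. Symmetrically, the lax monoidal structure on $()^\C$ provides a natural $A$-action on $M^\C$ for every $M\in\C_{\iota(A)}$, via
\[
A\otimes_k M^\C \;=\; (\iota(A))^\C\otimes_k M^\C \;\to\; (\iota(A)\otimes M)^\C \;\to\; M^\C.
\]
I would then verify that the natural isomorphism $\Mor_\C(\iota(V),M)\cong \Hom_k(V,M^\C)$ restricts to a bijection between $\iota(A)$-linear maps on the left and $A$-linear maps on the right. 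Given $f\colon\iota(V)\to M$ corresponding to $\tilde f\colon V\to M^\C$, the $\iota(A)$-linearity of $f$ translates, via naturality of the adjunction in both arguments and the definition of the $A$-action on $M^\C$, precisely into the $A$-linearity of $\tilde f$.

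Composing the two adjunctions then yields, for $V\in\mod_A$ and $M\in\C_R$,
\[
\Mor_{\C_R}(\iota_R(V),M) \;\cong\; \Mor_{\C_{\iota(A)}}(\iota(V),M) \;\cong\; \Hom_A(V,M^\C) \;=\; \Hom_A(V,M^{\C_R}),
\]
which is the claimed adjunction, with all isomorphisms natural in $V$ and $M$. The main obstacle is the middle verification --- that the $\iota\dashv()^\C$ adjunction respects the module structures over $A$ and $\iota(A)$; once this is granted, everything else is a formal combination of the base-change adjunction and Prop.~\ref{prop:adjointness}(i), expressing the general principle that a monoidal adjunction lifts to an adjunction between module categories over corresponding commutative algebras.
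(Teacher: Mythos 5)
Your proposal is correct and follows essentially the same route as the paper: the paper likewise first reduces via the base-change adjunction to $\Mor_{\C_{\iota(R^\C)}}(\iota(V),M)$, then identifies $\iota(R^\C)$-linear morphisms with $R^\C$-linear maps $V\to M^\C$ by the naturality of the adjunction bijection of Prop.~\ref{prop:adjointness}(i), which is exactly your ``key intermediate step''. The only difference is presentational: you phrase it as lifting a monoidal adjunction to module categories, while the paper verifies directly that the two linearity diagrams correspond under the natural bijection.
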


\begin{proof}
Let $V\in \mod_{R^\C}$ and $M\in \C_R$, then
$$\Mor_{\C_R}(\iota_R(V),M)=\Mor_{\C_R}(R\otimes_{\iota(R^\C)} \iota(V),M)=\Mor_{\C_{\iota(R^\C)}}(\iota(V),M)$$
is the subset of $\Mor_{\C}(\iota(V),M)$ given by all $f\in \Mor_{\C}(\iota(V),M)$ such that the diagram
$$\xymatrix{
\iota(R^\C)\otimes \iota(V) \ar[r]^{\id\otimes f} \ar[d]^{\iota(\mu_V)} & \iota(R^\C)\otimes M \ar[d]^{\mu_M} \\
\iota(V)  \ar[r]^{f} & M
}$$
commutes. On the other hand, $\Hom_{R^\C}(V,M^{\C_R})=\Hom_{R^\C}(V,M^\C)$ is the subset of $\Hom_{k}(V,M^\C)$  given by all $g\in \Hom_{k}(V,M^\C)$ such that the diagram
$$\xymatrix{
R^\C\otimes_k V \ar[r]^{\id\otimes g} \ar[d]^{\mu_V} & R^\C\otimes_k M^\C \ar[d]^{(\mu_M)^\C} \\
V  \ar[r]^{g} & M^\C
}$$
commutes. Assume that $f$ and $g$ are adjoint morphisms (i.e.~correspond to each other via the bijection $\Mor_{\C}(\iota(V),M)\cong \Hom_{k}(V,M^\C)$ of Prop.\ref{prop:adjointness}(i)), then the commutativity of the first diagram is equivalent to the commutativity of the second, since the bijection of the hom-sets is natural.
\end{proof}

\begin{lemma}\label{lemma:ideal-bijection::abstract}
Let $A$ be a commutative $k$-algebra. Then $\iota_{\iota(A)}$ and $()^{\C_{\iota(A)}}$ define a bijection between the ideals of $A$ and the $\C$-ideals of $\iota(A)$.
\end{lemma}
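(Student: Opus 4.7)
The plan is to use Prop.~\ref{prop:first-properties}(i) to parametrize the $\C$-subobjects of $\iota(A)$ in $\C$ by $k$-subspaces of $A$, and then check that the additional compatibility with the $R$-module structure (encoding ``$\C$-ideal'') translates exactly into the condition ``ideal of $A$''. As a preliminary step, note that for $R=\iota(A)$, Prop.~\ref{prop:adjointness}(ii) gives $R^\C=(\iota(A))^\C\cong A$, hence $\iota(R^\C)\cong R$. Consequently $\iota_R(V)=R\otimes_{\iota(R^\C)}\iota(V)\cong \iota(V)$ for every $A$-module $V$, and $M^{\C_R}=\Mor_{\C_R}(R,M)=\Mor_\C(\1,M)=M^\C$. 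Thus $\iota_R$ and $()^{\C_R}$ reduce to $\iota$ and $()^\C$ on the data at hand, and it suffices to show that these restrict to mutually inverse bijections between the sets of ideals.

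For the forward direction, let $I\subseteq A$ be an ideal. Exactness of $\iota$ yields a monomorphism $\iota(I)\hookrightarrow \iota(A)=R$ in $\C$. Since $\iota$ is a tensor functor, the multiplication $\mu_R:R\otimes R\to R$ equals $\iota(\mu_A)$ (up to the canonical identification $R\otimes R\cong \iota(A\otimes_k A)$), and the composite
\[
R\otimes \iota(I)\cong \iota(A\otimes_k I)\longrightarrow \iota(A\otimes_k A)\xrightarrow{\iota(\mu_A)} \iota(A)=R
\]
factors through $\iota(I)\hookrightarrow R$, because $\mu_A(A\otimes_k I)\subseteq I$ and $\iota$ is exact. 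This endows $\iota(I)$ with the structure of a $\C_R$-subobject of $R$, i.e.~a $\C$-ideal.

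Conversely, let $J\subseteq R$ be a $\C$-ideal. Forgetting the $R$-structure, $J$ is a $\C$-subobject of $\iota(A)$, so by Prop.~\ref{prop:first-properties}(i) there is a $k$-subspace $W\subseteq A$ with $J\cong \iota(W)$; applying $()^\C$ and invoking Prop.~\ref{prop:adjointness}(ii), this $W$ is identified with $J^\C$ as a subspace of $A=R^\C$, and is uniquely determined since $\iota$ is fully faithful (Rem.~\ref{rem:iota-full}(ii), using that $\eta$ is an isomorphism by Prop.~\ref{prop:adjointness}(ii)). The $R$-action $\mu_J:R\otimes J\to J$ is compatible with the inclusion $J\hookrightarrow R$; rewriting it as $\iota(A)\otimes \iota(W)\cong \iota(A\otimes_k W)\to \iota(W)$ and applying $()^\C$ (which inverts $\iota$ on such objects) produces a $k$-linear map $A\otimes_k W\to W$ that agrees with the restriction of $\mu_A$. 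Hence $W$ is closed under multiplication by $A$, i.e.~an ideal of $A$.

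Mutual inversion is then immediate: $(\iota(I))^\C\cong I$ by Prop.~\ref{prop:adjointness}(ii), and $\iota(J^\C)\cong \iota(W)\cong J$ by construction. The only genuinely non-formal step is the translation of the $R$-action on a subobject $\iota(W)\subseteq \iota(A)$ into an $A$-action on $W$; this is the point where it is essential that $\iota$ is a tensor functor and fully faithful, so that applying $()^\C$ really recovers the underlying multiplicative data on $A$. Everything else is a direct consequence of the adjunction $\iota\dashv()^\C$.
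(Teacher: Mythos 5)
Your proof is correct and follows essentially the same route as the paper: reduce to the bijection between $k$-subspaces of $A$ and subobjects of $\iota(A)$ from Prop.~\ref{prop:first-properties}(i), then transfer the ideal condition using that $\iota$ is an exact faithful tensor functor. The only cosmetic difference is that the paper checks the condition ``the composite $A\otimes_k I\to A\to A/I$ is zero'' and uses faithfulness alone, while you descend the action morphism $\iota(A\otimes_k W)\to\iota(W)$ to a map $A\otimes_k W\to W$ via full faithfulness of $\iota$ (justified by Prop.~\ref{prop:adjointness}(ii) and Rem.~\ref{rem:iota-full}), which is equally valid.
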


\begin{proof}
By definition $\iota_{\iota(A)}(I)=\iota(I)$ for any $I\in \mod_A$. Furthermore, by Prop.~\ref{prop:first-properties}(i), $\iota$ induces a bijection between the $k$-subvector spaces of $A$ and the subobjects of $\iota(A)$ in $\C$. The condition on $I$ being an ideal of $A$  (resp.~of $\iota(I)$ being an ideal of $\iota(A)$) is equivalent to the condition that the composite $A\otimes_k I\xrightarrow{\mu_A} A\to A/I$ (resp.~the composite $\iota(A)\otimes \iota(I)\xrightarrow{\mu_{\iota(A)}} \iota(A)\to \iota(A)/\iota(I)$) is the zero map. Hence, the condition for $\iota(I)$ is obtained from the one for $I$ by applying $\iota$, and using that $\iota$ is an exact tensor functor. Since $\iota$ is also faithful, these two conditions are indeed equivalent.
\end{proof}

In the special case that $A$ is a field, one obtains the following corollary.

\begin{cor}\label{cor:still-simple::abstract}
Let $\ell$ be a field extension of $k$, then $\iota(\ell)$ is a simple $\C$-algebra.
\end{cor}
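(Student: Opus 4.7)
The plan is to invoke Lemma \ref{lemma:ideal-bijection::abstract} directly with $A = \ell$. Since $\ell$ is a field, its only ring-theoretic ideals are $(0)$ and $\ell$ itself, so the bijection provided by the lemma forces $\iota(\ell)$ to have exactly two $\C$-ideals.

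In more detail, I would first note that the lemma applies since $\ell$ is in particular a commutative $k$-algebra (via the inclusion $k \hookrightarrow \ell$ of fields). Applying $\iota_{\iota(\ell)}$ to the two ideals of $\ell$, one obtains $\iota_{\iota(\ell)}((0)) = \iota(0) = 0$ and $\iota_{\iota(\ell)}(\ell) = \iota(\ell)$. Because the lemma asserts that this assignment is a bijection between the set of ideals of $\ell$ and the set of $\C$-ideals of $\iota(\ell)$, these two objects exhaust all $\C$-ideals of $\iota(\ell)$.

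By the definition of simplicity for a $\C$-algebra given just before the lemma (namely, that $0$ and $R$ are the only $\C$-ideals of $R$), this is exactly the statement that $\iota(\ell)$ is simple. There is no real obstacle here — the work has all been done in Lemma \ref{lemma:ideal-bijection::abstract}; the corollary is merely the specialization from commutative $k$-algebras to fields, where the ideal structure is trivial.
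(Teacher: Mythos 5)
Your proof is correct and is exactly how the paper obtains this corollary: the paper states it as the immediate specialization of Lemma~\ref{lemma:ideal-bijection::abstract} to the case where $A$ is a field, so the only $\C$-ideals of $\iota(\ell)$ are $0$ and $\iota(\ell)$.
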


\begin{rem}
As $\iota_R$ and $()^{\C_R}$ are adjoint functors, there are again the unit and the counit of  the adjunction.
By abuse of notation, we will again denote the unit by $\eta$ and the counit by $\eps$. There might be an ambiguity which morphism is meant by $\eps_M$ if $(M,\mu_M)$ is an object in $\C_R$. However, when $M$ is explicitly given as an object of $\C_R$, then $\eps_M:\iota_R(M^{\C_R})\to M$ is meant. This is the case, for example, if $M=N_R$ is the base change of an object $N\in \C$.\\
In cases where the right meaning of $\eps_M$  would not be clear, we always give the source and the target of $\eps_M$.
\end{rem}

\begin{prop}\label{prop:on-iota-r}
Assume that, $\iota_R$ is exact and faithful \footnote{For differential rings this means that the ring $R$ is faithfully flat over $\iota(R^\C)$.}, and that any subobject of $R^n$ is of the form $\iota_R(W)$, then  the following holds.
\begin{enumerate}
\item For every $V\in \Mod_{R^\C}$, every subobject of $\iota_R(V)$ is isomorphic to $\iota_R(W)$ for some $W\subseteq V$.
\item For every $V\in \Mod_{R^\C}$, the morphism $\eta_V:V\to (\iota_{R}(V))^{\C_R}$ is an isomorphism.
\item For every $M\in \C_R$, the morphism $\varepsilon_M:\iota_R(M^{\C_R})\to M$ is a monomorphism.
\end{enumerate}
\end{prop}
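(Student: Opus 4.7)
My plan is to prove the three parts in the stated order, each feeding into the next, paralleling Propositions~\ref{prop:first-properties}(i) and~\ref{prop:adjointness}(ii)--(iii) in the $\C_R$-setting.

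For (i), the first step is to upgrade the hypothesis to a \emph{bijection} between submodules $W\subseteq (R^\C)^n$ and subobjects of $R^n$ via $W\mapsto \iota_R(W)$. Surjectivity is the hypothesis, while injectivity follows from the fact that a faithful exact functor reflects isomorphisms: if $\iota_R(W_1)=\iota_R(W_2)$ as subobjects of $R^n$, then exactness yields $\iota_R(W_1\cap W_2)=\iota_R(W_1)\cap \iota_R(W_2)=\iota_R(W_1)$, so $W_1\cap W_2=W_1$, and by symmetry $W_1=W_2$. For finitely generated $V$, I would pick a surjection $\pi\colon (R^\C)^n\twoheadrightarrow V$ with kernel $K$; any subobject $N\subseteq \iota_R(V)$ pulls back along $\iota_R(\pi)$ to a subobject $N'\supseteq \iota_R(K)$ of $R^n$, which by the bijection is $\iota_R(W')$ with $K\subseteq W'$, so $N=\iota_R(W'/K)$. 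For arbitrary $V$, I would write $V=\varinjlim V_\alpha$ as the filtered colimit of its finitely generated submodules, use that $\iota_R$ preserves this colimit (being a left adjoint), and argue $N=\varinjlim(N\cap \iota_R(V_\alpha))$ exactly as in the proof of Prop.~\ref{prop:first-properties}(i).

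Part (ii) then follows from (i) by a graph argument showing that $\iota_R$ is fully faithful. Given $f\colon \iota_R(V)\to \iota_R(W)$, the graph $\Gamma_f$ is a subobject of $\iota_R(V)\oplus \iota_R(W)\cong \iota_R(V\oplus W)$, so by (i) one has $\Gamma_f=\iota_R(U)$ for some $U\subseteq V\oplus W$. The restriction $q_V\colon U\to V$ of the first projection becomes, after applying $\iota_R$, the projection $\Gamma_f\to \iota_R(V)$, which is an isomorphism; since $\iota_R$ reflects isomorphisms, $q_V$ itself is an isomorphism, and $g:=q_W\circ q_V^{-1}\colon V\to W$ (with $q_W$ the second projection restricted to $U$) satisfies $\iota_R(g)=f$. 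Together with the hypothesised faithfulness this makes $\iota_R$ fully faithful, which by the $\C_R$-analogue of Remark~\ref{rem:iota-full}(ii) is equivalent to $\eta_V$ being an isomorphism for every $V\in \Mod_{R^\C}$.

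For (iii) I would mirror the proof of Prop.~\ref{prop:adjointness}(iii): letting $N:=\ker(\eps_M)$, part (i) gives $N=\iota_R(W)$ for some $W\subseteq M^{\C_R}$; applying the left exact functor $()^{\C_R}$ to $0\to \iota_R(W)\to \iota_R(M^{\C_R})\to M$ and using (ii) to identify $(\iota_R(W))^{\C_R}=W$ and $(\iota_R(M^{\C_R}))^{\C_R}=M^{\C_R}$, one obtains a left exact sequence $0\to W\to M^{\C_R}\xrightarrow{(\eps_M)^{\C_R}\circ \eta_{M^{\C_R}}} M^{\C_R}$ whose right-hand map is the identity by the unit--counit triangle identity, so $W=0$ and hence $N=0$. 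The real content is concentrated in (i); the main technical obstacle is the filtered-colimit step when passing from finitely generated to arbitrary $V$, which I would handle in the same spirit as Prop.~\ref{prop:first-properties}(i). Once (i) is settled, (ii) and (iii) are essentially formal consequences of (i), the adjunction, and faithfulness.
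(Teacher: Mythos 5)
Your argument is correct, but it runs in the opposite direction from the paper's. The paper proves (ii) first, by a direct lifting argument: a morphism $g:R\to\iota_R(V)$ is pulled back along the canonical epimorphism $R\otimes\iota(V)\to\iota_R(V)$, the pullback $P\subseteq R\oplus(R\otimes\iota(V))$ is written as $\iota_R(W)$ using the hypothesis, the induced epimorphism $q:W\to R^\C$ is split because $R^\C$ is free over itself, and the splitting produces the required lift; then (i) is deduced from (ii), using full faithfulness to recognise the restriction of the projection $R\otimes\iota(V)\to\iota_R(V)$ as $\iota_R$ of a module map. You instead prove (i) directly from the hypothesis, via a finite presentation $(R^\C)^n\twoheadrightarrow V$ together with a filtered-colimit step modelled on Prop.~\ref{prop:first-properties}(i), and then obtain (ii) from (i) by the graph trick plus the unit-isomorphism criterion of Remark~\ref{rem:iota-full}(ii); part (iii) is identical in both treatments. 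Each route has a small advantage: yours invokes the hypothesis only for finite powers $R^n$ (the paper's proof applies it to subobjects of $R\otimes\iota(V)\cong R^{\dim_k V}$, which may be an infinite power), at the price of the colimit manipulation and of needing $\iota_R$ to preserve filtered colimits and reflect inclusions (both available: $\iota_R$ is a left adjoint, and faithful exact functors reflect monos, epis and isomorphisms); the paper's route is shorter, uniform in $V$, and avoids the colimit step, using instead projectivity of $R^\C$ over itself. One point to make explicit in your write-up: you read the hypothesis as saying that a subobject of $R^n$ \emph{is} $\iota_R(W')$ for a submodule $W'\subseteq(R^\C)^n$ with the induced inclusion (you need this to conclude $K\subseteq W'$ and to compute the image as $\iota_R(\pi(W'))$); this is exactly how the paper itself uses the hypothesis in its proof of (i), so it is consistent, but it is worth stating since the weaker reading (abstract isomorphism) would not suffice for your order of argument without an extra reduction.
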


The most important cases where the proposition applies is on the one hand the case $R=\iota(A)$ for some commutative $k$-algebra $A$ (in which case $\iota_R=\iota$), and on the other hand $R$ being a simple $\C$-algebra.

\begin{proof}
\begin{enumerate}[leftmargin=*, widest*=3] \setcounter{enumi}{1}
\item We show that $\eta_V:V\to (\iota_{R}(V))^{\C_R}$ is an isomorphism for all $V\in  \Mod_{R^\C}$. 
As $\iota$ is faithful by assumption, all $\eta_V$ are monomorphisms (cf.~Rem.~\ref{rem:iota-full}).
For showing that $\eta_V$ is an epimorphism, it is enough to show that the natural map
$$R^\C\otimes_k V=(R\otimes \iota(V))^{\C_R}\to (\iota_R(V))^{\C_R}$$
is an epimorphism, where on the left hand side, $V$ is considered just as a $k$-vector space. Saying that  this map is epimorphic is equivalent to saying that any morphism $g:R\to \iota_R(V)$ in $\C_R$ can be lifted to 
a morphism $f:R\to R\otimes \iota(V)$ in $\C_R$. So let  $g:R\to \iota_R(V)$ be a morphism in $\C_R$, and let $P$ be the pullback of the diagram
$$\xymatrix{
P \ar@{->>}[r]^{{\rm pr}_1} \ar[d]^{{\rm pr}_2} & R\ar[d]^{g} \\
R\otimes \iota(V)  \ar@{->>}[r]^(.35){p} & \iota_R(V)   
}.$$
Then $P$ is a subobject of $R\oplus (R\otimes \iota(V))\cong R^{1+\dim_k(V)}$, and hence by assumption, $P=\iota_R(W)$ for some $W\in \mod_{R^\C}$. By adjointness, ${\rm pr}_1$ corresponds to some $R^\C$-homomorphism $q:W\to R^{\C_R}=R^\C$, i.e.~${\rm pr}_1=\varepsilon_R\circ \iota_R(q)$. Since $\varepsilon_R:R=\iota_R\left(R^{\C_R}\right)\to R$ is the identity, and ${\rm pr}_1$ is an epimorphism, faithfulness of $\iota_R$ implies that also $q$ is an epimorphism. Therefore, there is a $R^\C$-homomorphism $s:R^\C\to W$ such that $q\circ s=\id_{R^\C}$.
Let $f$ be the morphism $f:={\rm pr}_2\circ \iota_R(s):R\to R\otimes \iota(V)$, then
$$p\circ f=p\circ {\rm pr}_2\circ \iota_R(s)
= g\circ {\rm pr}_1\circ \iota_R(s)=g\circ  \iota_R(q\circ s)=g.$$
Hence, $f$ is a lift of $g$.

\setcounter{enumi}{0}
\item We show that any subobject of $\iota_R(V)$ is of the form $\iota_R(W)$ for some submodule $W$ of $V$. The case of a quotient of $\iota_R(V)$ then follows in the same manner as in Prop.~\ref{prop:first-properties}.
Let $N\subseteq \iota_R(V)$ be a subobject in $\C_R$. Then the pullback of $N$ along $p:R\otimes \iota(V)\to \iota_R(V)$ is a subobject of $R\otimes \iota(V)$, hence by assumption of the form $\iota_R(\tilde{W})$ for some $\tilde{W}\subseteq (R^\C)^{\dim_k(V)}$.
Furthermore, as $\eta_V$ is an isomorphism, the restriction $p|_{\iota_R(\tilde{W})}: \iota_R(\tilde{W})\to \iota_R(V)$ is induced by some homomorphism $f:\tilde{W}\to V$ (cf. Remark \ref{rem:iota-full}). By exactness of $\iota_R$, we finally obtain $N=\im(\iota_R(f))=\iota_R(\im(f))=\iota_R(W)$ for $W:= \im(f)$.
\setcounter{enumi}{2}
\item The proof that $\varepsilon_M:\iota_R(M^{\C_R})\to M$ is a monomorphism is the same as in Prop.~\ref{prop:adjointness}.
\end{enumerate}
\end{proof}

\begin{lemma}\label{lemma:when-eps-is-iso}
Let $R$ be a simple $\C$-algebra. Then for $N\in \C_R$, the morphism $\eps_N$ is an isomorphism if and only if $N$ is isomorphic to $\iota_R(V)$ for some $V\in \Mod_{R_\C}$.
\end{lemma}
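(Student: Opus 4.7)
The plan is to use only formal properties of the adjunction $\iota_R\dashv(-)^{\C_R}$ together with part~(ii) of Proposition~\ref{prop:on-iota-r}, whose hypotheses are in force here because $R$ is a simple $\C$-algebra (as remarked just after that proposition).

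The ``only if'' direction is trivial: if $\eps_N$ is an isomorphism, then $N\cong \iota_R(N^{\C_R})$, so $V:=N^{\C_R}$ does the job.

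For the ``if'' direction I would proceed as follows. Let $\phi\colon \iota_R(V)\xrightarrow{\sim} N$ be an isomorphism in $\C_R$. Naturality of the counit gives a commutative square
\[
\xymatrix{
\iota_R\bigl(\iota_R(V)^{\C_R}\bigr) \ar[r]^(.55){\iota_R(\phi^{\C_R})} \ar[d]_{\eps_{\iota_R(V)}} & \iota_R(N^{\C_R}) \ar[d]^{\eps_N} \\
\iota_R(V) \ar[r]^{\phi}_{\sim} & N
}
\]
in which the top horizontal arrow is $\iota_R$ applied to the isomorphism $\phi^{\C_R}$, hence is itself an isomorphism. Consequently it suffices to show that $\eps_{\iota_R(V)}$ is an isomorphism for every $V\in \Mod_{R^\C}$. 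At this point I would invoke the triangle identity for the adjunction,
\[
\eps_{\iota_R(V)}\circ \iota_R(\eta_V)=\id_{\iota_R(V)}.
\]
By Proposition~\ref{prop:on-iota-r}(ii), the unit $\eta_V\colon V\to \iota_R(V)^{\C_R}$ is an isomorphism, so its image $\iota_R(\eta_V)$ under the exact functor $\iota_R$ is an isomorphism. The triangle identity then forces $\eps_{\iota_R(V)}=\iota_R(\eta_V)^{-1}$, which is an isomorphism, as required.

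The only substantive input is Proposition~\ref{prop:on-iota-r}(ii) for simple $R$; the rest is a purely formal chase through a naturality square and one triangle identity. In particular, no separate argument is needed to upgrade the known monomorphism $\eps_N$ (Proposition~\ref{prop:on-iota-r}(iii)) to an isomorphism, because the triangle identity already supplies a two-sided inverse.
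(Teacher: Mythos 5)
Your proof is correct and follows essentially the same route as the paper: both reduce to $\eps_{\iota_R(V)}$ via the isomorphism $N\cong\iota_R(V)$, then combine the triangle identity of the adjunction with Prop.~\ref{prop:on-iota-r}(ii) (valid since $R$ is simple) to conclude. The only difference is that you spell out the naturality square that the paper leaves implicit in its final ``Hence, $\eps_N$ is an isomorphism.''
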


\begin{proof}
If $\eps_N$ is an isomorphism, then $N\isom \iota_R(V)$ for $V:=N^{\C_R}$. On the other hand, let $N\isom \iota_R(V)$ for some
$V\in \Mod_{R_\C}$. Since $\iota_R(\eta_V)\circ \eps_{\iota_R(V)}=\id_{\iota_R(V)}$ (cf.~\cite[Ch.~IV, Thm.~1]{sml:cwm}) and $\eta_V$ is an isomorphism, $\eps_{\iota_R(V)}$ is an isomorphism. Hence, $\eps_N$ is an isomorphism.
\end{proof}

\begin{prop}\label{prop:subcat-of-trivial-modules}
Let $R$ be a simple $\C$-algebra. Then the full subcategory of $\C_R$ consisting of all $N\in \C_R$ such that $\eps_N$ is an isomorphism is a monoidal subcategory of $\C_R$ and is closed under taking direct sums, subquotients, small inductive limits, and duals of dualizable objects in $\C_R$.
\end{prop}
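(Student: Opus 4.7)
The proof rests on Lemma~\ref{lemma:when-eps-is-iso}, which identifies the full subcategory in question---call it $\mathcal{T}$---with the essential image of the functor $\iota_R : \Mod_{R^\C} \to \C_R$. By Proposition~\ref{prop:on-iota-r}(ii) together with Remark~\ref{rem:iota-full}(ii), $\iota_R$ is fully faithful, and as the base change of the tensor functor $\iota$ it is also a tensor functor with right adjoint $()^{\C_R}$. The strategy is to verify each closure property by transporting the relevant construction through $\iota_R$.

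The monoidal and colimit properties are immediate. The unit of $\C_R$ is $R = \iota_R(R^\C) \in \mathcal{T}$, and $\iota_R(V) \otimes_R \iota_R(W) \cong \iota_R(V \otimes_{R^\C} W)$ because $\iota_R$ is a tensor functor, so $\mathcal{T}$ is monoidal. Direct sums and small inductive limits are preserved by $\iota_R$ since it is a left adjoint, so $\mathcal{T}$ is closed under both. Closure under subobjects and quotients is exactly the content of Proposition~\ref{prop:on-iota-r}(i).

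The delicate case is closure under duals. Suppose $N = \iota_R(V) \in \mathcal{T}$ is dualizable in $\C_R$ with dual $N^\vee$. Combining the duality adjunction with $\iota_R \dashv ()^{\C_R}$ gives
\[
(N^\vee)^{\C_R} \;=\; \Mor_{\C_R}(R, N^\vee) \;\cong\; \Mor_{\C_R}(N, R) \;\cong\; \Hom_{R^\C}(V, R^\C) \;=:\; V^{\ast},
\]
so Proposition~\ref{prop:on-iota-r}(iii) yields a monomorphism $\eps_{N^\vee} : \iota_R(V^{\ast}) \hookrightarrow N^\vee$; the goal is to upgrade it to an isomorphism.

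My plan is to descend the duality data $(\ev_N, \delta_N)$ through $\iota_R$ in order to exhibit $V^{\ast}$ as a dual of $V$ in $\Mod_{R^\C}$. Composing $\ev_N$ with $\id_N \otimes \eps_{N^\vee}$ produces a morphism
\[
\iota_R(V \otimes_{R^\C} V^{\ast}) \;=\; N \otimes_R \iota_R(V^{\ast}) \;\longrightarrow\; R \;=\; \iota_R(R^\C),
\]
which by full faithfulness of $\iota_R$ comes from the natural evaluation pairing $V \otimes_{R^\C} V^{\ast} \to R^\C$. The hard part---and the step I expect to be the main obstacle---is producing the coevaluation: one must show that $\delta_N : R \to N^\vee \otimes_R N$ factors through the inclusion $\iota_R(V^{\ast}) \otimes_R N = \iota_R(V^{\ast} \otimes_{R^\C} V) \hookrightarrow N^\vee \otimes_R N$ induced by $\eps_{N^\vee} \otimes \id_N$. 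Applying $()^{\C_R}$ and identifying $(N^\vee \otimes_R N)^{\C_R} \cong \End_{R^\C}(V)$ via the duality adjunction, this amounts to lifting $\id_V \in \End_{R^\C}(V)$ to $V^{\ast} \otimes_{R^\C} V$, i.e.~to the existence of a dual basis for $V$. Once this lifting is secured, the triangle identities for $(N, N^\vee)$ in $\C_R$ descend, via full faithfulness of $\iota_R$, to triangle identities for $(V, V^{\ast})$ in $\Mod_{R^\C}$; hence $V$ is dualizable with dual $V^{\ast}$, so $\iota_R(V^{\ast})$ is itself a dual of $N$, and by uniqueness of duals $N^\vee \cong \iota_R(V^{\ast}) \in \mathcal{T}$.
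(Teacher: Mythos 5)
Everything up to the duals case coincides with the paper's own (very brief) argument: by Lemma~\ref{lemma:when-eps-is-iso} the subcategory is the essential image of $\iota_R$, and closure under the tensor structure, direct sums, inductive limits and subquotients follows from $\iota_R$ being a fully faithful exact tensor functor together with Prop.~\ref{prop:on-iota-r}(i); that part of your proposal is fine. The genuine gap is in the duals case, and you name it yourself: the factorization of the coevaluation $\delta_N$ through $\iota_R(V^{\ast})\otimes_R N$, equivalently the lifting of $\id_V$ to $V^{\ast}\otimes_{R^\C}V$, is announced as ``the main obstacle'' and then simply assumed (``once this lifting is secured\dots''). Since $R$ is simple, $R^\C$ is a field, so such a lifting exists if and only if $\dim_{R^\C}V<\infty$; and for finite-dimensional $V$ the whole duals statement is trivial ($N\cong R^n$, whose dual is again $R^n$). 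So this finiteness is the entire content of the case you leave open, and it is not a formal consequence of the adjunction or of full faithfulness: it genuinely uses the dualizability of $N$ in $\C_R$ together with the simplicity of $R$. As written, the proposal therefore does not prove closure under duals.

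To close it, one can argue as follows. Because $N$ is dualizable, $N^\vee\otimes_R-$ preserves inductive limits, so $N^\vee\otimes_R N\cong N^\vee\otimes_R\iota_R(V)=\varinjlim_W N^\vee\otimes_R\iota_R(W)$, the limit running over the finite-dimensional subspaces $W\subseteq V$, each $N^\vee\otimes_R\iota_R(W)$ being a split subobject. The coevaluation $\delta_N$ is a morphism out of the simple object $R$ of $\C_R$, so its image is zero or simple and lies in $N^\vee\otimes_R\iota_R(W)$ for some finite-dimensional $W$ (this is the $\C_R$-analogue of the isomorphism $\Mor_\C(N,M\otimes_k V)\cong\Mor_\C(N,M)\otimes_k V$ recorded in Section~\ref{sec:setup}, and needs the same kind of justification). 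The triangle identity $(\ev_N\otimes\id_N)\circ(\id_N\otimes\delta_N)=\id_N$ then shows that $\id_N$ factors through the subobject $\iota_R(W)\subseteq\iota_R(V)\cong N$, so $\iota_R(W)=\iota_R(V)$ and, by faithful exactness of $\iota_R$, $W=V$; thus $V$ is finite-dimensional, hence dualizable, $\iota_R(V^\vee)$ is a dual of $N$ because $\iota_R$ is a tensor functor, and $N^\vee\cong\iota_R(V^\vee)$ lies in the subcategory by Lemma~\ref{lemma:when-eps-is-iso}. For comparison, the paper disposes of the whole proposition in one line (Lemma~\ref{lemma:when-eps-is-iso}, Prop.~\ref{prop:on-iota-r}(i), and the fact that $\iota_R$ is an additive exact tensor functor), treating the duals case as immediate; you have correctly isolated the point that this one-liner glosses over, but a complete proof must then supply that point rather than record it as an expected obstacle.
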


\begin{proof}
Using the previous lemma, this follows directly from Prop.~\ref{prop:on-iota-r}(i), and the fact that $\iota_R$ is an additive exact tensor functor.
\end{proof}

\section{Solution rings and Picard-Vessiot rings}\label{sec:solution-rings}

From now on we assume that $\C$ satisfies all conditions (C1), (C2), (F1) and (F2).

\begin{lemma}\label{lemma:dualizables-are-projective-of-finite-rank}
Let $M\in \C$ be dualizable. Then $\upsilon(M)$ is a finitely generated locally free $\O_{\X}$-module of constant rank.
\end{lemma}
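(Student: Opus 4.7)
The plan is to combine two ingredients: that $\upsilon$ is a tensor functor, so dualizability in $\C$ transports to dualizability in $\Qcoh(\X)$; and the standard characterization of dualizable quasi-coherent sheaves as finite locally free. Constancy of the rank will then be extracted from the categorical trace of $\id_M$.

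First I would observe that, because $\upsilon$ is an additive tensor functor with $\upsilon(\1) = \O_\X$, applying $\upsilon$ to the evaluation $\ev_M \colon M \otimes M^\vee \to \1$, the coevaluation $\delta_M \colon \1 \to M^\vee \otimes M$, and the two triangle identities exhibits $\upsilon(M^\vee)$ together with $\upsilon(\ev_M)$ and $\upsilon(\delta_M)$ as a dual of $\upsilon(M)$ in $\Qcoh(\X)$. Then I would invoke the classical fact that a quasi-coherent sheaf on a scheme is dualizable if and only if it is finite locally free: this is a local statement, since on an affine open $\spec(A) \subseteq \X$ the sheaf $\upsilon(M)$ corresponds to a dualizable $A$-module together with its duality data, and dualizable modules over a commutative ring are precisely the finitely generated projective ones. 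Covering $\X$ by affines then yields that $\upsilon(M)$ is finite locally free, and in particular finitely generated.

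For the constant-rank assertion I would use the categorical trace $r := \ev_M \circ \tau \circ \delta_M \in \End_\C(\1) = k$, where $\tau \colon M^\vee \otimes M \to M \otimes M^\vee$ is the symmetry. Since $\upsilon$ is a symmetric tensor functor and preserves composition, $\upsilon(r)$ equals the trace of $\id_{\upsilon(M)}$, which on a finite locally free sheaf is well known to coincide with the rank function viewed as a global section of $\O_\X$. In other words, the rank function of $\upsilon(M)$ is the image of the single field element $r \in k$ under the ring map $k \to \End_{\Qcoh(\X)}(\O_\X) = \Gamma(\X, \O_\X)$ induced by $\upsilon$ on endomorphisms of $\1$. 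I expect the main obstacle to be precisely this final step: deducing from the fact that the locally constant, integer-valued rank function lies in the scalar image of the field $k$ that the rank must take the same value at every point of $\X$, which amounts to ruling out any non-trivial splitting of the rank across connected components being compatible with being the image of a single element of $k$.
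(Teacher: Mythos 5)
The first half of your argument (dualizability transports along the tensor functor $\upsilon$, and dualizable objects of $\Qcoh(\X)$ are exactly the finite locally free sheaves) is exactly what the paper does, and is fine. The constant-rank step, however, has a genuine gap, and it is precisely the step you flag: the categorical trace $r=\ev_M\circ\tau\circ\delta_M\in k$ maps under $\upsilon$ to the trace of $\id_{\upsilon(M)}$, which on a component where the local rank is $d$ is the section $d\cdot 1$. Comparing with the image of the single element $r\in k$ at two points $x,y$ (via the injections $k\hookrightarrow\kappa(x)$, $k\hookrightarrow\kappa(y)$) only yields $d_x\cdot 1_k=d_y\cdot 1_k$ in $k$, i.e.\ $d_x\equiv d_y \pmod{\ch(k)}$. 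In characteristic zero this does give $d_x=d_y$, so your route is a legitimate alternative there; but the paper's setting explicitly includes positive characteristic (iterative differential modules, etc.), and in characteristic $p$ the trace of the identity cannot distinguish ranks differing by a multiple of $p$, so the argument as proposed cannot be completed. Note also that the statement is false without the simplicity of $\1$ (take $\C=\Qcoh(\X)$ with $\X$ disconnected and $\upsilon=\id$), so any correct proof must use simplicity in an essential way; in your proposal it enters only through $k$ being a field, which, as just explained, is not enough.

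The paper's proof uses simplicity of $\1$ differently and characteristic-freely: let $d$ be the maximal local rank of $\upsilon(M)$ and set $\Lambda:=\Lambda^d(M)$, which is non-zero by the choice of $d$ (and faithfulness of $\upsilon$). The evaluation $\ev_\Lambda:\Lambda\otimes\Lambda^\vee\to\1$ is then a non-zero morphism into the simple object $\1$, hence an epimorphism; applying the exact tensor functor $\upsilon$ shows $\upsilon(\Lambda)\otimes_{\O_\X}\O_{\X,x}\ne 0$ for every point $x$, so every local rank of $\upsilon(M)$ is at least $d$, i.e.\ the rank is constant. If you want to salvage your approach, you would have to replace the trace by such an argument detecting the top exterior power (or otherwise exploit simplicity of $\1$ at the level of morphisms to $\1$), rather than an invariant valued in $k$.
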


\begin{proof}
If $M\in \C$ is dualizable, then $\upsilon(M)$ is dualizable in $\Qcoh(\X)$, since  $\upsilon$ is a tensor functor, and tensor functors map dualizable objects to dualizable objects (and their duals to the duals of the images).  By \cite[Prop.~2.6]{pd:ct}, dualizable objects in $\Qcoh(\X)$ are exactly the finitely generated locally free $\O_\X$-modules. Hence, $\upsilon(M)$ is finitely generated and locally free whenever $M$ is dualizable.

To see that the rank is constant, let $d\in\NN$ be the maximal local rank of $\upsilon(M)$, and consider the $d$-th exterior power $\Lambda:=\Lambda^d(M)\in \C$ which is non-zero by the choice of $d$. Hence, the evaluation morphism $\ev_{\Lambda}:\Lambda\otimes \Lambda^\vee\to \1$ is non-zero. Since $\1$ is simple, and the image of $ \ev_{\Lambda}$ is a subobject of $\1$, the morphism $\ev_{\Lambda}$ is indeed an epimorphism. Hence the evaluation
$$\ev_{\upsilon(\Lambda)}=\upsilon( \ev_{\Lambda}):\upsilon(\Lambda)\otimes_{\O_\X} \upsilon(\Lambda)^\vee\to \O_\X$$
is surjective which implies that $\upsilon(\Lambda)\otimes_{\O_\X} \O_{\X,x}\ne 0$ for any 
point $x$ of $\X$. But this means that any local rank of $\upsilon(M)$ is at least $d$, i.e.~$\upsilon(M)$ has constant rank $d$.
\end{proof}

\begin{rem}\label{rem:dualizables-are-projective}
With respect to the previous lemma, condition (F2) implies that if $\upsilon(M)$ is finitely generated for some $M\in\C$, then $\upsilon(M)$ is even locally free and of constant rank. This also implies the following:\\
If $M$ is dualizable, then $\upsilon(M)$ is finitely generated and locally free. Further, for
every epimorphic image $N$ of $M$, the ${\O_\X}$-module $\upsilon(N)$ is also finitely generated and hence, locally free. But then for any subobject $N'\subseteq M$ the sequence $0\to \upsilon(N')\to \upsilon(M)\to \upsilon(M/N')\to 0$ is split exact, since $\upsilon(M/N')$ as an epimorphic image is locally free.
Therefore $\upsilon(N')$ is also a quotient of $\upsilon(M)$, in particular $\upsilon(N')$ is finitely generated and locally free.\\
So given a dualizable $M\in \C$, all subquotients of finite direct sums of objects $M^{\otimes n}\otimes (M^\vee)^{\otimes m}$ ($n,m\in \NN$) are dualizable. Hence, the strictly full tensor subcategory of $\C$ generated by $M$ and $M^\vee$ -- which is exactly the full subcategory of $\C$ consisting of all objects isomorphic to subquotients of finite direct sums of objects $M^{\otimes n}\otimes (M^\vee)^{\otimes m}$ ($n,m\in \NN$) -- 
 is a rigid abelian tensor category and will be denoted by $\tenscat{M}$. Furthermore by definition, $\upsilon$ is a fibre functor and therefore $\tenscat{M}$ is even a Tannakian category (cf.~\cite[Section 2.8]{pd:ct}).

By \cite[Cor.~6.20]{pd:ct}, there exists a finite extension $\tilde{k}$ of $k$ and a fibre functor $\omega:\tenscat{M}\to \vect_{\tilde{k}}$. In view of Thm.~\ref{thm:pv-rings-equiv-to-fibre-functors} in Section \ref{sec:pv-rings-and-fibre-functors}, this implies that there is a Picard-Vessiot ring for $M$ over $\tilde{k}$.

We will see later (cf.~Cor.~\ref{cor:properties-of-simple-minimal-solution-rings}) that for every simple minimal solution ring $R$, the field $R^\C=\End_{\C_R}(R)$ is a finite field extension of $k$.
\end{rem}

\begin{defn}
Let $M\in \C$.\\ 
A \emph{solution ring} for $M$ is a $\C$-algebra $R$ such that the morphism
$$\varepsilon_{M_R}: \iota_R\left( (M_R)^{\C_R} \right) \to M_R=R\otimes M$$
is an isomorphism.

A \emph{Picard-Vessiot ring} for $M$ is a minimal solution ring $R$ which is a simple $\C$-algebra, and satisfies $ R^\C:=\End_{\C_R}(R)=k$. Here, \emph{minimal} means that for any solution ring $\tilde{R}\in \C$ that admits a monomorphism of $\C$-algebras to $R$, this monomorphism is indeed an isomorphism.
\end{defn}

\begin{rem}
Comparing with the differential setting, $(M_R)^{\C_R}$ is just the so called solution space $(R\otimes_F M)^\partial$ of $M$ over $R$, and $\eps_{M_R}$ is the canonical homomorphism 
$R\otimes_{R^\partial} (R\otimes_F M)^\partial \to R\otimes_F M$.\\
When $R$ is a simple $\C$-algebra (i.e.~in the differential setting a simple differential ring), then by Prop.\ref{prop:on-iota-r}(iii), $\eps_{M_R}$ is always a monomorphism. Hence, for a simple $\C$-algebra $R$, the condition for being a solution ring means that the solution space is as large as possible, or in other words that $R\otimes M$ has a basis of constant elements, i.e.~is a trivial differential module over $R$.
\end{rem}

\begin{prop}\label{prop:image-of-solution-rings}
Let $R$ be a solution ring for some dualizable $M\in \C$, and let $f:R\to R'$ be an epimorphism of $\C$-algebras.
Assume either that $R'$ is a simple $\C$-algebra or that $(R\otimes M)^\C$ is a free $R^\C$-module. Then $R'$ is a solution ring for $M$ as well. 
\end{prop}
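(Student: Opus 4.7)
The plan is to transport the isomorphism $\varepsilon_{M_R}\colon \iota_R(V)\xrightarrow{\sim} M_R$, where $V:=(M_R)^{\C_R}=(R\otimes M)^\C$, along the epimorphism $f\colon R\to R'$ by base change, and then compare the result with the counit $\varepsilon_{M_{R'}}$ for $W:=(M_{R'})^{\C_{R'}}$. Applying $R'\otimes_R(-)$ to the given iso produces an iso $R'\otimes_R \iota_R(V)\xrightarrow{\sim} M_{R'}$. Because $\iota$ is a colimit-preserving tensor functor one has
$$R'\otimes_R \iota_R(V) \;=\; R'\otimes_{\iota(R^\C)}\iota(V) \;\cong\; \iota_{R'}(V\otimes_{R^\C}{R'}^\C),$$
the last step using the monoidality $\iota({R'}^\C\otimes_{R^\C}V)\cong \iota({R'}^\C)\otimes_{\iota(R^\C)}\iota(V)$. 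The map $f\otimes\id_M\colon M_R\to M_{R'}$ induces an $R^\C$-linear map $V\to W$ and hence, by ${R'}^\C$-linear extension, a map $\varphi\colon V\otimes_{R^\C}{R'}^\C\to W$. By naturality of the various counits the above iso factors as
$$\iota_{R'}(V\otimes_{R^\C}{R'}^\C) \xrightarrow{\iota_{R'}(\varphi)} \iota_{R'}(W) \xrightarrow{\varepsilon_{M_{R'}}} M_{R'},$$
so $\varepsilon_{M_{R'}}$ is at least an epimorphism.

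If $R'$ is simple, then Prop.~\ref{prop:on-iota-r}(iii) applies to $\iota_{R'}$ (per the remark following it), and $\varepsilon_{M_{R'}}$ is also a monomorphism, hence an isomorphism. If instead $V$ is free over $R^\C$, write $V\cong(R^\C)^{(I)}$; I first argue that $I$ is finite. From $\iota_R(V)\cong R^{(I)}$ and $\iota_R(V)\cong R\otimes M$, applying the faithful exact tensor functor $\upsilon$ gives $\upsilon(R)^{(I)}\cong \upsilon(R)\otimes_{\O_\X}\upsilon(M)$. Since $\upsilon(M)$ is locally free of constant finite rank $n$ by Lemma~\ref{lemma:dualizables-are-projective-of-finite-rank}, comparing fibres at any point where $\upsilon(R)$ does not vanish forces $|I|=n$. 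Hence $M_R\cong R^n$ in $\C_R$ and $M_{R'}\cong{R'}^n$ in $\C_{R'}$. Using that $\iota_{R'}$ and $()^{\C_{R'}}$ both preserve finite direct sums and that $\varepsilon_{R'}\colon\iota_{R'}({R'}^\C)\to R'$ is the canonical identity, one concludes that $\varepsilon_{M_{R'}}$ is an isomorphism.

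The main obstacle I expect is the bookkeeping for the identification $R'\otimes_R\iota_R(V)\cong\iota_{R'}(V\otimes_{R^\C}{R'}^\C)$ and for the commutativity of the triangle above, which both rest on the interaction of $\iota$, $\iota_R$ and $\iota_{R'}$ with the induced scalar extension $R^\C\to{R'}^\C$. Once this is set up carefully, the simple case is immediate from Prop.~\ref{prop:on-iota-r}(iii), and the free case reduces to the finite-rank verification for $V$ via $\upsilon$.
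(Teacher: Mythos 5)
Your argument is correct and follows essentially the same route as the paper: transport the isomorphism $\eps_{M_R}$ along $f$ to see that $\eps_{M_{R'}}$ is an epimorphism, then conclude in the simple case via Prop.~\ref{prop:on-iota-r}(iii) and in the free case by trivializing $M_R\cong R^n$, base changing to an isomorphism $R'^n\cong R'\otimes M$, and invoking naturality of $\eps$ together with $\eps_{R'^n}$ being an isomorphism. Your explicit finiteness-of-rank argument via $\upsilon$ just fills in what the paper states in the remark following the proposition.
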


\begin{rem}
 If $(R\otimes M)^\C$ is a free $R^\C$-module, then it is automatically free of finite rank, and the rank is the same as the global rank of $\upsilon(M)$ as
$\O_\X$-module which exists by Lem.~\ref{lemma:dualizables-are-projective-of-finite-rank}.
\end{rem}

\begin{proof}[Proof of Prop.~\ref{prop:image-of-solution-rings}]
 As $f:R\to R'$ is an epimorphism and $M$ is dualizable, $f\otimes \id_M:R\otimes M\to R'\otimes M$ is an epimorphism, too.
As the diagram
$$\xymatrix@C+20pt{
 \iota_R\left( (R\otimes M)^\C\right) \ar[r]^{\eps_{M_R}} \ar[d] &
M_R=R\otimes M \ar@{->>}[d]^{f\otimes \id_M} \\
\iota_{R'}\left( (R'\otimes M)^\C\right) \ar[r]^{\eps_{M_{R'}}}  &
M_{R'}=R'\otimes M
}$$
commutes and $\eps_{M_R}$ is an isomorphism by assumption on $R$, the morphism $\eps_{M_{R'}}$ is an epimorphism.

If $R'$ is simple, then by Prop.~\ref{prop:on-iota-r}(iii) the morphism $\eps_{M_{R'}}$ is a monomorphism, hence an isomorphism.
Therefore, $R'$ is a solution ring.

Assume now, that $(R\otimes M)^\C$ is a free $R^\C$-module of rank $n$. Then $\iota_R(R\otimes M)^\C\cong \iota_R\left((R^\C)^n\right)=R^n$.
Composing with $\eps_{M_R}$ leads to an isomorphism $R^n\xrightarrow{\cong}R\otimes M$. We therefore obtain an isomorphism $\alpha:(R')^n\to R'\otimes M$ by
tensoring with $R'$.
Applying the natural transformation $\eps$ to this isomorphism, we get a commutative square
$$\xymatrix@C+20pt{
R'^n=\iota_{R'}\bigl( (R'^n)^\C\bigr) \ar[r]^{\iota_{R'}(\alpha^\C)}_{\cong} \ar[d]^{\eps_{R'^n}}_{\cong} &
\iota_{R'}\bigl( (R'\negotimes M)^\C\bigr) \ar[d]^{\eps_{M_{R'}}} \\
R'^n \ar[r]^{\alpha}_{\cong} & R'\negotimes M,
}$$
which shows that $\eps_{M_{R'}}$ is an isomorphism, too.
\end{proof}

%
%

\begin{thm}\label{thm:exists-sol-ring}
Let $M\in \C$ be dualizable. Then there exists a non-zero solution ring for~$M$. 
\end{thm}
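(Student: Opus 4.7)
The plan is to construct a nonzero solution ring $R$ for $M$ directly, by an algebra-in-$\C$ construction mimicking the classical formation of $F[X_{ij},\det(X)^{-1}]$ in differential Galois theory.

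Let $n$ denote the (constant) rank of $\upsilon(M)$ supplied by Lemma~\ref{lemma:dualizables-are-projective-of-finite-rank}, and set $L:=\Lambda^n M$, which is again dualizable with $\upsilon(L)$ of rank $1$. Using cocompleteness of $\C$, form the commutative symmetric $\C$-algebra
$$B\;:=\;\Sym_\C\bigl((M^\vee)^{\oplus n}\oplus L^\vee\bigr).$$
The $n$ canonical generating morphisms $M^\vee\hookrightarrow B$ correspond, by dualizability of $M$, to morphisms $\sigma_j:\1\to B\otimes M$ ($j=1,\dots,n$) that assemble into a $B$-linear morphism $\Sigma:B^{\oplus n}\to B\otimes M$ in $\C_B$; analogously the inclusion $L^\vee\hookrightarrow B$ gives, via dualizability of $L$, a morphism $\tau:\1\to B\otimes L^\vee$. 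Taking the top exterior power of $\Sigma$ produces $\wedge^n\Sigma:B\to B\otimes L$, i.e.\ a morphism $d:\1\to B\otimes L$. Combining $d$ and $\tau$ via the multiplication of $B$ and the evaluation pairing $\ev_L:L\otimes L^\vee\to\1$ yields a morphism $\rho:\1\to B$, and I define $R:=B/I$ where $I$ is the $\C$-ideal generated by $\rho-u_B$.

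On $R$, the relation $\rho=1$ forces $\wedge^n\Sigma_R:R\to R\otimes L$ to be an isomorphism in $\C_R$ (with inverse induced by $\tau$). Since $\upsilon(R^{\oplus n})$ and $\upsilon(R\otimes M)$ are locally free $\upsilon(R)$-modules of the same rank $n$, and a morphism between such modules is an isomorphism if and only if its top exterior power is, the faithfulness and exactness of $\upsilon$ force $\Sigma_R:R^{\oplus n}\to R\otimes M$ itself to be an isomorphism in $\C_R$. Consequently $R\otimes M\cong R^{\oplus n}=\iota_R\bigl((R^\C)^{\oplus n}\bigr)$, and the naturality of the counit $\varepsilon$ (together with $\varepsilon_{R^{\oplus n}}=\id$) yields that $\varepsilon_{M_R}$ is an isomorphism, so $R$ is a solution ring for $M$.

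The main obstacle is showing $R\neq 0$. For this I apply the faithful, exact, colimit-preserving tensor functor $\upsilon$ to the entire construction. Since $\upsilon$ sends symmetric algebras to symmetric algebras, duals to duals and quotients to quotients, $\upsilon(R)$ is, in $\Qcoh(\X)$, the analogous quotient of $\Sym_{\O_\X}\bigl(\upsilon(M^\vee)^{\oplus n}\oplus\upsilon(L)^\vee\bigr)$ by the same determinant relation. Locally on an affine open $U\subseteq\X$ on which both $\upsilon(M)$ and $\upsilon(L)$ trivialize, this recovers the coordinate ring of $\GL_{n,U}$, which is nonzero (and indeed faithfully flat over $\O_U$). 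Hence $\upsilon(R)\neq 0$, and by faithfulness of $\upsilon$ this forces $R\neq 0$, completing the argument.
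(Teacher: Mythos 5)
Your construction follows the same overall strategy as the paper's proof of Thm.~\ref{thm:exists-sol-ring}: build a universal ``fundamental matrix'' algebra as a quotient of a symmetric algebra on duals, verify nonvanishing and the triviality of $M_R$ by applying $\upsilon$ and computing on an affine open trivializing $\upsilon(M)$, and conclude that $\eps_{M_R}$ is an isomorphism from the naturality of the counit together with $\eps_{R^{\oplus n}}$ being an isomorphism. The difference is the presentation of the universal ring: the paper adjoins a full inverse matrix (generators $\left(M\otimes(\1^n)^\vee\right)\oplus\left(\1^n\otimes M^\vee\right)$ with the relation coming from $\ev_M$ against $\delta_{\1^n}$, i.e.\ $XY=I$, so that locally $\upsilon(U)(\U)=S[X,X^{-1}]$), whereas you adjoin only an inverse of the determinant ($y\,\det X=1$). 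Two points in your version need attention. First, a variance slip: the second generator of your symmetric algebra should be $L=\Lambda^n M$ itself, not $L^\vee$; it is $\Mor_\C(L,B)$ that is naturally isomorphic to $\Mor_\C(\1,B\otimes L^\vee)$, so only a generating copy of $L$ yields the morphism $\tau:\1\to B\otimes L^\vee$ that can be contracted against $d:\1\to B\otimes L$ via $\ev_L$ to give the scalar $\rho$ -- as written, the inclusion $L^\vee\hookrightarrow B$ gives a morphism $\1\to B\otimes L$ of the same variance as $d$, and no pairing is available. Second, your route leans on exterior powers of morphisms of $B$-modules in $\C_B$ (the identifications $\Lambda^n_B(B^{\oplus n})\cong B$ and $\Lambda^n_B(B\otimes M)\cong B\otimes\Lambda^n M$, compatibility of $\Lambda^n$ with $\upsilon$ and with base change, and the criterion that a morphism of locally free modules of rank $n$ is invertible iff its top exterior power is); this goes beyond what axioms (C1)--(F2) explicitly provide, and the categorical definition of $\Lambda^n$ needs care in small positive characteristic. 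The paper uses $\Lambda^d(M)\in\C$ only once, in Lemma~\ref{lemma:dualizables-are-projective-of-finite-rank}, and its relation $XY=I$ deliberately sidesteps this machinery at the cost of $n^2$ extra generators. With the generator corrected to $L$ and the exterior-power formalism either justified in your setting or replaced by the paper's full-inverse-matrix relation, your argument is sound and produces locally the same ring $S[X,\det(X)^{-1}]$.
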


\begin{proof}
We show the theorem by explicitly constructing a solution ring. This construction is motivated by the Tannakian point of view in \cite{pd-jsm:tc} and by Section 3.4 in   \cite{ya:dnctgdd}.\\
Let $n:={\rm rank}(\upsilon(M))$ be the global rank of the $\O_\X$-module $\upsilon(M)$ which exists by Lemma \ref{lemma:dualizables-are-projective-of-finite-rank}. We then define $U$ to be the residue ring of $\Sym\Bigl( \left(M\otimes (\1^n)^\vee\right) \oplus \left(\1^n\otimes M^\vee\right) \Bigr)$ subject to the ideal generated by the image of the morphism
\begin{eqnarray*}
(-\ev, \id_{M}\otimes \delta_{\1^n}\otimes \id_{M^\vee}):M\otimes M^\vee &\to & \1\oplus \left(  M\otimes  (\1^n)^\vee \otimes \1^n \otimes M^\vee\right)\\ &\subset& \Sym\Bigl( \left(M\otimes (\1^n)^\vee\right) \oplus \left(\1^n\otimes M^\vee\right) \Bigr) .
\end{eqnarray*}

First we show that $U\ne 0$ by showing $\upsilon(U)\ne 0$. By exactness of $\upsilon$, the ring $\upsilon(U)$ is given as the residue ring of $\Sym\left( (\upsilon(M) \otimes_{\O_\X} (\O_{\!\X}^{\,n})^\vee) \oplus
(\O_{\!\X}^{\,n}\otimes_{\O_\X} \upsilon(M)^\vee)\right)$ subject to the ideal generated by the image of $(-\ev_{\upsilon(M)}, \id\otimes \delta_{\O_{\!\X}^{\,n}}\otimes \id)$.

Let $\U=\spec(S)\subseteq \X$ be an affine open subset such that $\tilde{M}:=\upsilon(M)(\U)$ is free over $S$. Let $\{b_1,\ldots,b_n\}$ be a basis of $\tilde{M}$ and $b_1^\vee,\ldots, b_n^\vee\in \tilde{M}^\vee$ the corresponding dual basis. 
Then $\upsilon(U)(\U)$ is generated by $x_{ij}:=b_i\otimes e_j^\vee\in \tilde{M} \otimes (S^{n})^\vee$ and $y_{ji}:=e_j\otimes b_i^\vee\in S^{n}\otimes (\tilde{M})^\vee$ for $i,j=1,\ldots, n$, where $\{e_1,\ldots, e_n\}$ denotes the standard basis of $S^n$ and $\{e_1^\vee,\ldots, e_n^\vee\}$ the dual basis.
The relations are generated by
$$b_k^\vee(b_i)=\ev_{\tilde{M}}(b_i\otimes b_k^\vee)=
(\id_{\tilde{M}}\otimes \delta_{S^n}\otimes \id_{\tilde{M}^\vee})(b_i\otimes b_k^\vee)=
\sum_{j=1}^n (b_i\otimes e_j^\vee)\otimes (e_j\otimes b_k^\vee),$$ i.e.~$\delta_{ik}=\sum_{j=1}^n x_{ij}y_{jk}$ for all $i,k=1,\ldots, n$. This just means that the matrix $Y=(y_{jk})$ is the inverse of the matrix $X=(x_{ij})$. Hence $\upsilon(U)(\U)=S[X,X^{-1}]$ is the localisation of a polynomial ring over $S$ in $n^2$ variables.

For showing that $U$ is indeed a solution ring, we consider the following diagram
$$\xymatrix@R+10pt@C+30pt{
M  \ar[r]^{ \id_M\otimes \delta_{\1^n}} \ar[d]^{ \id_M\otimes \delta_M} & 
 (M \negotimes (\1^n)^\negvee)\negotimes \1^n \ar[r]^{ \text{incl.}\otimes \id_{\1^n}}  \ar[d]^{ \id\otimes \delta_M}  & 
 U\negotimes \1^n \ar[d]^{ \id\otimes \delta_M} \\
M\negotimes M^\negvee \negotimes M \ar[r]^(.4){\id_M\otimes \delta_{\1^n}\otimes \id}  \ar[d]^{ \ev_M\otimes \id_M} & 
(M\negotimes (\1^n)^\negvee)\negotimes (\1^n \negotimes M^\negvee)\negotimes M \ar[r]^(.55){\text{incl.}\otimes \id} \ar[d]^{ \mu_U\otimes \id_M}&
U\negotimes (\1^n\negotimes M^\negvee) \negotimes M \ar[d]^{\mu_U\otimes \id_M} \\
\1\negotimes M \ar[r]^{u_U\otimes \id_M} & 
U\negotimes M \ar[r]^{\id}& 
U\negotimes M.
}$$
It is easy to see that the upper left, upper right and lower right squares all commute. The lower left square also commutes by definition of $U$, since the difference of the two compositions in question is just $(-\ev_M,\id_{M}\otimes \delta_{\1^n}\otimes \id_{M^\vee})\otimes \id_M$.
Furthermore the composition of the two vertical arrows on the left is just the identity on $M$ by definition of the dual. Tensoring the big square with $U$ leads to the left square of the next diagram
$$\xymatrix@C+30pt{
U\otimes M \ar[r] \ar[d]^{\id} &
U\otimes U\otimes \1^n \ar[r]^{ \mu_U\otimes \id_{\1^n}} \ar[d]^{\id_U \otimes\alpha} &
U\otimes\1^n \ar[d]^{\alpha}\\
U\otimes M  \ar[r]^(.45){\id_U\otimes u_U\otimes \id_M}&
U\otimes U \otimes M \ar[r]^{ \mu_U\otimes\id_M}& 
U\otimes M\\
}$$
where $\alpha:=( \mu_U\otimes\id_M)\circ (\id\otimes \delta_M)$. The right square of this diagram also commutes, as is easily checked, and the composition in the bottom row is just the identity according to the constraints on the unit morphism $u_U$ and the multiplication map $\mu_U$. Hence, $\alpha:U\otimes \1^n\to U\otimes M$ is a split epimorphism in $\C$, and even in $\C_U$ (since the right square commutes). Since the rank of $\upsilon(U\otimes \1^n)=\upsilon(U)^n$ and the rank of $\upsilon(U\otimes M)$ as $\upsilon(U)$-modules are both $n$, the split epimorphism $\upsilon(\alpha)$ is in fact an isomorphism, i.e.~$\alpha$ is an isomorphism.

Applying the natural transformation $\eps$, we finally obtain the commutative square
$$\xymatrix@C+20pt{
U^n=\iota_U\bigl( (U \negotimes \1^n)^\C\bigr) \ar[r]^{\iota_U(\alpha^\C)}_{\cong} \ar[d]^{\eps_{U^n}}_{\cong} &
\iota_U\bigl( (U\negotimes M)^\C\bigr) \ar[d]^{\eps_{M_U}} \\
U^n=U\negotimes \1^n \ar[r]^{\alpha}_{\cong} & U\negotimes M,
}$$
which shows that $\eps_{M_U}$ is an isomorphism. Hence, $U$ is a solution ring for $M$.
\end{proof}

\begin{rem}\label{rem:universal-solution-ring}
In the case of difference or differential modules over a difference or differential field $F$, respectively, the ring $U$ constructed in the previous proof is just the usual universal solution algebra $F[X,\det(X)^{-1}]$ for a fundamental solution matrix $X$ having indeterminates as entries. We will therefore call $U$ the {\markdef universal solution ring} for $M$.\\
This is moreover justified by the following theorem which states that $U$ indeed satisfies a universal property.
\end{rem}

\begin{thm}\label{thm:univ-sol-ring}
Let 
 $R$ be a solution ring for $M$, such that $(R\otimes M)^\C$ is a free $R^\C$-module, and let $U$ be the solution ring for $M$ constructed  in Thm.~\ref{thm:exists-sol-ring}. Then there exists a morphism of $\C$-algebras $f:U\to R$. Furthermore, the image of $\iota(R^\C)\otimes U\xrightarrow{\eps_R\otimes f} R\otimes R\xrightarrow{\mu_R} R$  does not depend on the choice of $f$.
\end{thm}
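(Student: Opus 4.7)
I would divide the proof into three steps. \emph{Step 1: universal property of $U$.} By the universal property of the symmetric algebra together with tensor--hom adjunctions (using dualizability of $\1^n$ and $M$), a $\C$-algebra morphism $\Sym((M \otimes (\1^n)^\vee) \oplus (\1^n \otimes M^\vee)) \to T$ to an arbitrary $\C$-algebra $T$ is the same as a pair of $\C_T$-morphisms $\phi^\flat : T \otimes M \to T\otimes \1^n$ and $\psi^\flat : T \otimes \1^n \to T \otimes M$. Tracing the defining ideal shows that it imposes exactly $\phi^\flat \circ \psi^\flat = \id$. Applying $\upsilon$ and using that $\upsilon(M)$ is locally free of constant rank $n$ (Lem.~\ref{lemma:dualizables-are-projective-of-finite-rank}), $\upsilon(\psi^\flat)$ is a split injection between Zariski-locally free $\upsilon(T)$-modules of rank $n$, hence a stalkwise isomorphism by Nakayama; by faithfulness and exactness of $\upsilon$, $\psi^\flat$ is itself a $\C_T$-isomorphism. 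So $\Mor_{\C\text{-alg}}(U,T)$ is canonically in bijection with the set of $\C_T$-isomorphisms $T \otimes \1^n \xrightarrow{\sim} T \otimes M$.

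\emph{Step 2: existence of $f$.} Since $R$ is a solution ring, $\eps_{M_R} : \iota_R((R \otimes M)^\C) \xrightarrow{\sim} R \otimes M$ is an isomorphism in $\C_R$; a $\upsilon$-rank comparison (assuming $R \neq 0$, the case $R = 0$ being trivial) forces the free $R^\C$-module $(R \otimes M)^\C$ to have rank $n$. Any choice of basis then gives an isomorphism $R \otimes \1^n = \iota_R((R^\C)^n) \xrightarrow{\sim} R \otimes M$ in $\C_R$, which by Step~1 corresponds to the sought $\C$-algebra morphism $f : U \to R$.

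\emph{Step 3: independence of the image.} Set $\tilde U := \iota(R^\C) \otimes U$, a $\iota(R^\C)$-algebra by base change. By the universal property of Step~1 applied over $\iota(R^\C)$, its $\iota(R^\C)$-algebra morphisms to any $\iota(R^\C)$-$\C$-algebra $T$ correspond to $\C_T$-isomorphisms $T \otimes \1^n \xrightarrow{\sim} T \otimes M$. The composite $\mu_R \circ (\eps_R \otimes f) : \tilde U \to R$ is one such morphism, and corresponds to the isomorphism $\beta$ coming from $f$. A second choice $f'$ yields an isomorphism $\beta'$; the two differ by a $\C_R$-automorphism $\gamma := \beta^{-1} \circ \beta'$ of $R \otimes \1^n$, and via the free--forget adjunction combined with $(R \otimes \1^n)^{\C_R} \cong (R^\C)^n$, $\gamma$ is an element of $\GL_n(R^\C)$. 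By Yoneda, $\gamma$ induces a $\iota(R^\C)$-algebra automorphism $\sigma_\gamma$ of $\tilde U$ satisfying $\mu_R \circ (\eps_R \otimes f') = (\mu_R \circ (\eps_R \otimes f)) \circ \sigma_\gamma$; since $\sigma_\gamma$ is an automorphism, the two morphisms have the same image in $R$. The main obstacle is making this categorical change-of-basis rigorous -- in particular verifying that the Yoneda-induced $\sigma_\gamma$ really is a $\iota(R^\C)$-algebra automorphism, and that it intertwines the two composite morphisms to $R$ in the claimed way.
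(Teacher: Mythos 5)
Your proposal is correct, and the mathematical content coincides with the paper's proof; what differs is the packaging. The paper does not isolate your Step 1 as a universal property: it constructs $f$ directly from the trivialization $\alpha\colon R^n\xrightarrow{\;\sim\;}R\otimes M$ (which exists exactly as in your Step 2, since $R$ is a solution ring and $(R\otimes M)^\C$ is free of rank $n$) by converting $\alpha$ and $\beta=\alpha^{-1}$ through dualizability into morphisms $\1^n\otimes M^\vee\to R$ and $M\otimes(\1^n)^\vee\to R$, and then verifies the factorization through $U$ via the contragredient identity $\ev_{R^n}\circ(\beta\otimes\beta^\vee)=\ev_{M_R}$; that diagram chase is precisely what you compress into ``tracing the defining ideal''. (One small correction there: the relation $XY=I$ corresponds to $\psi^\flat\circ\phi^\flat=\id_{T\otimes M}$ rather than $\phi^\flat\circ\psi^\flat=\id$, but either one-sided identity together with your rank argument forces both maps to be mutually inverse isomorphisms, so your stated bijection $\Mor_{\C\text{-}\alg}(U,T)\simeq\{\text{isomorphisms }T\otimes\1^n\to T\otimes M\}$ is correct.) For independence of the image, the paper argues exactly as you do that the transition morphism $\beta_g\circ\alpha_f\in\Mor_{\C_R}(R^n,R^n)\simeq\Hom_{R^\C}((R^\C)^n,(R^n)^\C)$ is induced by an automorphism over $\iota(R^\C)$ --- your $\gamma\in\GL_n(R^\C)$ --- but then simply writes out the resulting factorization identities $\tilde{\alpha}_{f,\iota(R^\C)}=\tilde{\alpha}_{g,\iota(R^\C)}\circ((\beta_g\circ\alpha_f)\otimes\id)$ and concludes that each image factors through the other, whereas you transport $\gamma$ through the represented functor by Yoneda. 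The details you flag do go through: by the base-change adjunction $\Mor_{\iota(R^\C)\text{-}\alg}(\iota(R^\C)\otimes U,T)\simeq\Mor_{\C\text{-}\alg}(U,T)$, the algebra $\tilde U$ represents the restricted functor without re-proving Step 1; the restriction of $\mu_R\circ(\eps_R\otimes f)$ to $\1\otimes U$ is $f$ by the unit axiom, so this point of $\tilde U$ corresponds to $\alpha_f$; and $\gamma$ acts on the functor of trivializations by natural algebra-compatible automorphisms, so $\sigma_\gamma$ is an $\iota(R^\C)$-algebra automorphism intertwining the two maps. Your formulation buys a clean functor-of-points description of $U$ (implicit in the paper); the paper's version avoids the representability and Yoneda bookkeeping at the cost of writing the factorization identities explicitly.
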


\begin{proof}
By assumption, we have an isomorphism in $\C_R$:
$$\alpha: R^n\xrightarrow{\cong} \iota_R\left( (M_R)^{\C_R} \right)=R\otimes_{\iota(R^\C)} \iota\left((R\otimes M)^\C\right) \xrightarrow{\cong} R\otimes M.$$
Since $M$ is dualizable, one has bijections
\begin{align*}
\Mor_{\C_R}(R^n,R\otimes M) &\simeq \Mor_{\C_R}(R\otimes (\1^n\otimes M^\vee), R) &\simeq \Mor_\C(\1^n\otimes M^\vee,R) \\
\alpha &\mapsto  \tilde{\alpha}_R:=(\id_R\otimes \ev_M)\circ (\alpha\otimes \id_{M^\vee}) &\mapsto  \tilde{\alpha}:=\tilde{\alpha}_R|_{\1^n\otimes M^\vee}
\end{align*}
Similarly, for the inverse morphism $\beta:=\alpha^{-1}:R\otimes M\to R^n$, one has
\begin{align*}
\Mor_{\C_R}(R\otimes M,R^n) &\simeq \Mor_{\C_R}(R\otimes (M\otimes (\1^n)^\vee), R) &\simeq \Mor_\C(M\otimes (\1^n)^\vee,R) \\
\beta &\mapsto  \tilde{\beta}_R:=(\id_R\otimes \ev_{\1^n})\circ (\beta\otimes \id_{(\1^n)^\vee}) &\mapsto  \tilde{\beta}:=\tilde{\beta}_R|_{M\otimes (\1^n)^\vee}
\end{align*}
Therefore the isomorphism $\alpha$ induces a morphism of $\C$-algebras
$$f:\Sym\Bigl( (M\otimes (\1^n)^\vee )\oplus (\1^n\otimes M^\vee) \Bigr)\to R.$$
We check that this morphism factors through $U$, i.e.~we have to check that
the morphisms
$$M\otimes M^\vee \xrightarrow{\id\otimes\delta_{\1^n}\otimes \id} M \otimes (\1^n)^\vee\otimes \1^n \otimes M^\vee \xrightarrow{ \tilde{\beta} \otimes \tilde{\alpha}}R\otimes R\xrightarrow{\mu_R} R $$
and
$$M\otimes M^\vee \xrightarrow{\ev_M} \1 \xrightarrow{u_R} R$$
are equal.
For this we consider the $R$-linear extensions in the category $\C_R$. By \cite[Sect.~2.4]{pd:ct}, the composition
$$M_{\!R}^\vee\xrightarrow{\delta_{R^n}\otimes \id_{M_{\!R}^\vee}} (R^n)^\vee \otimesR R^n\otimesR M_{\!R}^\vee  \xrightarrow{\id\otimes \alpha\otimes \id} M_{\!R}^\vee\otimesR M_{\!R}\otimesR (R^n)^\vee \xrightarrow{ \id\otimes \ev_{M_{\!R}}} (R^n)^\vee$$
is just the transpose $\tp{\alpha}:M_R^\vee\to (R^n)^\vee$ of the morphism $\alpha$, and this equals the contragredient $\beta^\vee$ of $\beta=\alpha^{-1}$.\\
Hence the equality of the two morphisms reduces to the commutativity of the 
diagram
$$\xymatrix@C+20pt{
M_R \otimes_R M_R^\vee \ar[r]^{\beta \otimes \beta^\vee} \ar[dr]_{\ev_{M_R}} &
R^n \otimes_R (R^n)^\vee   \ar[d]^{\ev_{R^n}} \\
& R.
}$$
But by definition of the contragredient (see \cite[Sect.~2.4]{pd:ct}), this diagram commutes.

It remains to show that the image of $\iota(R^\C)\otimes U\xrightarrow{\eps_R\otimes f} R\otimes R\xrightarrow{\mu_R} R$ does not depend on the chosen morphism $f:U\to R$.\\
Given two morphism of $\C$-algebras $f,g:U\to R$, let $\tilde{\alpha}_f, \tilde{\alpha}_g\in \Mor_{\C}(\1^n \otimes M^\vee, R)$ be the restrictions of $f$ resp. of $g$ to $\1^n\otimes M^\vee\subseteq U$, and let $\tilde{\beta}_f, \tilde{\beta}_g\in \Mor_{\C}(M\otimes (\1^n)^\vee,R)$ be the restrictions of $f$ resp. of $g$ to $M\otimes (\1^n)^\vee\subseteq U$.
Furthermore, let $\alpha_f,\alpha_g\in \Mor_{\C_R}(R^n,M_R)$ and $\beta_f,\beta_g\in \Mor_{\C_R}(M_R,R^n)$ denote the corresponding isomorphisms. Then by similar considerations as above one obtains that $\beta_f$ and $\beta_g$ are the inverses of $\alpha_f$ and $\alpha_g$, respectively.
Then
$$\beta_g\circ \alpha_f\in \Mor_{\C_R}(R^n,R^n)\simeq \Hom_{R^\C}((R^\C)^n,(R^n)^\C)
\simeq  \Mor_{\C_{\iota(R^\C)}}(\iota(R^\C)^n,\iota(R^\C)^n)$$
is induced by an isomorphism on $\iota(R^\C)^n$ (which we also denote by $\beta_g\circ \alpha_f$).
Therefore for the $\iota(R^\C)$-linear extension $\tilde{\alpha}_{f,\iota(R^\C)},\tilde{\alpha}_{g,\iota(R^\C)}:\iota(R^\C)\otimes \1^n\otimes M^\vee \to R$, one has
\begin{eqnarray*}
\tilde{\alpha}_{f,\iota(R^\C)} &=& (\id_R\otimes \ev_M)\circ (\alpha_f|_{\iota(R^\C)^n}\otimes \id_{M^\vee}) \\
&=& (\id_R\otimes \ev_M)\circ (\alpha_g|_{\iota(R^\C)^n}\otimes \id_{M^\vee})\circ \left((\beta_g\circ \alpha_f)\otimes  \id_{M^\vee}\right) \\
&=& \tilde{\alpha}_{g,\iota(R^\C)}\circ  \left((\beta_g\circ \alpha_f)\otimes  \id_{M^\vee}\right).
\end{eqnarray*}
and similarly,
$$\tilde{\beta}_{f,\iota(R^\C)}=  \tilde{\beta}_{g,\iota(R^\C)}\circ  \left((\alpha_g\circ \beta_f)\otimes  \id_{M^\vee}\right).$$
Hence, the morphism $\mu_R\circ (\eps_R\otimes f):\iota(R^\C)\otimes U\to R$ factors through $\mu_R\circ (\eps_R\otimes g)$ and by changing the roles of $f$ and $g$, the morphism $\mu_R\circ (\eps_R\otimes g)$ factors through $\mu_R\circ (\eps_R\otimes f)$. So the images are equal.
\end{proof}

\begin{rem}
In the classical settings, every Picard-Vessiot ring for some module $M$ is a quotient of the universal solution ring $U$. This is also the case in this abstract setting (see Thm.~\ref{thm:simple-minimal-solution-rings-are-quotients} below). More generally, we will see that every simple minimal solution ring for $M$ (i.e.~without the assumption on the constants) is a quotient of $U$. Conversely, in Cor.~\ref{cor:special-quotients-are-pv-rings} we show that every quotient of $U$ by a maximal $\C$-ideal $\m$ is a Picard-Vessiot ring if $(U/\m)^\C=k$.\\
Dropping the assumption $(U/\m)^\C=k$, however, one still has a simple solution ring $U/\m$ (by Prop.~\ref{prop:image-of-solution-rings}), but $U/\m$ may not be minimal. To see this, let $M=\1$. Then trivially $R:=\1$ is a Picard-Vessiot ring for $M$, and the only one, since it is contained in any other $\C$-algebra.\\
The universal solution ring for $M=\1$, however, is given by $U\isom \1\otimes_k k[x,x^{-1}]$. Hence, for every maximal ideal $I$ of $k[x,x^{-1}]$, $\m:=\iota(I)$ is a maximal $\C$-ideal of $U=\iota( k[x,x^{-1}])$ by Lemma~\ref{lemma:ideal-bijection::abstract}. But $U/\m\isom \iota(k[x,x^{-1}]/I)$ is only a minimal solution ring, if $k[x,x^{-1}]/I\isom k$, i.e. $U/\m\isom \1$.
\end{rem}

We continue with properties of quotients of $U$.

\begin{prop}\label{prop:properties-of-quotients-of-U}
Let $U$ be the universal solution ring for some dualizable $M\in \C$, and let $R$ be a quotient algebra of $U$. Then $\upsilon(R)$ is a finitely generated faithfully flat $\O_\X$-algebra. If in addition $R$ is a simple $\C$-algebra, then $R^\C$ is a finite field extension of $k$.
\end{prop}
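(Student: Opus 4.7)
Proof plan: The statement splits into two claims which feed into each other, and I would address them in that order.

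For finite generation and faithful flatness of $\upsilon(R)$: the proof of Theorem~\ref{thm:exists-sol-ring} already gives a transparent local description of $\upsilon(U)$. On any affine open $\U=\spec(S)\subseteq\X$ on which $\upsilon(M)|_\U$ is free of rank $n$, we computed $\upsilon(U)(\U)\cong S[X,X^{-1}]$, the Laurent polynomial ring on the $n^2$ entries of an $n\times n$ matrix $X$; this is smooth, hence finitely generated and faithfully flat, over $S$. Patching, $\upsilon(U)$ is a quotient of the symmetric algebra on the finitely generated $\O_\X$-module $(\upsilon(M)\otimes(\O_\X^n)^\vee)\oplus(\O_\X^n\otimes\upsilon(M)^\vee)$ (finiteness via Lemma~\ref{lemma:dualizables-are-projective-of-finite-rank}), hence a finitely generated $\O_\X$-algebra, and faithfully flat by the local description. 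Since $R=U/I$ for a $\C$-ideal $I$ and $\upsilon$ is exact, $\upsilon(R)=\upsilon(U)/\upsilon(I)$ inherits finite generation at once; faithful flatness of the quotient is the delicate point, addressed below.

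For the finiteness of $R^\C/k$ when $R$ is simple: Schur's lemma applied in $\C_R$ shows $R^\C=\End_{\C_R}(R)$ is a division ring, and commutativity of $R$ makes it a field, which contains $k=\End_\C(\1)$ canonically via the unit $u_R:\1\to R$. By Proposition~\ref{prop:adjointness}(iii) the counit $\varepsilon_R:\iota(R^\C)\to R$ is a monomorphism, and a standard adjunction check shows it is a morphism of $\C$-algebras. Applying $\upsilon$ gives an injection of $\O_\X$-algebras $\O_\X\otimes_k R^\C\hookrightarrow\upsilon(R)$; restricting to any affine open $\U=\spec(S)\subseteq\X$ yields an $S$-algebra injection $S\otimes_k R^\C\hookrightarrow\upsilon(R)(\U)$. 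The right-hand side is a finitely generated flat $S$-algebra by the first part, so Theorem~\ref{thm:abstract-algebra}, applied with $\ell:=R^\C$, forces $[R^\C:k]<\infty$.

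The technical heart is the faithful flatness of $\upsilon(R)=\upsilon(U)/\upsilon(I)$. Since arbitrary quotients of a faithfully flat algebra are typically very far from flat, the $\C$-structure on $I$ must be used essentially. I would work locally on $\U=\spec(S)$, where $\upsilon(R)(\U)=S[X,X^{-1}]/J$, and exploit two pieces of structure: the trivialization $R\otimes M\cong R^n$ inherited from $U\otimes M\cong U^n$, which survives the quotient and pushes forward under $\upsilon$; and the $U$-module compatibility of $I$, which constrains $J$ to a shape for which $S\to S[X,X^{-1}]/J$ stays flat with surjective $\spec$. Pinning down this local description of $\C$-ideals inside $S[X,X^{-1}]$ — and verifying that it forces the defining relations to be ``transverse'' to $\spec S$ — is where I expect the real work to lie.
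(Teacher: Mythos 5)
Your first part (finite generation) and your third part (the embedding $\iota(R^\C)\hookrightarrow R$ via $\varepsilon_R$, applying $\upsilon$, localising, and invoking Thm.~\ref{thm:abstract-algebra}) coincide with the paper's argument. But the faithful flatness of $\upsilon(R)$ is left as a genuine gap: you explicitly defer the key step (``where I expect the real work to lie''), and the strategy you sketch for it is unlikely to succeed as stated. After applying $\upsilon$, the ideal $\upsilon(I)\subseteq\upsilon(U)$ is just some ideal of $S[X,X^{-1}]$ locally, and quotients of flat algebras by arbitrary ideals are almost never flat; the constraint that $I$ is a $\C$-ideal is invisible at the level of the underlying $\O_\X$-algebra, so a purely local ``transversality'' analysis of $J\subseteq S[X,X^{-1}]$ has nothing to grip. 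Likewise, the trivialisation $R\otimes M\cong R^n$ (which does descend from $U$ by base change) concerns $M$ becoming trivial over $R$, not flatness of $\upsilon(R)$ over $\O_\X$, so it does not feed into the flatness question in any direct way. Note also that your finiteness argument for $R^\C$ needs exactly this flatness of $\upsilon(R)(\U)$ over $S$ to apply Thm.~\ref{thm:abstract-algebra}, so the gap propagates.

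The missing idea is categorical, not local: since $M$ is dualizable, $\tenscat{M}$ is a Tannakian category with fibre functor $\upsilon$ (Rem.~\ref{rem:dualizables-are-projective}), the symmetric algebra $T=\Sym\bigl((M\otimes(\1^n)^\vee)\oplus(\1^n\otimes M^\vee)\bigr)$ is an ind-object of $\tenscat{M}$, and hence so is its quotient $R$. For a nonzero algebra in ${\rm Ind}(\tenscat{M})$, Deligne's Lemma~6.11 in \cite{pd:ct} asserts that its image under a fibre functor is faithfully flat; this is the result the paper invokes, and its proof uses rigidity essentially ($R$ is a filtered colimit of dualizable subobjects, whose images are finitely generated locally free by Lemma~\ref{lemma:dualizables-are-projective-of-finite-rank} and (F2), giving flatness, with faithfulness coming from the split inclusion of $\1$ at each finite level). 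Without this (or an equivalent Tannakian input), your outline does not establish the proposition.
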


\begin{proof}
Since $R$ is a quotient of $U$, it is a quotient of $T:=\Sym\Bigl( (M\otimes (\1^n)^\vee ) \oplus (\1^n\otimes M^\vee) \Bigr)$. Since $\upsilon(M)$ is finitely generated, $\upsilon(T)$ is a finitely generated $\O_\X$-algebra and therefore also $\upsilon(R)$ is a finitely generated $\O_\X$-algebra.

Since $M$ is dualizable, $\tenscat{M}$ is a Tannakian category (see Rem.~\ref{rem:dualizables-are-projective}), and $T$ is an ind-object of $\tenscat{M}$. Being a quotient of $T$, $R$ also is an ind-object of $\tenscat{M}$. Therefore by  \cite[Lemma 6.11]{pd:ct}, $\upsilon(R)$ is faithfully flat over $\O_\X$.

If in addition $R$ is simple, $\ell:=R^\C$ is a field. By exactness of $\iota$ and Prop.~\ref{prop:adjointness}(iii), we have a monomorphism
$\iota(\ell)\hookrightarrow R$, and hence by exactness of $\upsilon$, an inclusion of $\O_\X$-algebras
$\O_\X \otimes_k \ell=\upsilon(\iota(\ell))\hookrightarrow \upsilon(R)$.
After localising to some affine open subset of $\X$, we can apply Thm.~\ref{thm:abstract-algebra}, and obtain that $\ell$ is a finite extension of $k$.
\end{proof}

\begin{thm}\label{thm:simple-minimal-solution-rings-are-quotients}
Let $M$ be a dualizable object of $\C$, and let $U$ be the universal solution ring for $M$.
Then every simple minimal solution ring for $M$ is isomorphic to a quotient of the universal solution algebra $U$.
In particular, every Picard-Vessiot ring for $M$ is isomorphic to a quotient of $U$.
\end{thm}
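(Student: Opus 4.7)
The plan is to produce, for an arbitrary simple minimal solution ring $R$ for $M$, a surjective $\C$-algebra morphism $f:U\twoheadrightarrow R$; this exhibits $R$ as $U/\ker(f)$. The tools will be Thm.~\ref{thm:univ-sol-ring} (to obtain such an $f$) together with Prop.~\ref{prop:image-of-solution-rings} and the minimality of $R$ (to force $f$ to be surjective).

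First I would check that Thm.~\ref{thm:univ-sol-ring} applies. Its non-trivial hypothesis is that $(R\otimes M)^\C$ is a free $R^\C$-module. Since $R$ is simple in $\C_R$, Schur's lemma identifies $R^\C=\End_{\C_R}(R)$ as a division ring, and it is commutative because any two morphisms $\1\to R$ commute under the commutative multiplication $\mu_R$. Hence $R^\C$ is a field, every $R^\C$-module is automatically free, and Thm.~\ref{thm:univ-sol-ring} delivers a $\C$-algebra map $f:U\to R$.

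Next I would factor $f$ as $U\twoheadrightarrow R'\hookrightarrow R$, where $R':=\im(f)$ carries the inherited $\C$-subalgebra structure of $R$. The main step is to show that $R'$ is itself a solution ring for $M$, by applying Prop.~\ref{prop:image-of-solution-rings} to the epimorphism $U\twoheadrightarrow R'$; its second hypothesis requires $(U\otimes M)^\C$ to be a free $U^\C$-module. This is read off from the construction of $U$ in the proof of Thm.~\ref{thm:exists-sol-ring}, which produced an isomorphism $\alpha:U\otimes\1^n\xrightarrow{\sim}U\otimes M$ in $\C_U$, so that $(U\otimes M)^\C\cong(U\otimes\1^n)^\C=(U^\C)^n$ is free of rank $n$. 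Prop.~\ref{prop:image-of-solution-rings} then gives that $R'$ is a solution ring.

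Finally, the inclusion $R'\hookrightarrow R$ is a monomorphism of $\C$-algebras from a solution ring into the minimal solution ring $R$, and by the very definition of minimality this inclusion must be an isomorphism. Hence $f$ is an epimorphism and $R\cong U/\ker(f)$, which covers the Picard-Vessiot case as well. I do not expect a genuine obstacle here: the only points to watch are the identification of $R^\C$ as a field (resolved by Schur plus commutativity of $R$) and the extraction of freeness of $(U\otimes M)^\C$ over $U^\C$ from the explicit presentation of $U$, both of which are immediate once one revisits the construction in Thm.~\ref{thm:exists-sol-ring}.
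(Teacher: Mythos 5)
Your proposal is correct and follows essentially the same route as the paper: obtain $f:U\to R$ from Thm.~\ref{thm:univ-sol-ring} (using that $R^\C$ is a field, hence $(R\otimes M)^\C$ is free), note that $(U\otimes M)^\C$ is free over $U^\C$ so that $f(U)$ is a solution ring by Prop.~\ref{prop:image-of-solution-rings}, and conclude $f(U)=R$ by minimality. Your explicit verifications (Schur's lemma for $R^\C$ and the isomorphism $\alpha:U\otimes\1^n\to U\otimes M$ giving freeness of $(U\otimes M)^\C$) simply spell out steps the paper asserts.
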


\begin{proof}
Let $R$ be a simple minimal solution ring for $M$. Since $R$ is simple, $R^\C$ is a field, and therefore $(R\otimes M)^\C$ is a free $R^\C$-module.
Hence $R$ fulfills the assumptions of Theorem \ref{thm:univ-sol-ring}, and there is a morphism of $\C$-algebras $f:U\to R$.
As $(U\otimes M)^\C$ is a free $U^\C$-module, the image $f(U)$ is a solution ring by Prop.~\ref{prop:image-of-solution-rings}.
As $R$ is minimal, we obtain $f(U)=R$. Hence, $R$ is the quotient of $U$ by the kernel of $f$.
\end{proof}

\begin{cor}\label{cor:properties-of-simple-minimal-solution-rings}
Let $R\in \C$ be a simple minimal solution ring for some dualizable $M\in \C$.
Then $\upsilon(R)$ is a finitely generated faithfully flat $\O_\X$-algebra, and $R^\C$ is a finite field extension of $k$.
\end{cor}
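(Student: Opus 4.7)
The statement is essentially an immediate combination of the two preceding results. The plan is as follows.

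First, I would invoke Theorem \ref{thm:simple-minimal-solution-rings-are-quotients}: since $R$ is a simple minimal solution ring for the dualizable object $M$, there is an isomorphism of $\C$-algebras between $R$ and a quotient $U/\m$ of the universal solution ring $U$ for $M$ (where $\m$ is the kernel of the surjection $U \twoheadrightarrow R$ provided by the theorem). Note that $\m$ need not be a maximal $\C$-ideal in general; we only use that $R$ is a quotient of $U$.

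Second, I would apply Proposition \ref{prop:properties-of-quotients-of-U} to this quotient. That proposition directly asserts that for any quotient algebra of $U$, the functor $\upsilon$ sends it to a finitely generated faithfully flat $\O_\X$-algebra; this gives the first assertion of the corollary. For the second assertion, the same proposition gives that, under the additional hypothesis that the quotient is a simple $\C$-algebra, the constants form a finite field extension of $k$. Since $R$ is simple by assumption, this hypothesis is met, and $R^\C$ is finite over $k$.

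Thus there is really no new work to do beyond citing the previous two results in sequence. The only point worth spelling out, if anything, is the transport of the properties along the isomorphism $R \cong U/\m$: both finite generation and faithful flatness of $\upsilon(R)$, as well as the structure of $R^\C = (U/\m)^\C$, are invariant under isomorphism of $\C$-algebras, so no subtlety arises. The main content (and potential obstacle) lies entirely in the earlier ingredients — namely the construction of the universal solution ring $U$ in Theorem \ref{thm:exists-sol-ring}, the factorisation in Theorem \ref{thm:simple-minimal-solution-rings-are-quotients}, and the application of the commutative algebra theorem (Theorem \ref{thm:abstract-algebra}) inside the proof of Proposition \ref{prop:properties-of-quotients-of-U} to bound the constants — all of which have already been carried out. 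Hence the corollary itself admits a one-line proof.
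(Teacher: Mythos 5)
Your proposal is correct and matches the paper's own proof, which likewise deduces the corollary immediately from Theorem \ref{thm:simple-minimal-solution-rings-are-quotients} and Proposition \ref{prop:properties-of-quotients-of-U}. No gap: the observation that the relevant properties transport along the isomorphism $R\cong U/\m$ is the only (trivial) point to note, just as you say.
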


\begin{proof}
This follows directly from Thm.~\ref{thm:simple-minimal-solution-rings-are-quotients} and Prop.~\ref{prop:properties-of-quotients-of-U}.
\end{proof}

\begin{prop}\label{prop:unique-pv-inside-simple-sol-ring}
Let $M$ be a dualizable object of $\C$, and let $R$ be a simple solution ring for $M$ with $R^\C=k$.
Then there is a unique Picard-Vessiot ring for $M$ inside $R$. This is the image of the universal solution ring $U$ under a morphism $f:U\to R$.
\end{prop}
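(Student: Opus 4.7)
The plan is to build the Picard--Vessiot ring inside $R$ as the image $R_0:=f(U)$ of a morphism from the universal solution ring $U$, to show that this image is canonical independent of $f$, to verify the four Picard--Vessiot properties for $R_0$, and to conclude uniqueness via Theorem~\ref{thm:simple-minimal-solution-rings-are-quotients}.

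For the existence of $f$ and canonicity of $R_0$: since $R$ is a solution ring with $R^\C=k$, the solution space $(R\otimes M)^\C$ is a $k$-vector space of dimension $n=\operatorname{rank}(\upsilon(M))$, hence in particular a free $R^\C$-module. Theorem~\ref{thm:univ-sol-ring} then supplies a morphism $f:U\to R$ of $\C$-algebras; set $R_0:=f(U)$. Using $R^\C=k$, one has $\iota(R^\C)=\1$ and the counit $\eps_R$ agrees with the unit $u_R$, so the composite $\mu_R\circ(\eps_R\otimes f)$ collapses (via the canonical isomorphism $\1\otimes U\cong U$) to $f$ itself. The uniqueness clause in Theorem~\ref{thm:univ-sol-ring} then shows that $R_0$ is independent of the choice of $f$.

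To verify that $R_0$ is a Picard--Vessiot ring, three of the four properties are straightforward. First, $R_0^\C=k$ follows from $k\hookrightarrow R_0^\C\hookrightarrow R^\C=k$. Second, $R_0$ is a solution ring by Proposition~\ref{prop:image-of-solution-rings} applied to the epimorphism $f:U\twoheadrightarrow R_0$, using that $(U\otimes M)^\C$ is a free $U^\C$-module (visible from the isomorphism $\alpha:U\otimes\1^n\to U\otimes M$ in the proof of Theorem~\ref{thm:exists-sol-ring}). Third, for minimality, any solution subring $\tilde R\subseteq R_0$ inherits $\tilde R^\C=k$, so Theorem~\ref{thm:univ-sol-ring} produces a morphism $h:U\to\tilde R$; its composition with $\tilde R\hookrightarrow R$ is a morphism $U\to R$ whose image is $R_0$ by canonicity, and since that image factors through $\tilde R$ one concludes $R_0\subseteq\tilde R$, forcing $\tilde R=R_0$.

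The principal obstacle is simplicity of $R_0$. Given a non-zero $\C$-ideal $I\subsetneq R_0$, the ideal $J:=\mu_R(R\otimes I)\subseteq R$ contains $I$ via the unit $u_R$ and is therefore a non-zero $\C$-ideal of $R$, so by simplicity of $R$ one has $J=R$; the task is to promote the equality $R\cdot I=R$ to a contradiction. I expect this to follow from faithful flatness of $R$ over $R_0$, which ought to be extractable from the trivializations $R_0\otimes M\cong R_0^{\,n}$ and $R\otimes M\cong R^n$ (and the analogues for $M^\vee$) together with the fact that $(R_0\otimes N)^\C\to(R\otimes N)^\C$ is an isomorphism of $n$-dimensional $k$-vector spaces for $N\in\{M,M^\vee\}$; heuristically this presents $R$ as an $R_0$-torsor. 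Once simplicity of $R_0$ is established, uniqueness of the Picard--Vessiot ring inside $R$ is immediate: any such $R'\subseteq R$ is, by Theorem~\ref{thm:simple-minimal-solution-rings-are-quotients}, a surjective image of $U$ via some $g:U\twoheadrightarrow R'$, and composing with $R'\hookrightarrow R$ yields a morphism $U\to R$ whose image is $R'$; by canonicity this image equals $R_0$, so $R'=R_0$.
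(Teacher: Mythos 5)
Everything in your proposal up to the simplicity step tracks the paper's own argument: existence of $f:U\to R$ via Thm.~\ref{thm:univ-sol-ring} (freeness of $(R\otimes M)^\C$ over $R^\C=k$), the collapse of $\mu_R\circ(\eps_R\otimes f)$ to $f$ because $\iota(R^\C)=\1$, canonicity of $R_0=f(U)$ from the uniqueness clause, the solution-ring property of $R_0$ via Prop.~\ref{prop:image-of-solution-rings} with the freeness hypothesis, $R_0^\C=k$, minimality, and the final uniqueness via Thm.~\ref{thm:simple-minimal-solution-rings-are-quotients}. The genuine gap is exactly the point you flag as the ``principal obstacle'': simplicity of $R_0$ is not proved, and the route you propose cannot work as stated. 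Faithful flatness of $R$ over $R_0$ is false in this generality: in the classical differential setting take $R$ to be the Picard--Vessiot field $E=\Quot(R_0)$, which is a simple solution ring with $E^\C=k$ containing the PV-ring $R_0$; then $E$ is flat but not faithfully flat over $R_0$ unless $R_0$ is already a field, since $\spec(E)\to\spec(R_0)$ meets only the generic point. (The torsor heuristic also points at the wrong object: what the theory eventually yields, in Thm.~\ref{thm:Aut-R-represented-by-H}, is that $\spec(\upsilon(R_0))$ is a torsor over $\X$ under the Galois group, not that $\spec R\to\spec R_0$ is a torsor map.) So your reduction ``$R\cdot I=R$'' is fine, but the promotion of this to a contradiction has no available tool, and this is precisely the hard content of the proposition.

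For comparison, the paper proves simplicity of $f(U)$ by a different device: choose a maximal $\C$-ideal $I\subseteq U$, so that $R':=U/I$ is a \emph{simple} solution ring (Prop.~\ref{prop:image-of-solution-rings}), with quotient map $g:U\to R'$; then choose a maximal $\C$-ideal $\m$ of $R'\otimes R$, so that both $R'$ and $R$ embed into the simple algebra $Q:=(R'\otimes R)/\m$, whose constants $l:=Q^\C$ form a field. Applying the uniqueness clause of Thm.~\ref{thm:univ-sol-ring} to the two morphisms $U\to Q$ obtained from $g$ and $f$ gives $\iota(l)\cdot(g(U)\otimes\1)=\iota(l)\cdot(\1\otimes f(U))$ inside $Q$. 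Since $g(U)=R'$ is simple, extension by the constant field $l$ preserves simplicity (Cor.~\ref{cor:still-simple::abstract}, applied in $\C_{R'}$), so this common subalgebra is simple; and since $\iota(l)\cdot(\1\otimes f(U))\cong l\otimes_k f(U)$ \emph{is} faithfully flat over $f(U)$ --- faithful flatness along the constant field extension $l/k$, which is the easy kind, not faithful flatness of $R$ over $f(U)$ --- simplicity descends to $f(U)$. If you want to salvage your write-up, this comparison-with-a-simple-quotient-of-$U$ argument (or an equivalent substitute) is what must replace the faithful-flatness claim.
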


\begin{proof}
As in the proof of Thm.~\ref{thm:simple-minimal-solution-rings-are-quotients}, $R$ fulfills the assumptions of Theorem \ref{thm:univ-sol-ring},
so there is a morphism of $\C$-algebras $f:U\to R$. By assumption on $R$, we have $\iota(R^\C)=\iota(k)=\1$,
and hence $\eps_R\otimes f=f:\1\otimes U=U\to R$. So by the second part of Theorem \ref{thm:univ-sol-ring}, the image $f(U)$ does not depend
on the choice of $f$. In particular, $f(U)$ (which is a solution ring by Prop.~\ref{prop:image-of-solution-rings}) is the unique minimal solution ring
inside $R$. It remains to show that $f(U)$ is a simple algebra.

Let $I\subseteq U$ be a maximal subobject in $\C_U$ (i.e.~an ideal of $U$), let $R':=U/I$ and let $g:U\to R'$ be the canoncial epimorphism. Furthermore, let $\m\in \C$ be a maximal ideal of $R'\otimes R$. Since $R$ and $R'$ are simple, the natural morphisms $R\to (R'\otimes R)/\m$ and $R'\to (R'\otimes R)/\m$ considered in $\C_R$ and $\C_{R'}$, respectively, are monomorphisms, and it suffices to show  that $\1\otimes f(U)\subseteq (R'\otimes R)/\m$  is simple.
\[
\xymatrix{
 U \ar^f[rr] \ar_g[d] && R \ar^{1\otimes \id_R}[d]\\
 R' \ar^(.4){\id_{R'}\otimes 1}[rr] && (R'\otimes R)/\m \\
}
\]

$g(U)=R'$ is simple by construction, and so is $g(U)\otimes \1\subseteq  (R'\otimes R)/\m$. By Theorem \ref{thm:univ-sol-ring}, we have $\iota(l)\cdot (g(U)\otimes \1)=\iota(l)\cdot (1\otimes f(U))$, where $l=\left((R'\otimes R)/\m\right)^\C$, and $l$ is a field, since $(R'\otimes R)/\m$ is simple. By Corollary \ref{cor:still-simple::abstract}, applied to the category $\C_{R'}$, $\iota(l)\cdot (g(U)\otimes \1)$ is also simple, i.e.~$\iota(l)\cdot (\1\otimes f(U))$ is simple.
Since, $\iota(l)\cdot (\1\otimes f(U))\cong l\otimes_{k} f(U)$ is a faithfully flat extension of $f(U)$, this implies that $f(U)$ is also simple.
\end{proof}

\begin{rem}
The previous proposition ensures the existence of Picard-Vessiot rings in special cases. For example, in the differential setting over
e.g.~$F=\mathbb{C}(t)$, if $x$ is a point which is non-singular for the differential equation, then one knows that the ring of  holomorphic functions
on a small disc around that point is a solution ring for the equation.
Hence, there exists a Picard-Vessiot ring (even unique) for the corresponding differential module inside this ring of holomorphic functions.\\
Similarly, in the case of rigid analytically trivial pre-$t$-motives (which form a special case of the difference setting) the field of fractions of a 
given ring of restricted power series  is a simple solution ring for all these modules (cf.~\cite{mp:tdadmaicl}).
\end{rem}

\begin{cor}\label{cor:special-quotients-are-pv-rings}
Let $M\in \C$ be dualizable, and let $\m$ be a maximal $\C$-ideal of the universal solution ring $U$ for $M$ such that $(U/\m)^\C=k$. Then $U/\m$ is a Picard-Vessiot ring for $M$.
\end{cor}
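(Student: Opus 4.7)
My plan is to verify the four conditions in the definition of a Picard-Vessiot ring for $U/\m$ one by one, observing that three of them are essentially immediate and that the last (minimality) follows at once from Prop.~\ref{prop:unique-pv-inside-simple-sol-ring}.

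First I would note that $U/\m$ is a simple $\C$-algebra by the very choice of $\m$ as a maximal $\C$-ideal, and that $(U/\m)^\C = k$ holds by hypothesis. Next, to see that $U/\m$ is a solution ring for $M$, I would apply Prop.~\ref{prop:image-of-solution-rings} to the canonical projection $\pi\colon U \to U/\m$: its source $U$ is a solution ring for $M$ by Thm.~\ref{thm:exists-sol-ring}, $\pi$ is an epimorphism of $\C$-algebras, and $U/\m$ is simple, so the hypotheses of that proposition are met.

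It remains to prove minimality. Here I would feed $R := U/\m$ into Prop.~\ref{prop:unique-pv-inside-simple-sol-ring}: $R$ is a simple solution ring for $M$ with $R^\C = k$, so the proposition produces a unique Picard-Vessiot ring inside $R$, namely the image $f(U) \subseteq R$ for any morphism $f\colon U \to R$ of $\C$-algebras (the second part of Thm.~\ref{thm:univ-sol-ring} ensuring that this image does not depend on $f$). Taking $f = \pi$, the canonical projection is surjective, so $\pi(U) = U/\m$ and hence the PV-ring inside $U/\m$ equals $U/\m$ itself.

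I do not expect any genuine obstacle: the content has already been accumulated in Prop.~\ref{prop:image-of-solution-rings} and Prop.~\ref{prop:unique-pv-inside-simple-sol-ring}, and the only thing to observe is that the canonical quotient map $U \to U/\m$ qualifies as a valid morphism $f$ in the statement of the latter proposition, so that its image, which is all of $U/\m$, coincides with the unique PV-ring inside $U/\m$.
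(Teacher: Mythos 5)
Your proposal is correct and follows essentially the same route as the paper: apply Prop.~\ref{prop:image-of-solution-rings} to the quotient map to see that $U/\m$ is a simple solution ring with constants $k$, then invoke Prop.~\ref{prop:unique-pv-inside-simple-sol-ring} (via the independence statement of Thm.~\ref{thm:univ-sol-ring}) with $f$ the canonical projection, whose image is all of $U/\m$. No gaps.
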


\begin{proof}
By Prop.~\ref{prop:image-of-solution-rings}, $U/\m$ fulfills the conditions of $R$ in the previous propostion. Hence, the image of the morphism $U\to U/\m$ (which clearly is $U/\m$) is a Picard-Vessiot ring. 
\end{proof}

\begin{cor}\label{cor:pv-rings-isom-over-finite-ext}
Let $M\in \C$ be dualizable, and let $R$ and $R'$ be two simple minimal solution rings for $M$. Then there exists a finite field extension $\ell$ of $k$ containing $R^\C$ and $(R')^\C$ such that $R\otimes_{R^\C} \ell\isom R'\otimes_{(R')^\C} \ell$.
\end{cor}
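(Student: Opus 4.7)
The plan is to adapt the proof of Prop.~\ref{prop:unique-pv-inside-simple-sol-ring} to this symmetric setting by passing to a common simple quotient of $R\otimes R'$. First I would pick a maximal $\C$-ideal $\m$ of $R\otimes R'$ and set $T:=(R\otimes R')/\m$, so that $T$ is a simple $\C$-algebra. The same argument as in the proof of Prop.~\ref{prop:unique-pv-inside-simple-sol-ring} then shows that the natural morphisms $R\to T$ and $R'\to T$ are monomorphisms: their kernels are $\C$-ideals of the simple algebras $R$ and $R'$ and are not the whole algebra (since the image of $1$ is non-zero).

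I would then verify that $T$ is a solution ring for $M$. Using the embedding $R\hookrightarrow T$ and the fact that $\iota$ commutes with tensor products over $R^\C$ (being a left-adjoint tensor functor), one computes
$T\otimes M\cong T\otimes_R\iota_R((R\otimes M)^{\C_R})\cong \iota_T(T^\C\otimes_{R^\C}(R\otimes M)^{\C_R})$.
Hence $T\otimes M$ lies in the image of $\iota_T$, and Lemma~\ref{lemma:when-eps-is-iso}, applicable as $T$ is simple, forces $\eps_{M_T}$ to be an isomorphism. Set $\ell:=T^\C$; this is a field since $T$ is simple, and the embeddings of $R,R'$ into $T$ yield $R^\C,(R')^\C\subseteq\ell$.

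The finiteness of $\ell/k$ would come from Thm.~\ref{thm:abstract-algebra}. By Thm.~\ref{thm:simple-minimal-solution-rings-are-quotients} both $R$ and $R'$ are quotients of the universal solution ring $U$, hence ind-objects of the Tannakian subcategory $\tenscat{M}\subseteq\C$. Therefore $R\otimes R'$, and its subquotient $T$, are also ind-objects of $\tenscat{M}$. By \cite[Lemma 6.11]{pd:ct} $\upsilon(T)$ is faithfully flat over $\O_\X$, and it is finitely generated as an $\O_\X$-algebra because it is a quotient of $\upsilon(R)\otimes_{\O_\X}\upsilon(R')$, both factors being finitely generated by Cor.~\ref{cor:properties-of-simple-minimal-solution-rings}. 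Restricting to an affine open of $\X$ and applying Thm.~\ref{thm:abstract-algebra} to the inclusion $\O_\X\otimes_k\ell=\upsilon(\iota(\ell))\hookrightarrow\upsilon(T)$ yields $[\ell:k]<\infty$.

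With $\ell$ finite over $k$, I would apply Thm.~\ref{thm:univ-sol-ring} to $T$ (applicable since $T^\C=\ell$ is a field, so $(T\otimes M)^\C$ is automatically a free $\ell$-module) through the two compositions $f_1:U\to R\hookrightarrow T$ and $f_2:U\to R'\hookrightarrow T$, whose images are $R$ and $R'$ respectively by minimality. The independence-of-$f$ part of Thm.~\ref{thm:univ-sol-ring} then gives $\iota(\ell)\cdot R=\iota(\ell)\cdot R'$ as $\C$-subalgebras of $T$. Finally, since $R$ is simple, Prop.~\ref{prop:on-iota-r}(iii) applied in $\C_R$ to the $R$-algebra $T$ shows that $\eps_T\colon R\otimes_{R^\C}\ell=\iota_R(T^{\C_R})\to T$ is a monomorphism of $\C$-algebras onto $\iota(\ell)\cdot R$; symmetrically $R'\otimes_{(R')^\C}\ell\hookrightarrow T$ has image $\iota(\ell)\cdot R'$. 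The two images coincide, giving $R\otimes_{R^\C}\ell\cong R'\otimes_{(R')^\C}\ell$. The main obstacle is the finiteness $[\ell:k]<\infty$: unlike $R$ and $R'$, the ring $T$ need not be minimal, so one cannot apply Cor.~\ref{cor:properties-of-simple-minimal-solution-rings} directly and must instead exploit the ind-object property inside $\tenscat{M}$ to feed Thm.~\ref{thm:abstract-algebra}.
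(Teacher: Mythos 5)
Your proposal is correct and follows essentially the same route as the paper's proof: pass to a simple quotient $T=(R\otimes R')/\m$, let $\ell=T^\C$, apply the independence-of-image part of Thm.~\ref{thm:univ-sol-ring} to the two composites $U\to R\hookrightarrow T$ and $U\to R'\hookrightarrow T$, identify the common image with $R\otimes_{R^\C}\ell\isom R'\otimes_{(R')^\C}\ell$, and obtain finiteness of $\ell/k$ by the argument of Prop.~\ref{prop:properties-of-quotients-of-U} via Thm.~\ref{thm:abstract-algebra}. You merely make explicit a few points the paper leaves implicit (that $T$ is a solution ring with $T^\C$ a field so Thm.~\ref{thm:univ-sol-ring} applies, and the injectivity of $R\otimes_{R^\C}\ell\to T$ via Prop.~\ref{prop:on-iota-r}(iii)), which is fine.
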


\begin{proof}
As in the proof of the previous theorem, let $f:U\to R$ and $g:U\to R'$ be epimorphisms of $\C$-algebras whose existence is guaranteed by Thm.~\ref{thm:simple-minimal-solution-rings-are-quotients}. Let $\m$ be a maximal $\C$-ideal of $R'\otimes R$, and let $\ell:=\left(R'\otimes R/\m\right)^\C$. 
Then $R'$ and $R$ embed into $R'\otimes R/\m$ and hence $(R')^\C$ and $R^\C$ both embed into $\ell$. Furthermore
 by Thm.~\ref{thm:univ-sol-ring}, the subrings $\iota(\ell)(g(U)\otimes 1)$ and 
$\iota(\ell)(1\otimes f(U))$ are equal. As $\ell$ contains both $R^\C$ and $(R')^\C$, one has $\iota(\ell)(g(U)\otimes 1)=\iota(\ell)(R'\otimes 1)\isom R'\otimes_{(R')^\C} \ell$ and
$\iota(\ell)(1\otimes f(U))\isom R\otimes_{R^\C} \ell$. Hence, $R'\otimes_{(R')^\C} \ell \isom R\otimes_{R^\C} \ell$. As in the proof of Prop.~\ref{prop:properties-of-quotients-of-U}, one shows that $\ell$ is indeed finite over $k$.
\end{proof}

\begin{thm}\label{thm:existence-of-pv-ring}
Let $M\in \C$ be dualizable. Then there exists a Picard-Vessiot ring for $M$ up to a finite field extension of $k$, i.e.~there exists a finite field extension $\ell$ of $k$ and a $\C_{\iota(\ell)}$-algebra $R$ such that $R$ is a PV-ring for $M_{\iota(\ell)}\in \C_{\iota(\ell)}$.
\end{thm}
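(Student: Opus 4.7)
The strategy is to first construct a simple solution ring in $\C$ whose field of constants is a finite extension $\ell$ of $k$, and then reinterpret that ring in the base-changed category $\C_{\iota(\ell)}$ in order to extract a Picard-Vessiot ring there.

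By Theorem \ref{thm:exists-sol-ring}, let $U$ be the nonzero universal solution ring for $M$ in $\C$. The poset of proper $\C$-ideals of $U$ is non-empty and closed under directed unions, because $\C$ is cocomplete, $\upsilon$ is exact and faithful, and a directed union of proper ideals of the commutative ring $\upsilon(U)$ remains proper. Zorn's lemma therefore yields a maximal $\C$-ideal $\m \subsetneq U$, and $R := U/\m$ is a simple $\C$-algebra. By Proposition \ref{prop:image-of-solution-rings}, $R$ is a solution ring for $M$, and by Corollary \ref{cor:properties-of-simple-minimal-solution-rings} the field $\ell := R^\C$ is a finite extension of $k$. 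The counit $\varepsilon_R \colon \iota(\ell) \to R$ is a monomorphism (Proposition \ref{prop:adjointness}(iii)) and makes $R$ into a commutative algebra in $\C_{\iota(\ell)}$.

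I would then check that the base-changed category $\C_{\iota(\ell)}$ itself satisfies the standing assumptions (C1)--(F2), with base field $\End_{\C_{\iota(\ell)}}(\iota(\ell)) = \ell$: simplicity of the new unit is Corollary \ref{cor:still-simple::abstract}, cocompleteness is inherited from $\C$ since the forgetful functor $\C_{\iota(\ell)} \to \C$ creates colimits, and a faithful exact tensor functor into $\Qcoh(\X \times_k \spec \ell)$ is obtained from $\upsilon$ via the identification $\upsilon(\iota(\ell)) = \O_\X \otimes_k \ell$. The core step is then to verify that $R \in \C_{\iota(\ell)}$ is a simple solution ring for $M_{\iota(\ell)}$ with $R^{\C_{\iota(\ell)}} = \ell$. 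Simplicity is automatic, because every $\C_{\iota(\ell)}$-ideal of $R$ is already a $\C$-ideal (the $\iota(\ell)$-action on $R$ factors through multiplication by $R$); the equality $R^{\C_{\iota(\ell)}} = \ell$ follows from the adjunction of Proposition \ref{prop:adjointness}(i) combined with the $\iota(\ell)$-linearity constraint. For the solution-ring condition one has $R \otimes_{\iota(\ell)} M_{\iota(\ell)} \cong R \otimes M$, and under the identification $(R \otimes M)^{\C_{\iota(\ell)}} = (R \otimes M)^\C$ of $\ell$-modules, the counit map computed in $\C_{\iota(\ell)}$ coincides with the one in $\C$, which is an isomorphism because $R$ is a solution ring in $\C$.

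Finally, Proposition \ref{prop:unique-pv-inside-simple-sol-ring} applied inside $\C_{\iota(\ell)}$ --- whose base field is now $\ell$ --- to the simple solution ring $R$ produces a (unique) Picard-Vessiot ring for $M_{\iota(\ell)}$ contained in $R$, which is the required $\C_{\iota(\ell)}$-algebra. The hardest part of the argument is the verification in the previous paragraph: one must track carefully how the functors $\iota$, $\iota_R$, $(-)^\C$ and their $\C_{\iota(\ell)}$-counterparts interact, in order to transfer both the solution-ring property and the identification of constants from $\C$ to $\C_{\iota(\ell)}$.
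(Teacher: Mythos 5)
Your argument is essentially the paper's own proof: quotient the universal solution ring $U$ by a maximal $\C$-ideal, note the constants $\ell=R^\C$ are finite over $k$, regard $R$ as an algebra in $\C_{\iota(\ell)}$ via $\eps_R$, and invoke Prop.~\ref{prop:unique-pv-inside-simple-sol-ring} there (the paper additionally observes that the PV-ring is $R$ itself, which the statement does not require). One small correction: for finiteness of $\ell$ over $k$ you should cite Prop.~\ref{prop:properties-of-quotients-of-U} (about simple quotients of $U$) rather than Cor.~\ref{cor:properties-of-simple-minimal-solution-rings}, since $U/\m$ is not yet known to be a \emph{minimal} solution ring.
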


\begin{proof}
Let $U$ be the universal solution ring for $M$, and let $\m\subset U$ be a maximal $\C$-ideal of $U$. Then $R:=U/\m$ is a simple solution ring for $M$ by Prop.~\ref{prop:image-of-solution-rings}, and $\ell:=R^\C$ is a finite field extension of $k$ by Prop.~\ref{prop:properties-of-quotients-of-U}.\\
Considering now $M_{\iota(\ell)}\in \C_{\iota(\ell)}$, and $R$ as an algebra in $\C_{\iota(\ell)}$ via  $\eps_R:\iota(R^\C)=\iota(\ell)\to R$, we obtain that $R$ is a simple solution ring for $M_{\iota(\ell)}$ with $R^\C=\ell$.
Hence by Prop.~\ref{prop:unique-pv-inside-simple-sol-ring}, with $k$ replaced by $\ell$ (and $\C$ by $\C_{\iota(\ell)}$ etc.), there is a unique Picard-Vessiot ring for $M_{\iota(\ell)}$ inside $R$. Indeed also by Prop.~\ref{prop:unique-pv-inside-simple-sol-ring}, this Picard-Vessiot ring is $R$ itself, since the canonical morphism $\iota(\ell)\otimes U\to R$ is an epimorphism, and $\iota(\ell)\otimes U$ is easily seen to be the universal solution ring for $M_{\iota(\ell)}$.
\end{proof}

\section{Picard-Vessiot rings and fibre functors}\label{sec:pv-rings-and-fibre-functors}
Throughout this section, we fix a dualizable object $M\in \C$.
Recall that we denote by $\tenscat{M}$ the strictly full tensor subcategory of $\C$ generated by $M$ and $M^\vee$, i.e.~the full subcategory of $\C$ containing all objects isomorphic to subquotients of direct sums of objects $M^{\otimes n}\otimes (M^\vee)^{\otimes m}$ for $n,m\geq 0$. 

In this section we consider the correspondence between Picard-Vessiot rings $R$ for $M$ and fibre functors $\omega:\tenscat{M}\to \vect_k$.
The main result is Thm.~\ref{thm:pv-rings-equiv-to-fibre-functors} which states that there is a bijection between their isomorphism classes. This generalises \cite[Thm.~3.4.2.3]{ya:dnctgdd} to our abstract setting.

\begin{prop}\label{prop:fibre-functor-associated-to-pv-ring}
Assume $R$ is a Picard-Vessiot ring for $M$. Then the functor
$$\omega_R:\tenscat{M}\to \vect_k, N\mapsto (R\otimes N)^\C$$
is an exact faithful tensor-functor, i.e.~a fibre functor.\\
We call the fibre functor $\omega_R$ the {\markdef fibre functor associated to $R$}.
\end{prop}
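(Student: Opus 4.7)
The plan is to reduce the proof to the single observation that for every $N\in\tenscat{M}$ the counit $\eps_{N_R}$ is an isomorphism; once this is in place, all structure on $\omega_R$ will be inherited from the well-understood behaviour of $\iota_R$ and $()^{\C_R}$. To establish this, note that by hypothesis $R$ is a solution ring for $M$, so $\eps_{M_R}$ is an iso. By Prop.~\ref{prop:subcat-of-trivial-modules}, the full subcategory of $\C_R$ on which $\eps$ is an iso is closed under $\otimes_R$, duals of dualizables, finite direct sums and subquotients. Since the base change $(-)_R:\C\to\C_R$ is an exact tensor functor, the objects $(M^{\otimes n}\otimes(M^\vee)^{\otimes m})_R$ all lie in this subcategory, and hence so do the base changes of all their finite direct sums and subquotients, i.e.~$N_R$ for every $N\in\tenscat{M}$. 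Together with $R^\C=k$, this yields a natural isomorphism $R\otimes N\cong \iota_R(\omega_R(N))\cong R\otimes \iota(\omega_R(N))$ in $\C_R$.

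Next I would verify finite-dimensionality of $\omega_R(N)$. Applying $\upsilon$ to the above isomorphism gives
\[\upsilon(R)\otimes_{\O_\X}\upsilon(N)\;\cong\;\upsilon(R)\otimes_k\omega_R(N)\]
as $\upsilon(R)$-modules. By Lem.~\ref{lemma:dualizables-are-projective-of-finite-rank} the left-hand side is locally free of some constant rank $n$, and by Cor.~\ref{cor:properties-of-simple-minimal-solution-rings} $\upsilon(R)$ is faithfully flat over $\O_\X$, so comparing local ranks forces $\dim_k\omega_R(N)=n<\infty$. Hence $\omega_R$ lands in $\vect_k$, and the same formula will settle faithfulness below.

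For the tensor structure, $\omega_R(\1)=(R\otimes\1)^\C=R^\C=k$, and for binary products the definition $\iota_R(V)=R\otimes_{\iota(R^\C)}\iota(V)$ together with the tensor-functoriality of $\iota$ gives the canonical identity $\iota_R(V_1)\otimes_R\iota_R(V_2)\cong\iota_R(V_1\otimes_k V_2)$ (recall $R^\C=k$). Combining this with the trivialization from the first step yields $R\otimes(N_1\otimes N_2)\cong \iota_R(\omega_R(N_1)\otimes_k\omega_R(N_2))$; applying $()^{\C_R}$, which is inverse to $\iota_R$ on trivial modules by Prop.~\ref{prop:on-iota-r}(ii), gives a natural isomorphism $\omega_R(N_1\otimes N_2)\cong \omega_R(N_1)\otimes_k\omega_R(N_2)$. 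The coherence data are inherited from those of $\iota_R$ and $(-)_R$.

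For exactness and faithfulness, given $0\to N_1\to N_2\to N_3\to 0$ in $\tenscat{M}$, applying the exact $\upsilon$ followed by the flat $\upsilon(R)\otimes_{\O_\X}-$ preserves exactness, and since $\upsilon$ is faithful and exact it reflects short exact sequences; hence $0\to R\otimes N_1\to R\otimes N_2\to R\otimes N_3\to 0$ is exact in $\C_R$. Under the trivialization this is an exact sequence of $\iota_R$-images, so applying $()^{\C_R}$ and again invoking Prop.~\ref{prop:on-iota-r}(ii) yields an exact sequence of $\omega_R(N_i)$. Faithfulness then follows from the dimension formula: if $N\ne 0$ then $\upsilon(N)\ne 0$ has positive constant rank, so $\omega_R(N)\ne 0$. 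The main obstacle I anticipate is the careful bookkeeping between the two tensor structures $\otimes$ in $\C$ and $\otimes_R$ in $\C_R$ and their interaction with $\iota_R$; once the identity $\iota_R(V_1\otimes_k V_2)\cong \iota_R(V_1)\otimes_R\iota_R(V_2)$ is in hand, the rest is a short concatenation of results already proved.
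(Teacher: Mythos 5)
Your proposal is correct and follows essentially the same route as the paper's proof: Prop.~\ref{prop:subcat-of-trivial-modules} gives that $\eps_{N_R}$ is an isomorphism for all $N\in\tenscat{M}$, faithful flatness of $\upsilon(R)$ over $\O_\X$ (Cor.~\ref{cor:properties-of-simple-minimal-solution-rings}) gives exactness and faithfulness of $N\mapsto R\otimes N$, and exactness and faithfulness of $\iota_R$ descend everything to $\vect_k$, with your explicit rank computation for finite-dimensionality and the spelled-out tensor compatibility being details the paper leaves implicit. The only unguarded assertion, that $(-)_R$ is an exact tensor functor (in general it is only right exact), is justified for this particular $R$ by the same faithful flatness of $\upsilon(R)$, exactly as is implicitly used in the paper's appeal to Prop.~\ref{prop:subcat-of-trivial-modules}.
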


\begin{proof}
By definition of a Picard-Vessiot ring, the morphism $\eps_{M_R}:R\otimes_k (R\otimes M)^\C \to R\otimes M$ is an isomorphism. Hence, by Prop.~\ref{prop:subcat-of-trivial-modules}, $\eps_{N_R}$ is an isomorphism for all $N\in \tenscat{M}$.

Recall $R\otimes_k (R\otimes N)^\C=\iota_R((N_R)^\C)=\iota_R(\omega_R(N))$ for all $N$.

As $\upsilon(R)$ is faithfully flat over $\O_\X=\upsilon(\1)$ by Cor.~\ref{cor:properties-of-simple-minimal-solution-rings}, the functor $N\mapsto R\otimes N$ is exact and faithful. Hence, given a short exact sequence $0\to N'\to N\to N''\to 0$ in $\tenscat{M}$, the sequence
$$0\to R\otimes N'\to R\otimes N\to R\otimes N''\to 0$$
is exact, and $R\otimes N=0$ if and only if $N=0$.
Using the isomorphisms  $\eps_{N_R}$ etc. the sequence
$$0\to R\otimes_k  \omega_R(N')\to R\otimes_k  \omega_R(N)\to R\otimes_k  \omega_R(N'')\to 0$$
is exact.
As $\iota_R$ is exact and faithful, this implies that
$$0\to \omega_R(N')\to  \omega_R(N)\to  \omega_R(N'')\to 0$$
is exact. Furthermore, $ \omega_R(N)=0$ if and only if $ R\otimes_k  \omega_R(N)=0$
 if and only if $ R\otimes N=0$ if and only if $N=0$.
 
 It remains to show that $\omega_R$ is a tensor-functor which is already done by showing that 
 $\eps_{(N\otimes N')_R}$ is an isomorphism if $\eps_{N_R}$ and $\eps_{N'_R}$ are.
\end{proof}

Given a fibre functor $\omega:\tenscat{M} \to \vect_k$, we want to obtain a Picard-Vessiot ring associated to $\omega$.\\
Apparently, this Picard-Vessiot ring is already given in the proof of \cite[Thm.~3.2]{pd-jsm:tc}, although the authors don't claim that it is a Picard-Vessiot ring. We will recall the construction to be able to prove the necessary facts:\\
For $N\in \tenscat{M}$, one defines $P_N$ to be the largest subobject of $N\otimes_k \omega(N)^\vee$ such that for all $n\geq 1$ and all subobjects $N'\subseteq N^n$, the morphism
$$P_N\to N\otimes_k \omega(N)^\vee\xrightarrow{\text{diag}}N^n\otimes_k \omega(N^n)^\vee
\to N^n\otimes_k \omega(N')^\vee$$
factors through $N'\otimes_k \omega(N')^\vee$.\\
For monomorphisms $g:N'\to N$ and epimorphisms $g:N\to N'$, one obtains morphisms $\phi_{g}:P_N\to P_{N'}$, and therefore
$$R_\omega:= \varinjlim\limits_{N\in\tenscat{M}} P_N^\vee\in{\rm Ind}(\tenscat{M})\subseteq \C$$
is welldefined. The multiplication $\mu_{R_\omega}:R_\omega\otimes R_\omega\to R_\omega$ is induced by the natural morphisms $P_{N\otimes L}\to P_N\otimes P_L$ via dualizing and taking inductive limits.

\begin{lemma}\label{lem:R-omega-representing}
The functor $\C{\rm -}\alg \to \sets$ which associates to each $\C$-algebra $R'$ the set of natural tensor-transformations from the functor $R'\otimes (\iota\circ \omega):\tenscat{M}\to \C_{R'}$ to the functor $R'\otimes \id_{\tenscat{M}}:\tenscat{M}\to \C_{R'}$ is represented by the $\C$-algebra $R_\omega$, i.e.~there is a natural bijection between the natural transformations $R'\otimes (\iota\circ \omega)\to R'\otimes \id_{\tenscat{M}}$ of tensor functors and the morphisms of $\C$-algebras
$R_\omega\to R'$.
\end{lemma}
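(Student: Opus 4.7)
The plan is to produce a universal natural tensor-transformation $\lambda:R_\omega\otimes(\iota\circ\omega)\to R_\omega\otimes\id_{\tenscat{M}}$ living over $R_\omega$, and then show that pullback along $\C$-algebra morphisms $R_\omega \to R'$ gives the claimed bijection. First, for each $N\in\tenscat{M}$, I would dualize the defining monomorphism $P_N\hookrightarrow N\otimes_k \omega(N)^\vee$ to an epimorphism $N^\vee\otimes_k \omega(N)\twoheadrightarrow P_N^\vee$ (all objects in $\tenscat{M}$ are dualizable, since $\tenscat{M}$ is Tannakian by Rem.~\ref{rem:dualizables-are-projective}), compose with the structure morphism $P_N^\vee\to R_\omega$ of the inductive limit, and transpose across both the base-change/forgetful adjunction for $\C_{R_\omega}$ and the duality of $N$ to obtain the component $\lambda_N:R_\omega\otimes\iota(\omega(N))\to R_\omega\otimes N$ in $\C_{R_\omega}$. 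The factorization property built into the definition of $P_N$ is exactly what ensures naturality of $\lambda$ in $N$; the comparison morphisms $P_{N\otimes L}\to P_N\otimes P_L$ (which also define $\mu_{R_\omega}$) yield tensor-functor compatibility.

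For the forward direction of the bijection, a morphism of $\C$-algebras $\phi:R_\omega\to R'$ induces $\phi_*\lambda$ by base change. For the inverse, given a natural tensor-transformation $\mu:R'\otimes(\iota\circ\omega)\to R'\otimes\id_{\tenscat{M}}$, the component $\mu_N$ is repackaged via the same adjunction and the duality of $N$ into a morphism $\tilde\mu_N:N^\vee\otimes_k\omega(N)\to R'$ in $\C$. The central claim is that $\tilde\mu_N$ factors through the epimorphism $N^\vee\otimes_k\omega(N)\twoheadrightarrow P_N^\vee$; granting this, assembling the resulting morphisms $P_N^\vee\to R'$ and passing to the inductive limit defining $R_\omega$ yields a morphism $\phi_\mu:R_\omega\to R'$ in $\C$. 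Its multiplicativity follows from the tensor-compatibility of $\mu$ together with the fact that $\mu_{R_\omega}$ is built from the same comparison morphisms $P_{N\otimes L}\to P_N\otimes P_L$, and uniqueness is automatic because each step of the construction is forced.

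The main obstacle is verifying that $\tilde\mu_N$ factors through $P_N^\vee$. Unpacking, $P_N$ is the intersection inside $N\otimes_k\omega(N)^\vee$ of the preimages of $N'\otimes_k\omega(N')^\vee$ under the composite
$$N\otimes_k\omega(N)^\vee \xrightarrow{\text{diag}} N^n\otimes_k\omega(N^n)^\vee \to N^n\otimes_k\omega(N')^\vee,$$
taken over all $n\geq 1$ and all subobjects $N'\subseteq N^n$; dually, the kernel of $N^\vee\otimes_k\omega(N)\twoheadrightarrow P_N^\vee$ is generated by the images of the dualized morphisms for each such $(n,N')$. To show that $\tilde\mu_N$ annihilates each such image, one applies naturality of $\mu$ with respect to the inclusion $N'\hookrightarrow N^n$ and the diagonal-type morphisms $N\to N^n$, combined with the fact that $\omega$ is an exact faithful tensor functor and that the adjunction identifications are natural in all variables. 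Once this compatibility is in place, the remaining verifications -- functoriality of $\phi\mapsto\phi_*\lambda$, the identity $\phi_{\phi_*\lambda}=\phi$, and its converse $\mu_{\phi_\mu}=\mu$ -- are routine diagram chases in $\C$.
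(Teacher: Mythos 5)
Your proposal is correct and follows essentially the same route as the paper: both convert a component of a natural transformation, via the base-change/forgetful adjunction and dualizability of $N$, into a morphism involving $P_N$ (your $\Mor_\C(P_N^\vee,R')$ is the paper's $\Mor_\C(\1,R'\otimes P_N)$), identify the naturality constraints with the defining factorization property of $P_N$ together with compatibility with the transition morphisms $\phi_g$, and then assemble everything through $R_\omega=\varinjlim_N P_N^\vee$, with the tensor/algebra structure handled by the comparison morphisms $P_{N\otimes L}\to P_N\otimes P_L$. Your explicit universal transformation $\lambda$ over $R_\omega$ is just the Yoneda-style repackaging of the paper's direct chain of bijections (it is what the identity of $R_\omega$ corresponds to), so the two arguments coincide in substance.
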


\begin{proof} 
Let $R'$ be a $\C$-algebra, and let $\alpha$ be a natural transformation not necessarily respecting the tensor structure. Then for every
 $N\in \tenscat{M}$ one has a morphism 
\begin{eqnarray*}
\alpha_N & \in & \Mor_{\C_{R'}}(R'\otimes \iota(\omega(N)), R'\otimes N)
\simeq \Mor_{\C}(\iota(\omega(N)), R'\otimes N) \\
&\simeq & \Mor_{\C}(\1, R'\otimes N\otimes \iota(\omega(N))^\vee)= (R'\otimes N\otimes \iota(\omega(N))^\vee)^\C
\end{eqnarray*}


It is straight forward to check that such a collection of morphisms
$(\alpha_N)_{N}$ where $\alpha_N\in  \Mor_{\C}(\1, R'\otimes N\otimes \iota(\omega(N))^\vee)$ defines a natural transformation if and only if 
$\alpha_N\in  \Mor_{\C}(\1, R'\otimes P_N)$ for all $N$, and $\alpha_{N'}=(\id_{R'}\otimes \phi_{g})\circ \alpha_N$ whenever $\phi_{g}:P_N\to P_{N'}$ is defined.\\
On the other hand, one has
\begin{eqnarray*}
\Mor_{\C}(R_\omega,R') &=& \Mor_{\C}(\varinjlim\limits_{N\in\tenscat{M}} P_N^\vee, R') \\
&=& \varprojlim\limits_{N\in\tenscat{M}}\Mor_{\C}(P_N^\vee, R') \simeq
\varprojlim\limits_{N\in\tenscat{M}}\Mor_{\C}(\1,R'\otimes P_N)
\end{eqnarray*}
Hence, giving such a compatible collection of morphisms $\alpha_N$ is equivalent to giving a $\C$-morphism $R_\omega\to R'$.\\
It is also not hard to check that the natural transformations that respect the tensor structure correspond to morphisms of $\C$-algebras $R\to R'$ under this identification. 
\end{proof}

Before we show that $R_\omega$ is a simple solution ring for $M$, we need some more results from \cite{pd-jsm:tc} resp. from \cite{pd:ct}:\\
As $\omega$ has values in $k$-vector spaces, $\tenscat{M}$ together with $\omega$ is a neutral Tannakian category (see \cite{pd:ct}), and therefore equivalent to the category of representations of the algebraic group scheme $G=\underline{\Aut}^\otimes(\omega)$.\\
This also induces an equivalence of their ind-categories, and $R_\omega$ corresponds to the group ring $k[G]$ with the right regular representation (cf.~proof of \cite[Theorem 3.2]{pd-jsm:tc}).

%

\begin{prop}\label{prop:pv-ring-associated-to-fibre-functor}
The object $R_\omega\in {\rm Ind}(\tenscat{M})\subseteq \C$ associated to $\omega$ is a
simple solution ring for $M$, and satisfies $(R_\omega)^\C=k$.
\end{prop}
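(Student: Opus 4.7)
The plan is to transfer everything to the equivalent Tannakian setting $\Rep_k(G)$, where $G := \underline{\Aut}^\otimes(\omega)$. Tannakian reconstruction (see \cite{pd:ct}) gives an equivalence $F\colon \tenscat{M} \xrightarrow{\sim} \Rep_k(G)$ that extends to ind-categories, under which $\omega$ corresponds to the forgetful functor and, as recalled just before the proposition, $R_\omega$ corresponds as a commutative algebra to $A := k[G]$ carrying the right regular $G$-representation.

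First I would check $(R_\omega)^\C = k$. Under $F$ the functor $(-)^\C = \Mor_\C(\1, -)$ corresponds to $(-)^G$, so $(R_\omega)^\C \cong A^G$. Since the coaction corresponding to the right regular representation is the comultiplication $\Delta$, applying $\eps \otimes \id$ to $\Delta(a) = a \otimes 1$ forces $a = \eps(a) \cdot 1 \in k$, giving $A^G = k$.

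Second, to establish the solution-ring property, I would apply Lemma \ref{lem:R-omega-representing} to $\id_{R_\omega}$ to produce a natural tensor transformation $\alpha\colon R_\omega \otimes \iota(\omega(-)) \Rightarrow R_\omega \otimes (-)$. Under $F$, with $V := \omega(N)$, the morphism $\alpha_N$ becomes the canonical trivialization $V_{\mathrm{triv}} \otimes A \xrightarrow{\sim} V \otimes A$ of an arbitrary $G$-representation after tensoring with the regular one, whose inverse is built from the antipode of $A$. In particular $\alpha_M$ is an isomorphism in $\C_{R_\omega}$; combined with $(R_\omega)^\C = k$ and the adjunction $\iota_{R_\omega} \dashv (-)^{\C_{R_\omega}}$ of Prop.~\ref{prop:on-iota-r}, one identifies $\alpha_M$ with $\eps_{M_{R_\omega}}$, yielding that $R_\omega$ is a solution ring for $M$.

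Finally, for simplicity, a $\C$-ideal $I \subseteq R_\omega$ corresponds under $F$ to an ideal $J \subseteq A$ with $\Delta(J) \subseteq J \otimes A$, which makes $J$ a sub-Hopf-module of $A$ (compatibly a right $A$-module via $\mu$ and a right $A$-comodule via $\Delta$). By the fundamental theorem on Hopf modules $J \cong J^{coA} \otimes A$, and $J^{coA} \subseteq A^{coA} = k$ forces $J \in \{0, A\}$. The main subtlety I anticipate is checking carefully that the ind-extension of $F$ matches the notion of ``$\C$-ideal of $R_\omega$'' with ``($\mu$-ideal and $\Delta$-sub-comodule of $A$)''; this should follow from the explicit description of ind-extensions in \cite[\S 6]{pd:ct}.
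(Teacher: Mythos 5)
Your proof is correct and follows the same skeleton as the paper's: transfer along the ind-extended equivalence $\tenscat{M}\simeq\Rep_k(G)$ under which $R_\omega$ becomes $k[G]$ with the right regular representation, compute $(R_\omega)^\C\cong k[G]^G=k$, obtain the universal natural transformation $\alpha$ from Lemma~\ref{lem:R-omega-representing} applied to $\id_{R_\omega}$, and get simplicity from the absence of non-trivial $G$-stable ideals in $k[G]$. You differ in how you justify the two non-trivial sub-steps. For the isomorphy of $\alpha_M$, the paper simply cites \cite[Prop.~1.13]{pd-jsm:tc}: a tensor natural transformation between tensor functors out of a rigid category is automatically an isomorphism; you instead identify $\alpha$ with the canonical comodule trivialization $V_{\mathrm{triv}}\otimes k[G]\xrightarrow{\sim}V\otimes k[G]$ whose inverse uses the antipode. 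That is a legitimate, more explicit route, but it requires you to actually verify that the universal transformation attached to $\id_{R_\omega}$ is this map under the equivalence (unwinding the proof of Deligne--Milne Thm.~3.2), whereas the rigidity argument needs no identification at all. For simplicity, the paper merely asserts that $k[G]$ has no non-trivial $G$-stable ideals; your fundamental-theorem-of-Hopf-modules argument ($J\cong J^{\mathrm{co}A}\otimes A$ with $J^{\mathrm{co}A}\subseteq A^{\mathrm{co}A}=k$) is a clean proof of exactly that assertion, so here you supply detail the paper omits. One small ordering caveat: to pass from ``$\alpha_M$ is an isomorphism'' to ``$\eps_{M_{R_\omega}}$ is an isomorphism'' you need more than the bare adjunction --- the paper uses Lemma~\ref{lemma:when-eps-is-iso}, whose proof rests on $\eta$ being an isomorphism, i.e.\ on the hypotheses of Prop.~\ref{prop:on-iota-r}, which are verified here because $R_\omega$ is simple. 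So you should either establish simplicity before the solution-ring step (as the paper does; your simplicity argument is independent, so reordering is harmless) or check the hypotheses of Prop.~\ref{prop:on-iota-r} directly; as written, ``one identifies $\alpha_M$ with $\eps_{M_{R_\omega}}$'' glosses over this point.
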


\begin{rem}\label{rem:pv-ring-associated-to-fibre-functor}
By Prop.~\ref{prop:unique-pv-inside-simple-sol-ring}, $R_\omega$ therefore contains a unique Picard-Vessiot ring for $M$. This Picard-Vessiot ring will be called the {\markdef PV-ring associated to $\omega$}.
Indeed, $R_\omega$ is already minimal and hence a Picard-Vessiot ring itself. This will be seen at the end of the proof of Thm.~\ref{thm:pv-rings-equiv-to-fibre-functors}.
There is also a way of directly showing that $R_\omega$ is isomorphic to a quotient of the universal solution ring for $M$ which would also imply that $R_\omega$ is a PV-ring (cf.~Cor.~\ref{cor:special-quotients-are-pv-rings}). But we don't need this here, so we will omit it.
\end{rem}

\begin{proof}
As $\omega$ defines an equivalence of categories $\tenscat{M}\to {\rm Rep}_k(G)$ (and also of their ind-categories), and  $\omega(R_\omega)=k[G]$, one obtains
$$(R_\omega)^{\C}=\Mor_\C(\1,R_\omega)\simeq \Hom_G(k,k[G])=k[G]^G=k.$$
For showing that $R_\omega$ is simple, let $I\ne R_\omega$ be an ideal of $R_\omega$ in $\C$. We even have $I\in {\rm Ind}(\tenscat{M})$, as it is a subobject of $R$. By the equivalence of categories $\omega(I)$ belongs to ${\rm Ind}({\rm Rep}_k(G))$, and $\omega(I)$ is an ideal of $\omega(R_\omega)=k[G]$. But $k[G]$ does not have non-trivial $G$-stable ideals. Hence, $\omega(I)=0$, and therefore $I=0$.

As seen in Lemma \ref{lem:R-omega-representing}, $\id_{R_\omega}\in \Mor_{\C}(R_\omega,R_\omega)$ induces a natural transformation $\alpha:R_\omega\otimes (\iota\circ \omega) \to R_\omega\otimes \id_{\tenscat{M}}$, in particular it induces a $\C_{R_\omega}$-morphism $\alpha_M:R_\omega\otimes \iota(\omega(M))\to R_\omega\otimes M$.
By \cite[Prop.~1.13]{pd-jsm:tc}, such a natural transformation is an equivalence, as $\tenscat{M}$ is rigid\footnote{Rigidity of the target category which is assumed in loc.~cit.~is not needed. See also \cite[Prop.~1.1]{ab:ttnc}.}. Therefore, the morphism $\alpha_M$ is an isomorphism. As $R_\omega\otimes \iota(\omega(M))=\iota_{R_\omega}(\omega(M))$, Lemma \ref{lemma:when-eps-is-iso} implies that $\eps_{M_R}$ is an isomorphism.\\
Hence, $R_\omega$ is a solution ring for $M$.


%

\end{proof}

\begin{thm}\label{thm:pv-rings-equiv-to-fibre-functors}
Let $M\in \C$ be dualizable,
and let $\ell$ be a field extension of $k$. Then there is a bijection between isomorphism classes of Picard-Vessiot rings $R$ for $M_{\iota(\ell)}$ over $\wtilde{\1}:=\iota(\ell)$ and isomorphism classes of fibre functors $\omega$ from $\tenscat{M_{\iota(\ell)}}$ into $\ell$-vector spaces.\\
This bijection is induced by $R\mapsto \omega_R$ and $\omega\mapsto (\text{PV-ring inside }R_\omega)$ given in Prop.~\ref{prop:fibre-functor-associated-to-pv-ring} and Rem.~\ref{rem:pv-ring-associated-to-fibre-functor}, respectively.
\end{thm}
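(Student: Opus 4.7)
The plan is to verify that the maps $\Phi\colon R\mapsto \omega_R$ (Prop.~\ref{prop:fibre-functor-associated-to-pv-ring}) and $\Psi\colon \omega\mapsto (\text{PV-ring inside }R_\omega)$ (Rem.~\ref{rem:pv-ring-associated-to-fibre-functor}) are mutually inverse on isomorphism classes. A routine reduction by replacing $\C$ with $\C_{\iota(\ell)}$ lets me assume $\ell=k$: the setup (C1)--(F2) passes to $\C_{\iota(\ell)}$ since $\iota(\ell)$ is simple in $\C$ by Cor.~\ref{cor:still-simple::abstract}.

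For $\Phi\circ\Psi\cong\id$, I would first establish the key intermediate statement $\omega_{R_\omega}\cong \omega$. By Prop.~\ref{prop:pv-ring-associated-to-fibre-functor}, $R_\omega$ is a simple solution ring with $(R_\omega)^{\C}=k$, and from its proof together with Prop.~\ref{prop:subcat-of-trivial-modules} the morphism $\varepsilon_{N_{R_\omega}}\colon \iota_{R_\omega}(\omega(N))\to R_\omega\otimes N$ is an isomorphism for every $N\in\tenscat{M}$. Applying $()^{\C_{R_\omega}}$ to this iso and combining with $\eta_{\omega(N)}$ from Prop.~\ref{prop:on-iota-r}(ii) (which applies because $R_\omega$ is simple) yields a natural iso $\omega(N)\cong \omega_{R_\omega}(N)$. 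Setting $R':=\Psi(\omega)$, the inclusion $R'\hookrightarrow R_\omega$ induces an inclusion $\omega_{R'}(N)\hookrightarrow \omega_{R_\omega}(N)$; since both $R'$ and $R_\omega$ are solution rings for each $N\in\tenscat{M}$ (Prop.~\ref{prop:subcat-of-trivial-modules}), and $\upsilon(R'),\upsilon(R_\omega)$ are faithfully flat (Cor.~\ref{cor:properties-of-simple-minimal-solution-rings} and Prop.~\ref{prop:properties-of-quotients-of-U}), both $k$-vector spaces have dimension equal to the rank of $\upsilon(N)$. Hence the inclusion is an iso and $\omega_{R'}\cong \omega$.

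For $\Psi\circ\Phi\cong\id$, given a PV-ring $R$ for $M$ I would use that the isomorphisms $\varepsilon_{N_R}\colon R\otimes\iota(\omega_R(N))\xrightarrow{\sim} R\otimes N$ assemble into a natural tensor isomorphism $R\otimes (\iota\circ\omega_R)\xrightarrow{\sim} R\otimes \id_{\tenscat{M}}$. By Lemma~\ref{lem:R-omega-representing} this corresponds to a nonzero $\C$-algebra morphism $\phi\colon R_{\omega_R}\to R$. Simplicity of $R_{\omega_R}$ (Prop.~\ref{prop:pv-ring-associated-to-fibre-functor}) forces $\phi$ to be a monomorphism, so $\phi(R_{\omega_R})\subseteq R$ is a solution ring for $M$ isomorphic to $R_{\omega_R}$; minimality of $R$ as a PV-ring then forces $\phi(R_{\omega_R})=R$, so $\phi$ is an isomorphism, giving $\Psi(\omega_R)\cong R_{\omega_R}\cong R$.

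As a byproduct I would deduce that $R_\omega$ itself is a PV-ring (as promised in Rem.~\ref{rem:pv-ring-associated-to-fibre-functor}): applying the previous paragraph to the PV-subring $R'\subseteq R_\omega$ yields $R_{\omega_{R'}}\cong R'$, while $\omega_{R'}\cong\omega$ gives $R_{\omega_{R'}}\cong R_\omega$ (the assignment $\omega\mapsto R_\omega$ respecting isomorphism of tensor functors via its universal property in Lemma~\ref{lem:R-omega-representing}). Hence $R'\cong R_\omega$ as $\C$-algebras, and since being a PV-ring is isomorphism-invariant (minimality in particular), $R_\omega$ is itself a PV-ring; the uniqueness of the PV-ring inside a simple solution ring with constants $k$ (Prop.~\ref{prop:unique-pv-inside-simple-sol-ring}) then forces $R'=R_\omega$. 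The main conceptual obstacle is the intrinsic proof of $\omega_{R_\omega}\cong\omega$ (classically handled by the Tannakian identification $k[G]\otimes V\cong k[G]\otimes V_{\mathrm{triv}}$ followed by taking $G$-invariants); in the abstract setting this is replaced by the adjunction $\iota_{R_\omega}\dashv ()^{\C_{R_\omega}}$ of Prop.~\ref{prop:on-iota-r} together with the dimension count furnished by the faithful flatness of $\upsilon(R_\omega)$.
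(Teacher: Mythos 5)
Your argument is correct, and its two pillars coincide with the paper's: the natural isomorphisms $\iota_{R_\omega}(\omega(N))\to N_{R_\omega}$ furnished by the proof of Prop.~\ref{prop:pv-ring-associated-to-fibre-functor}, turned into $\omega\cong\omega_{R_\omega}$ by the adjunction and Prop.~\ref{prop:on-iota-r}(ii); and, in the other direction, Lemma~\ref{lem:R-omega-representing} applied to the isomorphisms $\eps_{N_R}$, with simplicity of $R_{\omega_R}$ and minimality of $R$ forcing the resulting algebra morphism to be an isomorphism, whence $R_{\omega_R}\cong R$ and $R_\omega$ is itself a PV-ring. Where you genuinely differ is the bridging step for an arbitrary fibre functor: the paper asserts the natural isomorphisms directly for ``the corresponding PV-ring'' (strictly, the proof of Prop.~\ref{prop:pv-ring-associated-to-fibre-functor} gives them for $R_\omega$, and $R_\omega$ is identified as minimal only in the last sentence), whereas you first prove $\omega\cong\omega_{R_\omega}$ and then show that the inclusion of the PV-ring $R'\subseteq R_\omega$ induces an isomorphism $\omega_{R'}\cong\omega_{R_\omega}$ by a rank count based on faithful flatness. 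This extra step makes the case of a fibre functor not a priori of the form $\omega_R$ more transparent, at the price of needing the flatness input; the paper's shortcut avoids the count but leans on the final minimality statement.

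Two small repairs. First, the map you call $\eps_{N_{R_\omega}}\colon\iota_{R_\omega}(\omega(N))\to N_{R_\omega}$ is not the counit (whose source is $\iota_{R_\omega}(\omega_{R_\omega}(N))$, so applying $()^{\C_{R_\omega}}$ to it alone yields a tautology); it is the component $\alpha_N$ of the natural transformation attached to $\id_{R_\omega}$ via Lemma~\ref{lem:R-omega-representing}, which is an isomorphism for every $N$ because that transformation is an equivalence (Deligne--Milne, as invoked in the proof of Prop.~\ref{prop:pv-ring-associated-to-fibre-functor}); Prop.~\ref{prop:subcat-of-trivial-modules} is not the tool that upgrades $\alpha_M$ to all $\alpha_N$. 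Second, Prop.~\ref{prop:properties-of-quotients-of-U} does not literally apply to $R_\omega$, since the paper never exhibits $R_\omega$ as a quotient of $U$ (Rem.~\ref{rem:pv-ring-associated-to-fibre-functor} explicitly omits this); faithful flatness of $\upsilon(R_\omega)$ over $\O_\X$ should instead be obtained by the same argument as in the proof of that proposition, namely that $R_\omega$ is an algebra in ${\rm Ind}(\tenscat{M})$ together with Deligne's Lemma 6.11. With these adjustments your proof is complete.
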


\begin{proof}
Clearly isomorphic Picard-Vessiot rings give rise to isomorphic fibre functors and isomorphic fibre functors give rise to isomorphic Picard-Vessiot rings. Hence, we only have to show that the maps are inverse to each other up to isomorphisms.\\
By working directly in the category $\C_{\iota(\ell)}$ we can assume that $\ell=k$.

On one hand, for given $\omega$ and corresponding PV-ring $R$, one has natural isomorphisms 
$$\iota_R(\omega(N))=R\otimes_k \omega(N)\to N_R$$
(see proof of Prop.~\ref{prop:pv-ring-associated-to-fibre-functor}). By adjunction these correspond to natural isomorphisms 
$$\lambda_N:\omega(N)\cong(N_R)^\C=\omega_R(N),$$ 
i.e.~the functors $\omega$ and $\omega_R$ are isomorphic.

Conversely, given a Picard-Vessiot ring $R$ and associated fibre functor $\omega_R$, let $R_\omega$ be the simple solution ring constructed above.\\
As $\iota_R=R\otimes \iota$ and $(N_R)^{\C_R}=\omega_R(N)$ for all $N\in \tenscat{M}$, the natural isomorphisms $\eps_{N_R}:\iota_R\left( (N_R)^{\C_R} \right) \to N_R$ form a natural transformation
$R\otimes (\iota\circ \omega_R) \to R\otimes \id_{\tenscat{M}}$.
By Lemma \ref{lem:R-omega-representing}, this natural transformation corresponds to a morphism of $\C$-algebras $\varphi:R_\omega\to R$. As $R_\omega$ is a simple $\C$-algebra, $\varphi$ is a monomorphism. But $R$ is a minimal solution ring, and hence $\varphi$ is even an isomorphism. Therefore, $R_\omega$ is isomorphic to $R$ and already minimal, i.e.~$R_\omega$ is a Picard-Vessiot ring itself.
\end{proof}

\section{Galois group schemes}\label{sec:galois-groups}

Given a dualizable object $M\in \C$ and a Picard-Vessiot ring $R$ for $M$, one considers the group functor
$$\underline{\Aut}_{\C-\text{alg}}(R): \alg_k \to \groups$$
which associates to each $k$-algebra $D$ the group of automorphisms of $R\otimes_k D$ as an algebra in $\C_{\iota(D)}$, i.e.~the subset of $\Mor_{\C_{\iota(D)}}(R\otimes_k D,R\otimes_k D)$ consisting of all isomorphisms which are compatible with the algebra structure of $R\otimes_k D$.\\
This functor is called the {\markdef Galois group of $R$} over $\1$.

On the other hand, given a fibre functor $\omega:\tenscat{M}\to \vect_k$, one considers 
the group functor
$$\underline{\Aut}^\otimes(\omega): \alg_k \to \groups$$
which associates to each $k$-algebra $D$ the group of natural automorphisms of the functor
$D\otimes_k \omega: N\mapsto D\otimes_k\omega(N)$.\\
As $\tenscat{M}$ together with the fibre functor $\omega$ is a neutral Tannakian category, this group functor is called the {\markdef Tannakian Galois group} of $(\tenscat{M},\omega)$. In \cite{pd:ct} it is shown that this group functor is indeed an algebraic group scheme.

The aim of this section is to show that both group functors are isomorphic algebraic group schemes if $\omega=\omega_R$ is the fibre functor associated to $R$.

\medskip

We start by recalling facts about group functors, (commutative) Hopf-algebras and affine group schemes. All of this can be found in \cite{ww:iags}.

A group functor $ \alg_k \to \groups$ is an affine group scheme over $k$ if it is representable by a commutative algebra over $k$. This commutative algebra then has a structure of a Hopf-algebra. The group functor is even an algebraic group scheme (i.e.~of finite type over $k$) if the corresponding Hopf-algebra is finitely generated.\\
The category of commutative Hopf-algebras over $k$ and the category of affine group schemes over $k$ are equivalent. This equivalence is given by taking the spectrum of a Hopf-algebra in one direction and by taking the ring of regular functions in the other direction.\\
For a Hopf-algebra $H$ over $k$, and corresponding  affine group scheme 
$\G:=\spec(H)$, the category $\Comod(H)$ of right comodules of $H$ and the category $\Rep(\G)$ of representations of $\G$ are equivalent.
This equivalence is given by attaching to a comodule $V$ with comodule map $\rho:V\to V\otimes_k H$ the following representation $\varrho:\G\to \End(V)$ of $\G$:
For any $k$-algebra $D$ and $g\in \G(D)=\Hom_{k\rm{-alg}}(H,D)$, the endomorphism $\varrho(g)$ on $V\otimes_k D$ is the $D$-linear extension of
$$g\circ \rho:V\to  V\otimes_k H\to V\otimes_k D.$$
On the other hand, for any representation $\varrho:\G\to \End(V)$, the universal element $\id_H\in \Hom_{k\rm{-alg}}(H,H)=\G(H)$ gives a $H$-linear homomorphism $\varrho(\id_H):V\otimes_k H\to V\otimes_k H$, and its restriction to $V\otimes 1$ is the desired comodule map $\rho:V\to V\otimes_k H$.

\medskip

For showing that the group functors $\underline{\Aut}_{\C-\text{alg}}(R)$ and
$\underline{\Aut}^\otimes(\omega_R)$ are isomorphic algebraic group schemes, we show that they are both represented by the $k$-vector space $H:=(R\otimes R)^\C=\omega_R(R)$. The next lemma shows that $H$ is a finitely generated (commutative) $k$-Hopf-algebra, and hence $\spec(H)$ is an algebraic group scheme over $k$.

\begin{rem}
This fact is shown for differential modules over algebraically closed constants in \cite[Thm.~2.33]{mvdp-mfs:gtlde}, and for t-motives in \cite[Sections 3.5-4.5]{mp:tdadmaicl}.
\end{rem}

\begin{lemma}\label{lemma:H-is-Hopf-algebra}
Let $R$ be a PV-ring for $M$ and $H:=\omega_R(R)=(R\otimes R)^\C$.
\begin{enumerate}
\item The morphism $\eps_{R_R}:R\otimes_k H \to R_R=R\otimes R$ is an isomorphism in $\C_R$ (with $R$-module structure on $R\otimes R$ given on the first factor).
\item $H$ is a finitely generated commutative $k$-algebra where the structure maps $u_H:k\to H$ (unit), $\mu_H:H\otimes_k H\to H$ (multiplication) are given by
$$u_H:=\omega_R(u_R) \quad \text{and} \quad \mu_H:=\omega_R(\mu_R),$$
respectively.
\item The $k$-algebra $H$ is even a Hopf-algebra where the structure maps $c_H:H\to k$ (counit), $\Delta:H\to H\otimes_k H$ (comultiplication) and $s:H\to H$ (antipode) are given as follows:
Counit and antipode are given by
$$c_H:=(\mu_R)^\C  \quad \text{and} \quad  s:=(\tau)^\C,$$
respectively, where $\tau\in \Mor_\C(R\otimes R, R\otimes R)$ denotes the twist morphism.
The comultiplication is given by
$$\Delta:=\omega_R\bigl( \eps_{R_R}^{-1}\circ (u_R\otimes \id_R)\bigr)\ 
\footnote{Hence, $\Delta$ is the image under $\omega_R$ of the morphism
$R\xrightarrow{u_R \otimes \id_R} R\otimes R  \xrightarrow{\eps_{R_R}^{-1}} R\otimes_k H$}$$
\end{enumerate}
\end{lemma}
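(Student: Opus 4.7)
My plan is to prove (i) first, by reducing to the known iso-ness of $\eps_{N_R}$ for $N\in\tenscat{M}$, and then to deduce (ii) and (iii) by transporting the algebra and Hopf-algebra structures of $R$ to $H$ along the isomorphism in part (i). For (i), note that by Thm.~\ref{thm:simple-minimal-solution-rings-are-quotients} the PV-ring $R$ is an epimorphic image of the universal solution ring $U$, and the construction of $U$ in the proof of Thm.~\ref{thm:exists-sol-ring} exhibits $U$ as an object of ${\rm Ind}(\tenscat{M})$; hence so is $R$. By Prop.~\ref{prop:fibre-functor-associated-to-pv-ring} the morphism $\eps_{N_R}$ is an isomorphism for every $N\in\tenscat{M}$, and the subcategory of $\C_R$ on which $\eps$ is an isomorphism is closed under small inductive limits by Prop.~\ref{prop:subcat-of-trivial-modules}. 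Writing $R=\varinjlim_i N_i$ with $N_i\in\tenscat{M}$ and using that base change preserves colimits to get $R_R=\varinjlim_i (N_i)_R$ in $\C_R$, one concludes that $\eps_{R_R}$ is an isomorphism.

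For (ii), the commutative $k$-algebra structure on $H=\omega_R(R)$ is obtained by applying the tensor functor $\omega_R$ (extended to ${\rm Ind}(\tenscat{M})$) to the commutative algebra structure maps of $R$: then $u_H=\omega_R(u_R)$ and $\mu_H=\omega_R(\mu_R)$ automatically inherit associativity, unit, and commutativity from the corresponding identities on $R$. For the finite generation, part (i) together with the exactness and tensor-functoriality of $\upsilon$ yield an isomorphism of $\upsilon(R)$-algebras
$$\upsilon(R)\otimes_{\O_\X}\upsilon(R)\;\cong\;\upsilon(R)\otimes_k H.$$
By Prop.~\ref{prop:properties-of-quotients-of-U}, $\upsilon(R)$ is finitely generated as an $\O_\X$-algebra, so the left-hand side is finitely generated as a $\upsilon(R)$-algebra. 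Localizing to a nonempty affine open $\U\subseteq\X$, the ring $\tR:=\upsilon(R)(\U)$ is a nonzero $k$-algebra and therefore faithfully flat over $k$; choosing finitely many $\tR$-algebra generators of $\tR\otimes_k H$ involves only finitely many elements $h_1,\ldots,h_N\in H$, and the $k$-subalgebra $H_0:=k[h_1,\ldots,h_N]\subseteq H$ then satisfies $\tR\otimes_k H_0=\tR\otimes_k H$, whence $H_0=H$ by faithful flatness.

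For (iii), each Hopf-algebra axiom can be verified by pulling it back via $\eps_{R_R}$ to a diagram in $\C$ involving only the structure maps of $R$: the counit axiom for $c_H=(\mu_R)^\C$ follows from $\mu_R\circ(u_R\otimes\id_R)=\id_R$, the fact that $s=\tau^\C$ is an involution from $\tau^2=\id_{R\otimes R}$, and coassociativity of $\Delta$ together with the antipode identity $\mu_H\circ(s\otimes\id_H)\circ\Delta=u_H\circ c_H$ reduce, after unwinding the definition of $\Delta$ and applying associativity and commutativity of $\mu_R$, to formal consequences of the algebra axioms on $R$. The main obstacle I anticipate is the antipode axiom: the definition of $\Delta$ uses $\eps_{R_R}^{-1}$ in an essential way, and this interacts nontrivially with the swap of the two $R$-module structures on $R\otimes R$ that defines $s$. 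A cleaner alternative, which also yields finite generation in (ii) for free, is to use Thm.~\ref{thm:pv-rings-equiv-to-fibre-functors} to identify $R$ with $R_{\omega_R}$ and transport the standard Hopf-algebra structure on $k[G]$ of the Tannakian group $G=\underline{\Aut}^\otimes(\omega_R)$, which is an algebraic affine group scheme by the Tannakian formalism of~\cite{pd:ct}.
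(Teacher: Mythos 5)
Your parts (i) and (ii) are essentially correct. Part (i) is the paper's own argument: $R$ is a quotient of the universal solution ring (Thm.~\ref{thm:simple-minimal-solution-rings-are-quotients}), hence an ind-object of $\tenscat{M}$, and Prop.~\ref{prop:subcat-of-trivial-modules} gives closure under inductive limits of the class of objects on which $\eps$ is an isomorphism. In (ii) the transport of the algebra structure is as in the paper, while your finite-generation argument takes a different route: the paper applies $\omega_R$ to a presentation of $R$ as a quotient of $U$ (``as in Prop.~\ref{prop:properties-of-quotients-of-U}''), whereas you use part (i), apply $\upsilon$, and descend finite generation along the faithfully flat extension $k\to \upsilon(R)(\U)$ for an affine open $\U\subseteq \X$. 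This works, provided you note (as the paper implicitly does in the torsor statement of Thm.~\ref{thm:Aut-R-represented-by-H}) that $\eps_{R_R}$ is a morphism of algebras, so that your displayed isomorphism is one of $\upsilon(R)$-algebras.

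Part (iii) is where there is a genuine gap. Your reduction of coassociativity (and the other axioms) to ``formal consequences of the algebra axioms on $R$'' misidentifies the mechanism: $\Delta$ is defined through $\eps_{R_R}^{-1}$, and the identity $(\Delta\otimes_k\id_H)\circ\Delta=(\id_H\otimes_k\Delta)\circ\Delta$ is not a consequence of associativity or commutativity of $\mu_R$. What makes the pulled-back diagram in $\C$ commute is the naturality of the counit $\eps:\iota_R\circ()^{\C_R}\to\id_{\C_R}$, applied to the morphism $\id_R\otimes\delta$ with $\delta=\eps_{R_R}^{-1}\circ(u_R\otimes\id_R)$, together with the identification $\eps_{R_R}^{-1}\otimes_k\id_H=\eps_{(R\otimes_k H)_R}^{-1}$; this is exactly the content of the paper's proof, and without it your ``pull back along $\eps_{R_R}$'' strategy does not close up. Moreover, the fallback via Thm.~\ref{thm:pv-rings-equiv-to-fibre-functors} proves less than the lemma asserts: identifying $R$ with $R_{\omega_R}$ and transporting the Hopf structure of $k[G]$ only shows that $H$ admits \emph{some} Hopf-algebra structure. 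The lemma claims that the specific maps $c_H=(\mu_R)^\C$, $s=\tau^\C$ and $\Delta=\omega_R\bigl(\eps_{R_R}^{-1}\circ(u_R\otimes\id_R)\bigr)$ are the structure maps, and these formulas are what the subsequent results rely on (the coaction $\delta$ in Thm.~\ref{thm:Aut-R-represented-by-H}, the coactions $\rho_N$ in Thm.~\ref{thm:H-acting-on-omega_R}, the map $\psi$ in Thm.~\ref{thm:Aut-omega_R-represented-by-H}). So on the Tannakian route you would still have to verify that, under $\omega_R(R)\cong k[G]$, the comultiplication, counit and antipode of $k[G]$ are given by precisely these morphisms -- which is essentially the verification (including the antipode axiom you flag as an obstacle) that your proposal leaves undone.
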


\begin{rem}
The definition of $\Delta$ might look strange. Compared to other definitions (e.g.~in \cite[Sect.~2]{mt:haapvt}), where $\Delta$ is the map on constants/invariants induced by the map 
$R\otimes R\to R\otimes R\otimes R, a\otimes b\mapsto a\otimes 1\otimes b$, one might think that
$\Delta$ should be defined as $(\id_R\otimes u_R\otimes \id_R)^\C=\omega_R(u_R\otimes \id_R)$.
The reason for the difference is that in \cite{mt:haapvt} and others, one uses
$(R\otimes R)\otimes_R  (R\otimes R)\isom R\otimes R\otimes R$ with right-$R$-module structure on the left tensor factor $(R\otimes R)$ and left-$R$-module structure on the right tensor factor $(R\otimes R)$.\\
In our setting, however, we are always using left-$R$-modules. In particular, the natural isomorphism $\omega_R(R)\otimes_k \omega_R(R)\to \omega_R(R\otimes R)$ reads as
$$\Mor_{\C_R}(R,R\otimes R)\otimes_k \Mor_{\C_R}(R,R\otimes R)\to  \Mor_{\C_R}(R,R\otimes R\otimes R)$$
where the left hand side is isomorphic to $\Mor_{\C_R}(R,(R\otimes R)\otimes_R (R\otimes R))$. But here, this is the tensor product of left-$R$-modules.\\
The additional $\eps_{R_R}^{-1}$ in the definition of $\Delta$ solves the problem. It is also implicitly present in the identification
$H\otimes_k H\isom (R\otimes R\otimes R)^\C$ in \cite{mt:haapvt} (cf.~proof of Lemma 2.4(b) loc.~cit.).
\end{rem}

\begin{proof}[Proof of Lemma \ref{lemma:H-is-Hopf-algebra}]
As $R$ is an object of ${\rm Ind}(\tenscat{M})$, part (i) follows from Prop.~\ref{prop:subcat-of-trivial-modules}.
As $\omega_R$ is a tensor functor, it is clear that the structure of a commutative algebra of $R$ induces a structure of a commutative algebra on $\omega_R(R)=H$ via the maps $u_H$ and $\mu_H$ defined in the lemma. 
As in the proof of Prop.~\ref{prop:properties-of-quotients-of-U}, one verifies that $H=\omega_R(R)$ is finitely generated as $k$-algebra.\\
Part (iii) is obtained by checking that the necessary diagrams commute. 
We only show that $\Delta$ is coassociative, i.e.~that 
$(\Delta\otimes_k \id_H)\circ \Delta=(\id_H\otimes_k \Delta)\circ \Delta$, and leave the rest to the reader.

As $\Delta=\omega_R\bigl( \eps_{R_R}^{-1}\circ (u_R\otimes \id_R)\bigr)$,
$\Delta\otimes_k \id_H=\omega_R\bigl( (\eps_{R_R}^{-1}\otimes_k \id_H) \circ (u_R\otimes \id_R\otimes_k \id_H)\bigr)$ and
$\id_H\otimes_k \Delta=\omega_R( \id_R\otimes_k \Delta)$, it suffices to show that the morphisms
$$ (\eps_{R_R}^{-1}\otimes_k \id_H) \circ (u_R\otimes \id_R\otimes_k \id_H) \circ \eps_{R_R}^{-1}\circ (u_R\otimes \id_R) \quad \text{and}$$
$$( \id_R\otimes_k \Delta) \circ  \eps_{R_R}^{-1}\circ (u_R\otimes \id_R)$$
are equal. This is seen by showing that the following diagram commutes:
$$\xymatrix@C+25pt{
R \ar[r]^{u_R\otimes \id_R} \ar[d]_{u_R\otimes \id_R} &
R\otimes R \ar[r]^{\eps_{R_R}^{-1}} \ar[d]_(0.45){u_R\otimes \id_{R\otimes R}}   & 
R\otimes_k H \ar[d]^{u_R\otimes \id_{R\otimes_k H}} \\
R\otimes R \ar[r]^(0.45){\id_R\otimes u_R\otimes \id_R}\ar[d]_{\eps_{R_R}^{-1}} &
R\otimes R\otimes R \ar[r]^{\id_R\otimes \eps_{R_R}^{-1}} &
R\otimes R\otimes_k H \ar[d]^{\eps_{R_R}^{-1}\otimes_k \id_H} \\
R\otimes_k H \ar[rr]^{\id_R\otimes_k \Delta=\iota_R(\Delta) }&
&
R\otimes_k H\otimes_k H
}$$
Obviously the upper squares commute. Let $\delta:=\eps_{R_R}^{-1}\circ (u_R\otimes \id_R)$. Then the middle horizontal morphism equals $\id_R\otimes \delta$ and the lower horizontal morphism is $\iota_R(\Delta)=\iota_R((\id_R\otimes \delta)^{\C_R})$. As $\eps$ is a natural transformation 
$\iota_R\circ ()^{\C_R}\to \id_{\C_R}$, and as
$\eps_{R_R}^{-1}\otimes_k \id_H=\eps_{(R\otimes_k H)_R}^{-1}$, also the lower square commutes.

\end{proof}

\begin{thm}\label{thm:Aut-R-represented-by-H}
Let $R$ be a PV-ring for $M$. Then the group functor
$$\underline{\Aut}_{\C-\text{alg}}(R): \alg_k \to \groups$$
is represented by the Hopf-algebra $H=\omega_R(R)=(R\otimes R)^\C$.
Furthermore $\spec(\upsilon(R))$ is a torsor of $\underline{\Aut}_{\C-\text{alg}}(R)$ over $X$.
\end{thm}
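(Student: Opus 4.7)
The strategy is to produce, for each $k$-algebra $D$, a natural bijection $\spec(H)(D) \cong \underline{\Aut}_{\C-\text{alg}}(R)(D)$ respecting group structures. Since $R\otimes_k D = R\otimes \iota(D)$ is the coproduct of $R$ and $\iota(D)$ in commutative $\C$-algebras, an $\iota(D)$-algebra $\C$-endomorphism $\sigma$ of $R\otimes_k D$ is determined uniquely by the $\C$-algebra morphism $\sigma|_R := \sigma \circ (\id_R\otimes u_D):R\to R\otimes_k D$. Using the analogous coproduct property of $R\otimes R$, the pair $(\id_R\otimes u_D,\,\sigma|_R)$ yields a $\C$-algebra morphism $\varphi_\sigma:R\otimes R\to R\otimes_k D$ which is $R$-linear in the first factor, hence a $\C_R$-algebra morphism into $\iota_R(D)$, with $R\otimes R$ carrying its first-factor $R$-action.

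By Lemma~\ref{lemma:H-is-Hopf-algebra}, $\eps_{R_R}:\iota_R(H)=R\otimes_k H\to R\otimes R$ is an isomorphism of commutative $\C_R$-algebras. Composing $\varphi_\sigma$ with $\eps_{R_R}$ therefore yields a $\C_R$-algebra morphism $\iota_R(H)\to\iota_R(D)$. The adjunction $\iota_R\dashv ()^{\C_R}$ of Prop.~\ref{prop:adjointness}, combined with $\eta_D$ being an isomorphism (Prop.~\ref{prop:on-iota-r}(ii)), identifies the set of such morphisms with $\Hom_{k\text{-}\alg}(H,D)=\spec(H)(D)$; the algebra condition is preserved because $\iota_R$ is a tensor functor. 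Chaining these bijections defines a natural map $\Phi_D$ between the set of $\iota(D)$-algebra $\C$-endomorphisms of $R\otimes_k D$ and $\spec(H)(D)$; the parallel reverse construction, starting from $g\in\Hom_{k\text{-}\alg}(H,D)$ and extending the $\C$-algebra map $\tau_g := (\id_R\otimes g)\circ \eps_{R_R}^{-1}\circ (u_R\otimes \id_R)$ $\iota(D)$-linearly to all of $R\otimes_k D$, provides the inverse of $\Phi_D$.

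It remains to show that $\Phi_D$ is a group isomorphism and that every such endomorphism is in fact invertible. A direct diagram chase, using the explicit formula for $\Delta$ in Lemma~\ref{lemma:H-is-Hopf-algebra}(iii), shows that composition of $\iota(D)$-algebra endomorphisms translates under $\Phi_D$ into the convolution product on $\Hom_{k\text{-}\alg}(H,D)$. Since $H$ is a Hopf algebra, this product is a group, with inverse $g\mapsto g\circ s$ furnished by the antipode, so every endomorphism in the image is invertible, and $\Phi_D$ restricts to a group isomorphism $\underline{\Aut}_{\C-\text{alg}}(R)(D)\cong\spec(H)(D)$. For the torsor property, applying the exact tensor functor $\upsilon$ to $\eps_{R_R}$ gives an isomorphism $\upsilon(R)\otimes_k H\cong\upsilon(R)\otimes_{\O_\X}\upsilon(R)$ of $\upsilon(R)$-algebras, i.e.~$\spec(\upsilon(R))\times_{\spec(k)}\spec(H)\cong\spec(\upsilon(R))\times_\X\spec(\upsilon(R))$; combined with the faithful flatness of $\upsilon(R)$ over $\O_\X$ furnished by Cor.~\ref{cor:properties-of-simple-minimal-solution-rings}, this exhibits $\spec(\upsilon(R))$ as an $\underline{\Aut}_{\C-\text{alg}}(R)$-torsor over $\X$.

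The hardest step is expected to be the compatibility of $\Phi_D$ with composition: several different $R$-actions (first-factor versus second-factor on $R\otimes R$) and tensor-adjunction identifications must be tracked simultaneously, which is essentially the content of the subtle formula defining $\Delta$ in Lemma~\ref{lemma:H-is-Hopf-algebra}(iii). A secondary but unavoidable task is verifying carefully that $\eps_{R_R}$ is an isomorphism of $\C_R$-algebras and not merely of $\C_R$-modules, which reduces to unwinding the definitions $u_H=\omega_R(u_R)$ and $\mu_H=\omega_R(\mu_R)$.
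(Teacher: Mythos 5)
Your argument is correct and follows essentially the same route the paper intends: the paper's proof is only a sketch (referring to \cite{am:gticngg} and \cite{td:tipdgtfrz}), but its stated key ingredients are exactly yours, namely the coaction $\delta=\eps_{R_R}^{-1}\circ(u_R\otimes\id_R)$, the isomorphism $\eps_{R_R}$ (whose coassociativity diagram from Lemma~\ref{lemma:H-is-Hopf-algebra} gives the coaction property and hence the convolution/composition compatibility), and $\upsilon(\eps_{R_R}^{-1})$ together with faithful flatness for the torsor statement. Your write-up merely supplies the details the paper leaves to the cited references (the only slips are cosmetic, e.g.\ the adjunction for $\iota_R$ is the unlabelled proposition in Section~\ref{sec:c-algebras}, not Prop.~\ref{prop:adjointness}).
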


\begin{proof}
This is shown similar to \cite[Prop.10.9]{am:gticngg} or \cite{td:tipdgtfrz}.
One has to use that $$\delta: R\xrightarrow{u_R \otimes \id_R} R\otimes R  \xrightarrow{\eps_{R_R}^{-1}} R\otimes_k H$$
defines a right coaction of $H$ on $R$. The property of a right coaction, however, is given by the commutativity of the diagram in the proof of Lemma~\ref{lemma:H-is-Hopf-algebra}.

The torsor property is obtained by the isomorphism $\upsilon(\eps_{R_R}^{-1}):\upsilon(R)\otimes_{\O_\X} \upsilon(R)\to \upsilon(R)\otimes_k H$.
\end{proof}

\begin{thm}\label{thm:H-acting-on-omega_R}
Let $R$ be a PV-ring for $M$ and $H=\omega_R(R)$.
\begin{enumerate}
\item For all $N\in \tenscat{M}$,  $\rho_N:\omega_R(N)\to H\otimes_k \omega_R(N)$ given by
$$\rho_N:=\omega_R\bigl( \eps_{N_R}^{-1}\circ (u_R\otimes \id_{N}) \bigr)\ 
\footnote{The map $\eps_{N_R}^{-1}\circ (u_R\otimes \id_{N})$ is a morphism in $\C$: $N\to R\otimes N \to R\otimes_k \omega_R(N)$}$$
defines a left coaction of $H$ on $\omega_R(N)$.
\item The collection $\rho:=(\rho_N)_{N\in \tenscat{M}}$ is a natural transformation of tensor functors
$\omega_R\mapsto H\otimes_k \omega_R$, where $H\otimes_k \omega_R$ is a functor $\tenscat{M}\to \Mod_H$.
\end{enumerate}
\end{thm}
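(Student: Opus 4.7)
Let $V := \omega_R(N)$ and set $\delta_N := \eps_{N_R}^{-1} \circ (u_R \otimes \id_N) : N \to R \otimes \iota(V)$, so that $\rho_N = \omega_R(\delta_N)$. The key preliminary observation is the canonical identification $\omega_R(\iota(W)) \cong W$ valid for any $W \in \Vect_k$, which follows from $(R \otimes \iota(W))^\C \cong \Mor_\C(\1,R)\otimes_k W = R^\C \otimes_k W = W$ (using $R^\C=k$). Combined with the tensor-functor structure of $\omega_R$ extended to $\mathrm{Ind}(\tenscat{M})$, this yields the identification $\omega_R(R\otimes\iota(V)) \cong H\otimes_k V$ used implicitly in the statement; in particular $\Delta = \rho_R$ coincides with the comultiplication from Lemma~\ref{lemma:H-is-Hopf-algebra}.

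For part~(i), each coaction axiom will be reduced to a commuting diagram in $\C$ by applying the exact tensor functor $\omega_R$. For the counit axiom, the $R$-linearity of $\eps_{N_R}$ gives the identity $(\mu_R \otimes \id_{\iota(V)}) \circ (\id_R \otimes \eps_{N_R}^{-1}) = \eps_{N_R}^{-1} \circ (\mu_R \otimes \id_N)$ of morphisms in $\C$. Pre-composing both sides with $\id_R \otimes u_R \otimes \id_N$ and using $\mu_R \circ (\id_R \otimes u_R) = \id_R$ simplifies the right-hand side to $\eps_{N_R}^{-1}$, yielding the key equality $(\mu_R \otimes \id_{\iota(V)}) \circ (\id_R \otimes \delta_N) = \eps_{N_R}^{-1}$. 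Applying $()^\C$: the left-hand side becomes $(c_H \otimes \id_V) \circ \rho_N$ (since $c_H = (\mu_R)^\C$), while the right becomes $(\eps_{N_R}^{-1})^\C = \id_V$, using the triangle identity for $\iota_R \dashv ()^{\C_R}$ (which asserts that $(\eps_{N_R})^{\C_R}$ is inverse to $\eta_V$, the canonical identification $V \cong \iota_R(V)^{\C_R}$ from Prop.~\ref{prop:on-iota-r}(ii)). For coassociativity, I would write $(\Delta \otimes \id_V) \circ \rho_N$ and $(\id_H \otimes \rho_N) \circ \rho_N$ both as $\omega_R$ applied to morphisms $N \to R \otimes R \otimes \iota(V)$ (by reading off the tensor-functor identifications of $H \otimes H \otimes V$), then reduce their equality to a diagram chase combining naturality of $\eps^{-1}$ with the definition $\Delta = \omega_R(\delta_R)$.

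For part~(ii), naturality of $\rho$ in $N$ is immediate from naturality of $\eps^{-1}$: for $f \colon N \to N'$ in $\tenscat{M}$ one has the commuting square $(\id_R \otimes \iota(\omega_R(f))) \circ \eps_{N_R}^{-1} = \eps_{N'_R}^{-1} \circ (\id_R \otimes f)$, so pre-composition with $u_R \otimes \id$ and application of $\omega_R$ yield the naturality square for $\rho$. The tensor compatibility $\rho_{N \otimes N'} = \rho_N \otimes \rho_{N'}$ under $\omega_R(N \otimes N') \cong \omega_R(N) \otimes_k \omega_R(N')$ follows from the fact that $\iota_R \circ \omega_R$ and $\id_{\C_R}$ are both tensor functors and the counit $\eps$ of the tensor adjunction $\iota_R \dashv ()^{\C_R}$ is monoidal, so $\eps_{(N\otimes N')_R}$ factors through $\eps_{N_R} \otimes_R \eps_{N'_R}$ under the natural isomorphism $\iota_R(\omega_R(N) \otimes_k \omega_R(N')) \cong \iota_R(\omega_R(N)) \otimes_R \iota_R(\omega_R(N'))$.

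The main technical hurdle will be the careful bookkeeping of identifications: one must distinguish $(\eps_{N_R})^\C$ (an identity under the $\eta$-identification) from $\omega_R(\eps_{N_R}) = (\id_R \otimes \eps_{N_R})^\C$ (a nontrivial automorphism of $H \otimes V$ that essentially encodes the comodule structure), while simultaneously tracking the tensor-functor identifications $\omega_R(R \otimes X) \cong H \otimes \omega_R(X)$ consistently across the various instances of $X$ appearing in the diagrams.
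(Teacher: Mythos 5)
Your proposal is correct and follows essentially the same route as the paper: after the identifications $\omega_R(\iota(W))\cong W$ and $\omega_R(R\otimes X)\cong H\otimes_k \omega_R(X)$, both coaction axioms and the naturality/tensor compatibility in part (ii) are reduced to diagram chases in $\C$ resting on naturality of $\eps$ and its compatibility with $\otimes_k$ — exactly the mechanism of the coassociativity diagram for $\Delta$ in Lemma~\ref{lemma:H-is-Hopf-algebra} to which the paper's proof defers — and you in addition spell out the counit axiom (via $R$-linearity of $\eps_{N_R}$ and the triangle identity), which the paper leaves to the reader. One cosmetic slip: in the coassociativity step the two composites to be compared via $\omega_R$ are morphisms $N\to R\otimes_k H\otimes_k \omega_R(N)$, the object $R\otimes R\otimes \iota(\omega_R(N))$ appearing only as an intermediate stage of the diagram; this does not affect the argument.
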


\begin{rem}
By going to the inductive limit one also gets a map $\rho_R:\omega_R(R)\to H\otimes_k \omega_R(R)$.
This map is nothing else then the comultiplication $\Delta:H\to H\otimes_k H$.
\end{rem}

\begin{proof}[Proof of Thm.~\ref{thm:H-acting-on-omega_R}]
Part (i) is proven in the same manner as the coassociativity of $\Delta$. For proving the second part, recall that $\eps$ is a natural transformation. Hence, for every morphism $f:N\to N'$ the diagram
$$\xymatrix@C+10pt{
N \ar[r]^(0.4){u_R \otimes \id_N} \ar[d]_{f} &
R\otimes N \ar[r]^(0.4){\eps_{N_R}^{-1}}  \ar[d]_{\id_R\otimes f}& 
R\otimes_k \omega_R(N) \ar[d]^{\iota_R((\id_R\otimes f)^\C)} \\
N' \ar[r]^(0.45){u_R \otimes \id_{N'}} & R\otimes N' \ar[r]^(0.4){\eps_{N'_R}^{-1}} & R\otimes_k \omega_R(N')
}$$
commutes. As $\iota_R((\id_R\otimes f)^\C)=\id_R\otimes_k \omega_R(f)$, applying $\omega_R$ to the diagram gives the desired commutative diagram for $\rho$ being a natural transformation. Compatibility with the tensor product is seen in a similar way.
\end{proof}

\begin{thm}\label{thm:Aut-omega_R-represented-by-H}
Let $R$ be a PV-ring for $M$ and $H=\omega_R(R)$. Then the group functor $$\underline{\Aut}^\otimes(\omega_R):\alg_k \to \groups$$
is represented by the Hopf-algebra $H$.\footnote{As shown in the following proof, the representing Hopf-algebra naturally is the coopposite Hopf-algebra $H^{\rm cop}$ of $H$. However, the antipode $s$ is an isomorphism of Hopf-algebras $s:H\to H^{\rm cop}$, hence $H^{\rm cop}\isom H$.}
\end{thm}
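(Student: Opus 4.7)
The plan is to combine the Tannakian description of the neutral Tannakian category $(\tenscat{M},\omega_R)$ with the identification of $R$ as the Picard-Vessiot ring associated to $\omega_R$. By Prop.~\ref{prop:fibre-functor-associated-to-pv-ring}, $\omega_R$ is a fibre functor on $\tenscat{M}$ with values in $\vect_k$, so $(\tenscat{M},\omega_R)$ is a neutral Tannakian category. Deligne's main theorem in \cite{pd:ct} then gives that $G:=\underline{\Aut}^\otimes(\omega_R)$ is an affine group scheme over $k$, represented by a Hopf algebra $k[G]$, and that $\omega_R$ induces an equivalence of tensor categories $\tenscat{M} \simeq \Rep_k(G)$ which extends to an equivalence of the corresponding ind-categories. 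Moreover $k[G]$ is finitely generated, because $\tenscat{M}$ is generated as a tensor category by the dualizable object $M$, so $G$ is an algebraic group scheme over $k$.

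The first main step is to identify $k[G] \cong H$ as $k$-algebras. By Thm.~\ref{thm:pv-rings-equiv-to-fibre-functors} applied to the fibre functor $\omega_R$, the Picard-Vessiot ring $R$ is isomorphic as a $\C$-algebra to $R_{\omega_R}$. As recorded before Prop.~\ref{prop:pv-ring-associated-to-fibre-functor}, under the extended equivalence $\omega_R : {\rm Ind}(\tenscat{M}) \to {\rm Ind}(\Rep_k(G))$ the ind-object $R_{\omega_R}$ corresponds to the group ring $k[G]$ equipped with the right regular representation (this is exactly the construction that was imitated when defining $R_\omega$). Hence $H=\omega_R(R) \cong \omega_R(R_{\omega_R}) = k[G]$ as $k$-algebras, the algebra structure on each side being the image under $\omega_R$ of the commutative algebra structure of the corresponding object.

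The second step, which I expect to be the main obstacle, is to check that the Hopf algebra structure on $H$ defined in Lem.~\ref{lemma:H-is-Hopf-algebra} matches the standard Hopf algebra structure on $k[G]$ under this identification. For the comultiplication: the map defining $\Delta$ is $R \xrightarrow{u_R\otimes\id_R} R\otimes R \xrightarrow{\eps_{R_R}^{-1}} R\otimes_k H$, and by Thm.~\ref{thm:H-acting-on-omega_R} (passing to the ind-limit, as indicated in the remark following it) this is precisely the coaction $\rho_R$. Under the Tannakian equivalence the coaction of $H\cong k[G]$ on $k[G]$ coming from the right regular representation is, by definition, the comultiplication of $k[G]$; so $\Delta$ corresponds to the standard comultiplication (modulo the left/right coaction convention discussed in the Remark after Lem.~\ref{lemma:H-is-Hopf-algebra}, which accounts for the $H^{\rm cop}$ footnote in the theorem). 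Similarly, the counit $c_H=(\mu_R)^\C$ corresponds under $\omega_R$ to the map induced by the $k$-algebra multiplication on $k[G]$, which is the standard counit, and the antipode $s=\tau^\C$ corresponds to inversion via the twist of tensor factors. The care here is exactly the left/right subtlety noted in the text: one has to check that the identifications $\omega_R(R)\otimes_k\omega_R(R)\cong \omega_R(R\otimes R)$ used to pull the structure back agree with those arising from the tensor structure on $\Rep_k(G)$.

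The final step is then immediate: $\underline{\Aut}^\otimes(\omega_R)=G = \spec(k[G]) \cong \spec(H)$, giving the representability assertion. The functor isomorphism is natural in the test $k$-algebra $D$ by construction.
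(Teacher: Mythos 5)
Your proposal is essentially correct but takes a genuinely different route from the paper. The paper's proof is direct and self-contained at this point: using the left coaction $\rho=(\rho_N)$ of $H$ on $\omega_R$ from Thm.~\ref{thm:H-acting-on-omega_R}, it writes down an explicit homomorphism of group functors $\varphi:\spec(H)\to \underline{\Aut}^\otimes(\omega_R)$ (the coaction composed with the antipode) together with an explicit inverse $\psi$ built from $s$, $c_H$ and $g_R$, so no reconstruction theorem is needed and the $H^{\rm cop}$ statement of the footnote falls out of the formulas. You instead invoke Deligne--Milne reconstruction for the neutral Tannakian category $(\tenscat{M},\omega_R)$ and identify $H=\omega_R(R)\cong\omega_R(R_{\omega_R})=k[G]$ via Thm.~\ref{thm:pv-rings-equiv-to-fibre-functors} and the fact, already used in Prop.~\ref{prop:pv-ring-associated-to-fibre-functor}, that $R_\omega$ corresponds to $k[G]$ with the regular representation; this yields representability of the functor by the algebra $H$ almost immediately (and there is no circularity, since neither Thm.~\ref{thm:pv-rings-equiv-to-fibre-functors} nor Prop.~\ref{prop:pv-ring-associated-to-fibre-functor} depends on the present theorem), which is a real economy. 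The price is concentrated in your second step: to know that the Hopf structure of Lemma~\ref{lemma:H-is-Hopf-algebra} is (up to co-opposite) the one induced by the group structure of $\underline{\Aut}^\otimes(\omega_R)$, you must check that the comodule structure which the equivalence $\tenscat{M}\simeq\Rep_k(G)$ places on $\omega_R(N)$ agrees, under your isomorphism $H\cong k[G]$, with the explicitly defined $\rho_N$ -- equivalently that this isomorphism intertwines $\Delta=\rho_R$ with $\Delta_{k[G]}$; saying that the regular coaction is ``by definition'' comultiplication only handles the $k[G]$ side. That verification amounts to unwinding how the isomorphism $R_{\omega_R}\cong R$ in the proof of Thm.~\ref{thm:pv-rings-equiv-to-fibre-functors} is assembled from the $\eps_{N_R}$, i.e.\ to bookkeeping of exactly the kind the paper performs explicitly with $\varphi$ and $\psi$; it does go through and also recovers the left/right subtlety you flag, but in your write-up it is asserted rather than proved, so it should be carried out if this route is used to replace the paper's argument.
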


\begin{proof}
As $\rho:=(\rho_N)_{N\in \tenscat{M}}$  defines a left coaction of $H$ on the functor $\omega_R$ by natural transformations, one obtains a right action of $\spec(H)$ on $\omega_R$. Composing with the antipode (i.e. taking inverse group elements), one therefore gets a homomorphism of group functors
$$\varphi:\spec(H)\to \underline{\Aut}^\otimes(\omega_R).$$
Explicitly, for any $k$-algebra $D$ and $h\in H(D)=\Hom_{k-\rm{alg}}(H,D)$, one defines $\varphi(h)\in \underline{\Aut}^\otimes(\omega_R)(D)=\Aut^\otimes(D\otimes_k \omega_R)$ as the natural transformation which for $N\in\tenscat{M}$ is the $D$-linear extension of the composition 
$$\omega_R(N)\xrightarrow{\rho_N} H\otimes_k \omega_R(N)
\xrightarrow{s\otimes \id_{\omega_R(N)}} H\otimes_k \omega_R(N)
\xrightarrow{h\otimes \id_{\omega_R(N)}}D\otimes_k \omega_R(N).$$

For showing that the homomorphism $\varphi$ is indeed an isomorphism, we give the inverse map:\\
For any $k$-algebra $D$ and $g\in \underline{\Aut}^\otimes(\omega_R)(D)$, one has the homomorphism $g_R\in \End_D(D\otimes_k \omega_R(R))=\End_D(D\otimes_k H)$, and one defines $\psi(g)\in H(D)$ as the composition
$$H\xrightarrow{s} H\xrightarrow{u_D\otimes \id_H} D\otimes_k H \xrightarrow{g_R}D\otimes_k H  \xrightarrow{\id_D\otimes c_H}D.$$

It is a straight forward calculation to check that $\psi(g)$ is indeed a homomorphism of $k$-algebras and that $\varphi$ and $\psi$ are inverse to each other. 
\end{proof}

\begin{cor}\label{cor:auts-are-isomorphic}
The affine group schemes $\underline{\Aut}_{\C\text{-}\alg}(R)$ and $\underline{\Aut}^\otimes(\omega_R)$ are isomorphic.
\end{cor}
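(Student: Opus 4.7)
The corollary is an almost immediate consequence of the two preceding representability theorems, so the plan is essentially to assemble them correctly and make the natural identification transparent. Both Thm.~\ref{thm:Aut-R-represented-by-H} and Thm.~\ref{thm:Aut-omega_R-represented-by-H} exhibit the same commutative Hopf-algebra $H = \omega_R(R) = (R\otimes R)^{\C}$ as a representing object, so by the (anti)equivalence between commutative Hopf-algebras and affine group schemes, the two group functors must be isomorphic. Concretely, one has $\spec(H) \isom \underline{\Aut}_{\C\text{-}\alg}(R)$ on the one hand and $\spec(H) \isom \underline{\Aut}^\otimes(\omega_R)$ on the other, and one defines the desired isomorphism as the composition of the first with the inverse of the second.

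To make this more than a formal quote of Yoneda, I would briefly spell out how a point $g \in \underline{\Aut}_{\C\text{-}\alg}(R)(D)$ is transported to a tensor automorphism of $D\otimes_k \omega_R$: namely, for $N\in\tenscat{M}$ apply the functor $\omega_R$ to the induced automorphism of $R\otimes_k D$-algebras and use the natural identification $(R\otimes N)^\C = \omega_R(N)$ to obtain a $D$-linear automorphism of $D\otimes_k\omega_R(N)$. One then checks (or quotes from the constructions in the proofs of Thms.~\ref{thm:Aut-R-represented-by-H} and \ref{thm:Aut-omega_R-represented-by-H}) that under the identifications $\underline{\Aut}_{\C\text{-}\alg}(R) \cong \spec(H) \cong \underline{\Aut}^\otimes(\omega_R)$ coming from $H$-comodule structures $\delta:R\to R\otimes_k H$ and $\rho_N:\omega_R(N)\to H\otimes_k \omega_R(N)$ respectively, this assignment corresponds to the identity on $\spec(H)$.

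The only real verification is that the two $H$-coactions are compatible, i.e.\ that $\rho_N$ is the ``shadow'' on constants of $\delta$ tensored with $N$. This is clear from the definitions: both $\delta$ and $\rho_N$ arise as $\omega_R$ (respectively $()^\C$) applied to $\eps_{(-)_R}^{-1}\circ(u_R\otimes\id_{(-)})$, so naturality of $\eps$ forces them to be intertwined in the sense required. There is no genuine obstacle, and certainly nothing to iterate or estimate; the whole content lies in the preceding two theorems, and this corollary simply packages their common representing object as an isomorphism of group functors, automatically an isomorphism of affine group schemes of finite type over $k$.
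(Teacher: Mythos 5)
Your proposal is correct and follows essentially the same route as the paper: Cor.~\ref{cor:auts-are-isomorphic} is deduced directly from Thm.~\ref{thm:Aut-R-represented-by-H} and Thm.~\ref{thm:Aut-omega_R-represented-by-H}, since both group functors are represented by the same Hopf-algebra $H=\omega_R(R)=(R\otimes R)^\C$. The extra compatibility checks you sketch (transporting points and matching the coactions $\delta$ and $\rho_N$) are sound but already contained in the proofs of those two theorems, so the paper simply cites them.
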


\begin{proof}
By Thm.~\ref{thm:Aut-R-represented-by-H} and Thm.~\ref{thm:Aut-omega_R-represented-by-H} both functors are represented by the Hopf-algebra $H=\omega_R(R)$.
\end{proof}

%

\section{Galois correspondence}\label{sec:galois-correspondence}

In this section we will establish a Galois correspondence between subalgebras of a PV-ring and closed subgroups of the corresponding Galois group. As in \cite{am:pvtdsr}, the Galois correspondence will only take into account subalgebras which are PV-rings themselves on the one hand, and
normal subgroups on the other.

\medskip

We start by recalling facts about sub-Hopf-algebras and closed subgroup schemes which can be found in \cite{ww:iags}.

In the equivalence of affine group schemes and Hopf-algebras, closed subgroup schemes correspond to Hopf-ideals, and closed normal subgroup schemes correspond to so called normal Hopf-ideals.
As there is a correspondence between closed normal subgroup schemes and factor group schemes of $\G$ by taking the cokernel and the kernel, respectively, there is also a correspondence between normal Hopf-ideals and sub-Hopf-algebras (\cite[Thm.~4.3]{mt:cbhisha}). This correspondence is given by
$$I\mapsto H(I):=\Ker\left(H\xrightarrow{\Delta - \id_H\otimes u_H}H\otimes_k H\to H\otimes_k (H/I)\right),$$
for a normal Hopf-ideal $I$, and  by
$$H'\mapsto (H')^+H,$$
for a sub-Hopf-algebra $H'$, where $(H')^+$ is defined to be the kernel of the counit $c_{H'}:H'\to k$.

Furthermore, for a sub-Hopf-algebra $H'\subseteq H$, the category $\Comod(H')$ embeds into $\Comod(H)$ as a full subcategory.


\begin{thm}\label{thm:galois-correspondence}
Let $M\in \C$ be dualizable, $R$ a PV-ring for $M$ (assuming it exists), $\omega=\omega_R$ the corresponding fibre functor, $H=\omega_R(R)$, and
$\G=\spec(H)=\underline{\Aut}_{\C\text{-}\alg}(R)=\underline{\Aut}^\otimes(\omega)$ the corresponding Galois group. 
Then there is a bijection between
$$\fT:=\{ T \in \C\text{-}\alg \mid T\subseteq R \text{ is PV-ring for some }N\in \tenscat{M} \}$$
and
$$\fN:=\{ \N \mid \N\leq \G \text{ closed normal subgroup scheme of }\G \}$$
given by
$\Psi:\fT\to \fN, T\mapsto \underline{\Aut}_{\C_T\text{-}\alg}(R)$ resp.~$\Phi:\fN\to \fT, \N\mapsto R^{\N}$.
\end{thm}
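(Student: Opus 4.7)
The strategy is to reduce the correspondence to Takeuchi's bijection (recalled at the start of this section) between closed normal subgroup schemes of $\G$ and sub-Hopf-algebras of $H$, applied to the Hopf-algebra $H=\omega_R(R)=(R\otimes R)^\C$ of Lemma~\ref{lemma:H-is-Hopf-algebra}.

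For $\Psi$: given $T\in\fT$, say $T$ is a PV-ring for some $N\in \tenscat{M}$. Then $T$ is itself a simple $\C$-algebra with $T^\C=k$, so by Lemma~\ref{lemma:H-is-Hopf-algebra} applied to $T$, $H_T:=(T\otimes T)^\C$ is a finitely generated commutative Hopf-algebra over $k$. The inclusion $T\hookrightarrow R$ induces a $k$-algebra map $H_T\to H$ which one checks is an injective morphism of Hopf-algebras, the Hopf structure matching up via the explicit formulas of Lemma~\ref{lemma:H-is-Hopf-algebra}, and injectivity following from the faithful flatness in Thm.~\ref{thm:Aut-R-represented-by-H} applied to $T$. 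By Takeuchi, $H_T\subseteq H$ corresponds to a closed normal subgroup scheme $\N_T\leq \G$. A direct computation using the torsor description then identifies $\N_T$ with $\underline{\Aut}_{\C_T\text{-}\alg}(R)=\Psi(T)$, as both are the functor sending $D\in\alg_k$ to the subgroup of those $g\in\G(D)$ that restrict to the identity on $T\otimes_k D$.

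For $\Phi$: given a closed normal subgroup scheme $\N\leq \G$ with associated sub-Hopf-algebra $H_\N\subseteq H$, the algebra $H_\N$ is automatically finitely generated over $k$ (a standard property of sub-Hopf-algebras of commutative finitely generated Hopf-algebras over a field, cf.~\cite{ww:iags}). Via the Tannakian equivalence induced by $\omega$, the inclusion $\Comod(H_\N)\hookrightarrow\Comod(H)$ corresponds to a strictly full tensor subcategory of $\mathrm{Ind}(\tenscat{M})$, which by finite generation of $H_\N$ equals $\mathrm{Ind}(\tenscat{N_0})$ for some dualizable $N_0\in\tenscat{M}$. I then identify $\Phi(\N)=R^\N$ with the PV-ring inside $R$ for $N_0$ produced by applying Prop.~\ref{prop:pv-ring-associated-to-fibre-functor} to $\omega|_{\tenscat{N_0}}$: concretely, $R^\N$ is the subobject of $R$ on which the right coaction $\delta\colon R\to R\otimes_k H$ lands in $R\otimes_k H_\N$, and this subobject is an ind-object of $\tenscat{N_0}$ satisfying the hypotheses of Prop.~\ref{prop:unique-pv-inside-simple-sol-ring} (a simple solution ring for $N_0$ with $(R^\N)^\C=k$).

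Finally, $\Psi\circ\Phi=\id$ and $\Phi\circ\Psi=\id$ because both maps factor through the Takeuchi correspondence: $T\mapsto H_T\mapsto \N_T$ for $\Psi$ and $\N\mapsto H_\N\mapsto R^\N$ for $\Phi$. The remaining checks are $H_{R^\N}=H_\N$ and $R^{\N_T}=T$. The harder one, which I expect to be the main obstacle, is $R^{\N_T}=T$: that every element of $R$ fixed by all automorphisms restricting to the identity on $T$ (over every $D\in\alg_k$) already lies in $T$. The plan is to translate such invariance into the coaction condition $\delta(r)\in R\otimes_k H_T$, and then to use the faithful flatness of $\upsilon(R)$ over $\upsilon(T)$ (from Thm.~\ref{thm:Aut-R-represented-by-H} applied to $T$) together with the isomorphism $\eps_{R_T}$ to descend this Hopf-theoretic invariance back to ring-level membership in $T$.
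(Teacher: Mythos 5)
Your overall route is the same as the paper's: both proofs reduce the theorem to Takeuchi's bijection between sub-Hopf-algebras of $H$ and normal Hopf-ideals (equivalently, closed normal subgroup schemes of $\G$), and both use the equivalence $\omega_R:\tenscat{M}\to\comod(H)$ (and of ind-categories) to pass between $\C$-subalgebras of $R$ and sub-Hopf-algebras of $H$; your $H_T=(T\otimes T)^\C$ is the paper's $\omega_T(T)=\omega_R(T)$ (using that $\omega_T$ is the restriction of $\omega_R$ to $\tenscat{N}$), and the identifications of the two maps with $\underline{\Aut}_{\C_T\text{-}\alg}(R)$ and with $R^{\N}$ are exactly the paper's. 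The genuine difference is how the inverse identities are verified, and there your proposal has a gap. The step you single out as the main obstacle, $R^{\N_T}=T$, is in the paper essentially formal: since $\omega_R$ is an equivalence, the identity may be checked after applying $\omega_R$, where it becomes literally the definition of $H(I)$ combined with Takeuchi's identity $H\bigl(H_T^{+}H\bigr)=H_T$. Your substitute argument invokes ``faithful flatness of $\upsilon(R)$ over $\upsilon(T)$, from Thm.~\ref{thm:Aut-R-represented-by-H} applied to $T$'', but that theorem applied to $T$ only gives representability of $\underline{\Aut}_{\C\text{-}\alg}(T)$ and the torsor property of $\spec(\upsilon(T))$ over $\X$; it says nothing about $\upsilon(R)$ as a $\upsilon(T)$-module, and faithful flatness of $R$ over a sub-PV-ring is nowhere established in the paper (it is itself a consequence of Takeuchi-type faithful flatness of $H$ over $H_T$). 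As written, the descent step is therefore unjustified. The same misattribution affects your injectivity of $H_T\to H$, which should instead come from flatness of $\upsilon(T)$ and $\upsilon(R)$ over $\O_\X$ together with left-exactness of $(-)^\C$.

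The gap is repairable with tools already at hand. Invariance under $\N_T$ forces, by a standard comodule computation (coassociativity of $\delta$ plus $H(H_T^{+}H)=H_T$), that the coaction $\delta$ restricted to $R^{\N_T}$ factors through $R\otimes_k H_T$. By naturality of $\eps$ applied to the monomorphism $T\to R$ (and $\eps_{T_R}$ being an isomorphism, Prop.~\ref{prop:subcat-of-trivial-modules}), the subobject $\eps_{R_R}(R\otimes_k H_T)$ of $R\otimes R$ is exactly $R\otimes T$; hence $u_R\otimes\id_R$ maps $R^{\N_T}$ into $R\otimes T$. Finally, membership in $T$ follows because $\O_\X\to\upsilon(R)$ is faithfully flat (Cor.~\ref{cor:properties-of-simple-minimal-solution-rings}), hence pure, so $\upsilon(R/T)\to\upsilon(R)\otimes_{\O_\X}\upsilon(R/T)$ is injective and the composite $R^{\N_T}\to R\to R/T$ vanishes. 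Alternatively, and more in the spirit of the paper, simply apply $\omega_R$ and reduce to the Hopf-algebraic definition of $H(I)$. Your remaining points are fine: finite generation of the sub-Hopf-algebra $H_{\N}$ (quotient group schemes of algebraic group schemes are algebraic) is the right justification for a single dualizable generator $N_0$, matching the paper's terse claim that $\comod(H')$ is generated by one object, and the verification $H_{R^{\N}}=H_{\N}$ is the paper's statement $\omega_R(T)=H'$ obtained from Thm.~\ref{thm:pv-rings-equiv-to-fibre-functors}.
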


Here, the ring of invariants $R^\N$ is the largest subobject $T$ of $R$ such that for all $k$-algebras $D$ and all $\sigma\in \N(D)\subset \Aut_{\C_{\iota(D)}}(R\otimes_k D)$, one has $\sigma|_{T\otimes_k D}=\id_{T\otimes_k D}$. Equivalently, $R^\N$ is the equalizer of the 
morphisms $\id_R\otimes u_{k[\N]}:R\to R\otimes_k k[\N]$~\footnote{$k[\N]:=\O_\N(\N)$ denotes the ring of regular functions on the affine scheme $\N$.} and
$R\xrightarrow{\delta} R\otimes_k H\twoheadrightarrow R\otimes_k  k[\N]$, where $\delta=\eps_{R_R}^{-1}\circ (u_R\otimes \id_R)$ is the comodule map of $R$ as $H$-comodule, and $H\twoheadrightarrow k[\N]$ is the canonical epimorphism.

%

\begin{proof}[Proof of Thm.~\ref{thm:galois-correspondence}]
The functor $\omega_R$ is an equivalence of categories
$$\omega_R:\tenscat{M}\to \comod(H),$$
and also of their ind-categories.\footnote{Here, $\comod(H)$ denotes the category of left-$H$-comodules which are finite-dimensional as $k$-vector spaces.}
Hence, it provides a bijection between subalgebras of $R$ in $\C$ and subalgebras of $H$ stable under the left comodule structure.\\
We will show that under this bijection sub-PV-rings correspond to sub-Hopf-algebras and that this bijection can also be described as given in the theorem.

First, let $T\subseteq R$ be a PV-ring  for some $N\in \tenscat{M}$. Then $\tenscat{N}$ is a full subcategory of $\tenscat{M}$, and the fibre functor $\omega_T:\tenscat{N}\to \vect_k$ corresponding to $T$ is nothing else than the restriction of $\omega_R$ to the subcategory $\tenscat{N}$, as $T$ is a subobject of $R$.
Hence, $H':=\omega_R(T)=\omega_T(T)$ is a sub-Hopf-algebra of $H$.
Therefore, we obtain a closed normal subgroup scheme of $\G=\spec(H)$ as the kernel
of $\spec(H)\twoheadrightarrow \spec(H')$.
As $\spec(H)=\underline{\Aut}_{\C\text{-}\alg}(R)$ and $\spec(H')=\underline{\Aut}_{\C\text{-}\alg}(T)$, this kernel is exactly
$\underline{\Aut}_{\C_T\text{-}\alg}(R)$.

On the other hand, let $\N$ be a closed normal subgroup scheme of $\G=\spec(H)$ defined by a normal Hopf-ideal $I$ of $H$, and
$$H'=\Ker\left(H\xrightarrow{\Delta - \id_H\otimes u_H}H\otimes_k H\twoheadrightarrow H\otimes_k (H/I)\right)$$ 
the corresponding sub-Hopf-algebra of $H$.\\
The subcategory $\comod(H')$  is generated by one object $V$ (as every category of finite comodules is), and the object $N\in \tenscat{M}$ corresponding to $V$ via $\omega_R$, has a PV-ring $T$ inside $R$ by Thm.~\ref{thm:existence-of-pv-ring}, since $R$ is a simple solution ring for $N$ with $R^\C=k$. Furthermore, since $T$ is the PV-ring corresponding to the fibre functor $\omega_R:\tenscat{N}\to \comod(H')$, we have $\omega_R(T)=H'$.

It remains to show that $T=R^\N$, i.e. that
$$T=\Ker\left( R\xrightarrow{\delta\, -\,\, \id_R\otimes_k u_H}R\otimes_k H\twoheadrightarrow R\otimes_k k[\N]=R\otimes_k (H/I)\right).$$
As $\omega_R$ is an equivalence of categories, this is equivalent to
$$\omega_R(T)=\Ker\left( \omega_R(R)\xrightarrow{\omega_R(\delta) \, -\,\, \omega_R(\id_R)\otimes_k u_H}\omega_R(R)\otimes_k H\twoheadrightarrow \omega_R(R)\otimes_k (H/I)\right).$$
But, as $\omega_R(T)=H'$, $\omega_R(R)=H$ and $\omega_R(\delta)=\Delta$, this is just the definition of $H'$.
\end{proof}

\bibliographystyle{plain}
\def\cprime{$'$} 


\vspace*{.5cm}

\parindent0cm

\end{document}